\DeclareMathOperator*{\esssup}{ess\,sup}
\newtheorem{theorem}{Theorem}[section]
\newtheorem{lemma}{Lemma}[section]
\newtheorem{corollary}{Corollary}[section]
\numberwithin{equation}{section}
\def\II{{\mathbb I}}
\def\ZZ{{\mathbb Z}}
\def\NN{{\mathbb N}}
\def\RR{{\mathbb R}}
\def\IId{{\mathbb I}^d}
\def\NNd{{\mathbb N}^d}
\def\RRd{{\mathbb R}^d}
\def\NNn{{\mathbb N}_{-1}}
\def\NNdn{{\mathbb N}^d_{-1}}
\def\Ha{H^\alpha_\infty(\IId)}
\def\Uas{\mathring{U}^{\alpha}_\infty}
\def\supp{\operatorname{supp}}
\def\VCdim{\operatorname{VCdim }}
\theoremstyle{definition}
\newtheorem{definition}[theorem]{Definition}
\theoremstyle{remark}
\newtheorem{remark}[theorem]{Remark}
\newcommand{\cO}{\mathcal{O}}
\newcommand{\bI}{\mathbf{I}}
\newcommand{\dd}{\mathrm{d}}
\newcommand{\Chi}{\raise .3ex
	\hbox{\large $\chi$}}
\newcommand{\R}{\mathbb{R}}
\newcommand{\N}{\mathbb{N}}
\algrenewcommand\algorithmicrequire{\makebox[46pt][l]{\textrm{required:}}}
\algrenewcommand\algorithmicensure{\makebox[46pt][l]{\textrm{output:}}}
\algrenewcommand\algorithmicfunction{\textrm{function}}
\algrenewcommand\algorithmicwhile{\textrm{while}}
\algrenewcommand\algorithmicdo{}
\algrenewcommand\algorithmicend{\textrm{end}}
\algrenewcommand\algorithmicforall{\textrm{for all}}
\algrenewcommand\algorithmicfor{\textrm{for}}
\algrenewcommand\algorithmicrepeat{\textrm{repeat}}
\algrenewcommand\algorithmicuntil{\textrm{until}}
\algrenewcommand\algorithmicif{\textrm{if}}
\algrenewcommand\algorithmicthen{\textrm{then}}
\algrenewcommand\algorithmicelse{\textrm{else}}
\newcommand{\bb}{{\boldsymbol{b}}}
\newcommand{\bee}{{\boldsymbol{e}}}
\newcommand{\bk}{{\boldsymbol{k}}}
\newcommand{\bh}{{\boldsymbol{h}}}
\newcommand{\bs}{{\boldsymbol{s}}}
\newcommand{\bx}{{\boldsymbol{x}}}
\newcommand{\by}{{\boldsymbol{y}}}
\newcommand{\bz}{{\boldsymbol{z}}}
\newcommand{\bW}{{\boldsymbol{W}}}
\newcommand{\bN}{{\boldsymbol{N}}}
\newcommand{\bxi}{{\boldsymbol{\xi}}}
\newcommand{\bnu}{{\boldsymbol{\nu}}}
\newcommand{\bbeta}{{\boldsymbol{\beta}}}
\newcommand{\balpha}{{\boldsymbol{\alpha}}}
\newcommand{\boned}{\boldsymbol{1}}
\newcommand{\be}{\begin{equation}}
\newcommand{\ee}{\end{equation}}
\newcommand{\beq}{\begin{eqnarray}}
\newcommand{\beqq}{\begin{eqnarray*}}
\newcommand{\eeq}{\end{eqnarray}}
\newcommand{\eeqq}{\end{eqnarray*}}
\title{Deep ReLU  neural networks  in high-dimensional approximation}
\author[a]{Dinh D\~ung
	\footnote{Corresponding author: Information Technology Institute, Vietnam National University, Hanoi,
		144 Xuan Thuy, Cau Giay, Hanoi, Vietnam
		\protect\\
		{\it Email address}: dinhzung@gmail.com}
	}
\affil[a]{Information Technology Institute, Vietnam National University, Hanoi
	\protect\\
	144 Xuan Thuy, Cau Giay, Hanoi, Vietnam
\protect\\
Email: dinhzung@gmail.com}
\author[b]{Van Kien Nguyen}
\affil[b]{Faculty of Basic Sciences, University of Transport and Communications
\protect\\	No.3 Cau Giay Street, Lang Thuong Ward, Dong Da District,
	Hanoi, Vietnam
\protect\\
Email: kiennv@utc.edu.vn}
\date{\today}
\begin{document}
\maketitle

\begin{abstract}

 We study  the  computation complexity of  deep  ReLU (Rectified Linear Unit) neural networks  for  the approximation of functions from the H\"older-Zygmund space of mixed smoothness defined on the $d$-dimensional unit cube when the dimension $d$ may be very large.		
The approximation error  is measured in the norm of  isotropic Sobolev space.  For  every function $f$ from the H\"older-Zygmund space of mixed smoothness, we explicitly construct a deep ReLU neural network having an output that  approximates $f$  with a prescribed accuracy $\varepsilon$, and prove tight dimension-dependent  upper and lower bounds of the computation complexity of this approximation, characterized as  the  size and the depth of  this deep ReLU neural network, explicitly in $d$ and $\varepsilon$.  
 The proof of these results are in particular, relied on the approximation by sparse-grid sampling recovery based on  the  Faber series.

\medskip
\noindent
{\bf Keywords:}  Deep ReLU neural network; Computation complexity;  High-dimensional approximation;  
 Sparse-grid sampling; Continuous piece-wise linear functions.

  
\end{abstract}

\section{Introduction}
	Neural networks have been studied and used for  almost  80 years, dating back to the foundational works  of McCulloch and  Pitts \cite{MP43},  Hebb \cite{Heb49B} and of Rosenblatt \cite{Ros58}. In recent years, deep neural networks have been successfully applied to a striking variety of Machine Learning problems, including computer vision \cite{KSH12}, natural language processing \cite{WSC.16}, speech recognition and image classification \cite{LBH15}.  The main advantage of deep neural networks over shallow ones is that they can output compositions of functions cheaply. 
	Since their application range is getting wider, theoretical analysis to reveal the reason why deep neural networks could lead to significant practical improvements attracts substantial attention \cite{ ABMM17,DDF.19,MPCB14,Te15,Te16}. 
	In the last several years, there has been a number of interesting papers that address the role of depth and architecture of deep neural networks in approximating sets of functions  which have a very special  regularity properties such as analytic functions \cite{EWa18,Mha96}, differentiable functions \cite{PeVo18,Ya17a}, oscillatory functions \cite{GPEB19}, functions in isotropic Sobolev or Besov spaces \cite{AlNo20,DDF.19,GKNV19,GKP20,Ya17b} and  functions in  spaces of mixed smoothness  \cite{MoDu19, Suzu18}. 
	
	 It has been shown that there is a close relation between  the approximation by sampling recovery based on B-spline interpolation and quasi-interpolation representation, and the  approximation   by     deep neural networks  \cite{AlNo20,DDF.19,MoDu19,Suzu18,ScZe19,Ya17a,Ya17b}.
Most of  these papers used  deep   ReLU  (Rectified Linear Unit)  neural networks for approximation since the rectified linear unit is a simple and preferable activation function in many applications. The output of such a network is a continuous piece-wise linear function which is easily and cheaply  computed. 

In recent decades,  the high-dimensional approximation of functions or signals  depending on a large number $d$ of variables, has been of great interest since they can be applied in a striking number of fields such as Mathematical Finance, Chemistry, Quantum Mechanics, Meteorology, and, in particular, in Uncertainty Quantification and Deep Machine Learning. 
A numerical method for such problems may require a computational cost increasing exponentially in  dimension $d$ when the accuracy increases. This phenomenon is called the curse of dimensionality, coined by Bellman \cite{Bel57B}. Hence
for an efficient computation in  high-dimensional approximation, one of the key prerequisites is that the curse of dimension can be avoided  or eased at least to some extent.  In some cases this can be  achieved, particularly when  the functions to be approximated have an appropriate mixed smoothness, see \cite{BuGr04,NoWo08,NoWo10} and references there. With this restriction one can apply approximation methods and sampling algorithms constructed on  hyperbolic crosses and  sparse grids which  give a surprising effect since hyperbolic crosses and sparse grids have the number of elements much less than those of standard domains and grids but give the same approximation error. This essentially reduces the computational cost, and therefore makes the problem tractable.

The approximation by deep ReLU neural networks of functions having a mixed smoothness is very related to the high-dimensional sparse-grid  approach which was introduced by Zenger for numerical solving partial differential equations (PDEs). 
For functions of mixed smoothness of integer order,  high-dimensional sparse-grid approximations with application was investigated by Bungartz and Griebel \cite{BuGr04}  employing hierarchical Lagrange polynomials multilevel basis  and measuring the approximation error in the  norm of $L_2(\IId)$ or energy norm of  $\mathring{W}_2^1(\IId)$. 
In  the paper \cite{Yser10} on the electronic Schr\"odinger equation with very large number of variables, Yserentant used sparse-grid methods for approximation of the eigenfunctions of the electronic  Schr\"odinger operator having a certain mixed smoothness. 
Triebel \cite[Chapter 6]{Tri15B} has indicated  that when the initial data belongs to spaces with mixed
smoothness,  Navier-Stokes equations admit a unique  solution having some mixed smoothness.
There is a very large number of papers on sparse grids in various problems of high-dimensional approximation in  numerical solving of PDEs and stochastic PDEs, etc. to mention all of them.  The reader can see the surveys in  
\cite{BuGr04} and the references therein.


Consider the problem of approximation of  functions $f$   on $\IId$ in a  space $X$ of a particular smoothness  by trigonometric (for periodic $f$) or dyadic B-splines with accuracy $\varepsilon$ and error measured in the  norm of the space $L_p(\IId)$  or isotropic Sobolev space $W^\gamma_p(\IId)$ $(1\le p\le\infty,\ \gamma >0)$. If  $X$ is   the H\"older space of isotropic smoothness $\alpha$, then the  computation complexity typically is  estimated  similarly by $ C(\alpha,d,p) \varepsilon^{-d/\alpha}$ or $ C(\alpha,d,p) \varepsilon^{-d/(\alpha-\gamma)}$, respectively. For  the H\"older space $X$ of mixed smoothness $\alpha$, the  computation complexity is  bounded by $ C(\alpha,d,p) \varepsilon^{-1/\alpha}\log^{d-1}(\varepsilon^{-1})$ or $ C(\alpha,d,p) \varepsilon^{-1/(\alpha-\gamma)}$, respectively,  i.e., the  bounds of the  computation complexity are of quite different forms. 
Here, $C(\alpha,d,p)$ is a constant depending on $\alpha, d, p$ as well as the norm in which the smoothness is defined.  Similar estimates hold true for the computation complexity of  approximation of functions in Sobolev or Besov type spaces of isotropic and mixed smoothness. Notice also that only in the last case the term in $\varepsilon$ is free from the dimension $d$. As usual, in classical settings of  approximation problem which does not take account of dimension-dependence, this constant is not of interest and its value is not specified.

One of central problems in high-dimensional approximation is to give an evaluation explicit in $d$ for the  term $C(\alpha, d,p)$ 
in the above mentioned estimates of computation complexity to understand the tractability of the approximation problem.  

 We briefly recall  some known results  on approximation by deep ReLU neural networks directly related  to the present paper. 
  	In \cite{Ya17a}, the author constructed a deep ReLU neural network of depth $\cO(\log(\varepsilon^{-1}))$ and size $\cO(\varepsilon^{-d/r} \log(\varepsilon^{-1}))$ that is capable  in $L_\infty(\IId)$-approximating  with accuracy $\varepsilon$ of functions on $\IId$ from the unit ball of isotropic Sobolev space $W^r_\infty(\IId)$ of smoothness $r \in \NN$. By using known results on VC-dimension of deep ReLU neural networks a lower bound also was established for this approximation.  In \cite{GKP20}, this  result was extended to the isotropic Sobolev space $W^r_p(\IId)$ with error measured in the norm of $W^s_p(\IId)$ for $p\in [1,\infty]$ and $s < r$. Considering  the $L_q(\IId)$-approximation of functions from the unit ball of Besov space $B^\alpha_{p,\theta}(\IId)$ of mixed smoothness  $\alpha>\max(0,1/p -1/q)$  by deep ReLU networks  of depth $\cO(\log N)$ and size $ \cO(N\log N)$, the author of \cite{Suzu18} evaluated the approximation error as $\cO( N^{-\alpha}\log^{\alpha(d-1)}N)$. The lower bound of the approximation error was estimated via known results on linear $N$-widths. In \cite{MoDu19}, the authors constructed a deep ReLU  neural network for $L_\infty(\IId)$-approximation  with accuracy $\varepsilon$ of a function with zero boundary condition  in Sobolev space $ X^{p,2} \subset W^2_p(\IId) $ ($p=2,\infty$) of mixed smoothness $2$. Its depth and size are evaluated as  $\cO(\log(\varepsilon^{-1}) \log d)$ and $\cO( \varepsilon^{-1/2} \log^{\frac{3}{2}(d-1) + 1}(\varepsilon^{-1})(d - 1))$, respectively. Notice that all the hidden constants in  these estimates for computation complexity and convergence rate   were not computed explicitly in dimension $d$. In particular, in the proof of the convergence rate in the paper \cite{Suzu18}, the author used a  discrete quasi-norm equivalence of Besov spaces established \cite{Dung11a} which does not allow to find such constants explicit in $d$. Also,  due to the homogeneous  boundary condition  of the functions from the   unit ball of the spaces $ X^{p,2}$ considered in   \cite{MoDu19}, their $d$-dimensional  $L_\infty(\IId)$-norm  is  decreasing as fast as $M_p^{-d}$  for some $M_p > 1$, when $d$ going to infinity, see Remark \ref{rmk4.6} for details.



The purpose of the present paper is to study  the  computation complexity of  deep  ReLU neural networks 
 for the high-dimensional approximation  of functions  from  H\"older-Zygmund  space $\mathring{H}_\infty^\alpha$  of mixed smoothness $\alpha$  satisfying the homogeneous boundary condition, when the dimension $d$ may be very large. 	The approximation error  is measured in the norm of  the  isotropic Sobolev space  $ \mathring{W}_p^1:= \mathring{W}_p^1(\IId)$. We focus our attention on $d$-dependence of this computation complexity.  For every function $f \in \mathring{H}_\infty^\alpha$, we want to explicitly construct a deep ReLU neural network having an output that  approximates $f$  with a prescribed accuracy $\varepsilon$, and prove $d$-dependent   bounds of the computation complexity of this approximation characterized as  its size and the depth,  explicitly in $d$ and $\varepsilon$   (cf. \cite{AnBa09B,DDF.19,Ya17a, MoDu19}). 

	 Let us emphasize that  this problem of approximating functions from the {space} $\mathring{H}_\infty^\alpha$ with error measured in  the norm of  the space $\mathring{W}_p^1$,  in particular,   the energy norm of the space $V:= \mathring{W}_2^1$, naturally arises from some high-dimensional approximation and numerical methods  of PDEs, see  \cite{BuGr99,BuGr04,GGT01,GK09} for Poisson's equation. For elliptic PDEs with homogeneous boundary condition, if  the initial data and diffusion coefficients  have a mixed smoothness, then the solution belongs to  $\mathring{H}_\infty^\alpha$ with a certain $\alpha >0$. One then can consider the problem of approximation of this solution  by deep  ReLU  neural networks with error measured in the energy norm of   $V$. See  a detailed  example  in Remark  \ref{remark:PDE}.

 We briefly describe our contribution  to high-dimensional  approximation by deep ReLU neural networks. {Denote by $\Uas$ the unit ball in the space $\mathring{H}_\infty^\alpha$.}
 For every $f\in \Uas$, we explicitly construct a deep ReLU neural network $\Phi_f$ having  an output $\mathcal{N}(\Phi_f,\cdot)$ that approximates $f$ in  the $ \mathring{W}_p^1$-norm with a prescribed accuracy $\varepsilon$ and having  the computation complexity expressing the dimension-dependent  size 
 \[
 W(\Phi_f ) \leq C_1(\alpha,p) B^{-d}(\varepsilon^{-1})^{\frac{1}{\alpha-1}}\log(\varepsilon^{-1}),
 \]
  and
 the  dimension-dependent depth  
 \[
 L(\Phi_f ) \leq  C_2(\alpha) \log d \log(\varepsilon^{-1}),
 \] 
 where $B=B(d,\alpha,p)>0$.  Notice  the upper bounds of the size $ W(\Phi_f )$ and the depth $ L(\Phi_f )$ consist of three terms. The first term is independent of  the dimension $d$ and  the accuracy $\varepsilon$, the second term depends only on the dimension $d$  and the third term depends only on the accuracy $\varepsilon$. For the depth $ L(\Phi_f )$ the second term $\log d$ is very mild. 
 If a light restriction  holds, in particular when $1\leq p\leq 2$, for the size $ W(\Phi_f )$  the second term $B^{-d}$ satisfies the inequality  $B>1$ when  $d>d_0(\alpha,p)$.

By using a recent result on VC-dimension  bounds for piecewise
linear neural networks in \cite{BHLM19} we prove the following  dimension-dependent lower bound  for the case when $p = \infty$.  For a given $\varepsilon >0$,  if $\mathbb{A}$ is a neural network architecture of depth $L(\mathbb{A})\leq C \log(\varepsilon^{-1})$ such that for any $f\in \Uas$, there is a deep  ReLU neural network $\Phi_f$ of   architecture  $\mathbb{A}$ that approximates $f$ with accuracy $\varepsilon$,	
	 then there exists a constant  $C_3(\alpha)>0$ such that
		\begin{equation*}
		W(\mathbb{A})\geq C_3(\alpha)\, 24^{-\frac{d}{\alpha-1}}\varepsilon^{-\frac{1}{\alpha-1}} (\log (\varepsilon^{-1}))^{- 2}.	
		\end{equation*}		


 The proof of these results, in particular, the construction of the approximating deep ReLU neural networks are relied on  interpolation sampling recovery methods on sparse-grids of points tailored fit to the H\"older-Zygmund mixed smoothness $\alpha$ and the regularity of the isotropic Sobolev space  $\mathring{W}_p^1$. These sampling recovery methods are explicitly constructed as a truncated Faber series of functions to be approximated.

 Let us  analyze some differences  in  the proofs of  results between the present paper and the close paper \cite{Suzu18} as well as  the other related papers \cite{GKP20, MoDu19,Ya17a}(see also Remarks \ref{sparsity}, \ref{rmk3.3},  \ref{rmk4.3} and \ref{rmk4.4}). 
 
 Firstly, to prove the results in \cite{Suzu18}, the author employed  discrete (quasi-)norm equivalence in terms of the valued-functional coefficients of B-spline quasi-interpolation representation for the Besov space $B^\alpha_{p,\theta}(\IId)$ \cite{Dung11a}.  But, as mentioned above, this does not allow to estimate the dimension-dependent component of the approximation error. In the present paper, by using the representation of functions by Faber series we obtained the dimension-dependent bounds for the  size and depth of a deep ReLU neural network required for approximation of functions from $\Uas$. This is a difference in the proofs  between \cite{Suzu18} and the present paper.
 	
	Secondly, in both the papers functions to be approximated have  a certain anisotropic mixed smoothness, but the norm measuring approximation error used in \cite{Suzu18} (also in \cite{ MoDu19}) is of the Lebesgue space $L_q(\IId)$, while in our paper  is of the isotropic space Sobolev $\mathring{W}_p^1(\IId)$. The anisotropic mixed smoothness and the difference between the norms of $L_q(\IId)$ and $\mathring{W}_p^1(\IId)$ together  lead to different methods of construction of (quasi-)interpolation sparse-grid sampling approximation and hence of deep ReLU neural network approximation (notice that these methods  are similar if functions to be approximated have an isotropic  smoothness  \cite{GKP20,Ya17a}). In particular, the authors in \cite{Suzu18} and \cite{ MoDu19} used classical Smolyak grids, while in this paper we use ``notched" Smolyak grids. Therefore, the sparsity of the grid points for interpolation sampling in our paper is much higher than the sparsity of those in \cite{Suzu18} and \cite{ MoDu19}.



The outline of this  paper is as follows. In Section \ref{ReLU}, we recall necessary knowledge of deep ReLU neural networks. 
Section \ref{sec:sampling}  introduces  function spaces under consideration,
presents a  representation of continuous functions on the unit cube $\IId$ by Faber series and  proves some error estimates of approximation by sparse-grid  sampling recovery for functions in H\"older-Zygmund classes $\Uas$. In  Section \ref{sec:dnn}, based on the results in  Section \ref{sec:sampling}, we construct  a deep ReLU neural network that approximates  in the norm of the space $\mathring{W}_p^1$ functions in $\Uas$ and prove  upper and lower estimates for  the size and depth required.   Some concluding remarks are presented in Section \ref{sec:conclusion}.
\\

\noindent
 {\bf Notation.} \ As usual, $\NN$ denotes the natural numbers, $\ZZ$ denotes the integers, $\RR$ the real numbers and $ \NN_0:= \{s \in \ZZ: s \ge 0 \}$; $\NNn= \NN_0\cup \{ -1\} $. 
The letter $d$ is always reserved for
the underlying dimension of $\RR^d$, $\NN^d$, etc., and  $[d]$ denotes the set of all natural numbers from $1$ to $d$. Vectorial quantities are denoted by boldface
letters and  $x_i$ denotes the $i$th coordinate 
of $\bx \in \RR^d$, i.e., $\bx := (x_1,\ldots, x_d)$. We use the notation  $\bx \by $ for 
the usual Euclidean inner product in $\RR^d$ and
$2^\bx := (2^{x_1},\ldots,2^{x_d})$. For $\bk, \bs \in \NNd_0$,  we denote $2^{-\bk}\bs := (2^{-k_1}s_1,\ldots,2^{-k_d}s_d)$. For $\bx\in \RR^d$ we write $|\bx|_0=|\{x_j\not =0, \ j=1,\ldots,d \}|$ and if $0< p\leq \infty$ we denote 
$|\bx|_p := \big(\sum_{i=1}^d |x_i|^p\big)^{1/p}$ with the usual modification when $p=\infty$.
 The notations $|\cdot|_0$ and $|\cdot|_p$  are extended to matrices in $\RR^{m\times n}$.  For the function $f$ on $\RR^d$, $\supp(f)$ denotes the support of $f$. The value $g(\infty)$ of the function $g$ of one variable is understood as $g(\infty)=\lim_{p\to \infty} g(p)$ when the limit exists.

\section{Deep ReLU neural networks} \label{ReLU}

There is a wide variety of  deep neural
network architectures and each of them is adapted to specific tasks.  For approximation of functions from H\"older-Zygmund spaces, in this section we introduce feed-forward  deep ReLU neural networks with one-dimension output. We are interested in  standard deep neural  networks where only connections between
neighboring layers are allowed. Let us introduce necessary definitions and elementary facts on deep ReLU neural networks.

\begin{definition}\label{def:DNN}
	Let $d,L\in \NN$ and $L\geq 2$.
	\begin{itemize} 
		\item A deep neural network  $\Phi$ with input dimension $d$ and $L$ layers  is  a sequence of matrix-vector tuples
		$$\Phi=\big((\bW^1,\bb^1),\ldots,(\bW^L,\bb^L) \big),$$
		where	 $\bW^\ell=(w^\ell_{i,j})$ is an $N_\ell\times N_{\ell-1}$ matrix, and $\bb^\ell =(b^\ell_j)\in \RR^{N_\ell}$ with  $N_0=d$, $N_L=1$, and $N_1,\ldots,N_{L-1}\in \NN$. We call  the number of layers $L(\Phi)=L$  the depth and  $\bN(\Phi)=(N_0,N_1,\ldots,N_L)$ the dimension of the network. The real numbers $w^\ell_{i,j}$ and $b^\ell_j$ are called edge and node weights of the network  $\Phi$, respectively. The number of nonzero weights  $w^\ell_{i,j}$ and $b^\ell_j$  is called    the size of the network $\Phi$ and denoted by $W(\Phi)$, i.e., $
		W(\Phi): =\sum_{\ell=1}^L\big|\bW^\ell \big|_0 + \sum_{\ell=1}^L |\bb^\ell|_0 
		$. We call $N_w(\Phi)=\max_{\ell=0,\ldots,L}\{ N_\ell\}$ the width of the network $\Phi$.
		\item 	A neural network  architecture $\mathbb{A}$ with input dimension $d$ and $L$ layers is a neural network 
		$$\mathbb{A}=\big((\bW^1,\bb^1),\ldots,(\bW^L,\bb^L) \big),$$
		where elements of $\bW^\ell$ and $\bb^\ell$, $\ell=1,\ldots,L$, are in $\{0,1\}$. 
	\end{itemize}
\end{definition}
Since we are interested only in  deep neural networks with scalar output, $\bb^L$ is a constant. However, for consistent notation, we still use bold letter.

A graph associated to a  deep neural network $\Phi$ defined in Definition \ref{def:DNN} is a graph consisting of $|\bN(\Phi)|_1$ nodes and $\sum_{\ell=1}^L|\bW^\ell|_0$ edges.  $|\bN(\Phi)|_1$ nodes are placed in $L+1$ layers which are numbered from $0$ to $L$. The $\ell$th layer has $N_\ell$ nodes which are numbered from 1 to $N_\ell$. If $w^\ell_{i,j}\not =0$, then there is an edge connecting the node $j$ in the layer $\ell-1$ to the node $i$  in the layer $\ell$.
See Figure 	\ref{fig:neuralnetwork} for an illustration of a graph associated to   a deep neural network. 
\begin{definition} 
	Given $L\in \N
	$, $L\geq 2$, and a deep neural network  architecture  $\mathbb{A}= \big((\overline{\bW}^1,\overline{\bb}^1), \ldots,\allowbreak (\overline{\bW}^L,\overline{\bb}^L) \big)$. We say that a neural network  $\Phi=\big((\bW^1,\bb^1),\ldots,(\bW^L,\bb^L) \big)$  has architecture $\mathbb{A}$ if 
	\begin{itemize}
		\item  $\bN(\Phi)=\bN(\mathbb{A})$
		\item   $\overline{w}^\ell_{i,j}=0$ implies $w^\ell_{i,j}=0$, $\overline{b}^\ell_i=0$ implies $b^\ell_i=0$ for all $i=1,\ldots,N_\ell$, $j=1,\ldots,N_{\ell-1}$, and $\ell=1,\ldots, L$.  Here $\overline{w}^\ell_{i,j}$ are entries of $\overline{\bW}^\ell$ and $\overline{b}^\ell_i$ are elements of $\overline{\bb}^\ell$, $\ell=1,\ldots,L$.
	\end{itemize}
	
	For a given deep neural network $\Phi=\big((\bW^1,\bb^1),\ldots,(\bW^L,\bb^L) \big)$, there exists  
	a unique deep neural network  architecture $\mathbb{A}= \big((\overline{\bW}^1,\overline{\bb}^1),\ldots,(\overline{\bW}^L,\overline{\bb}^L) \big)$ such that
	\begin{itemize}
		\item  $\bN(\Phi)=\bN(\mathbb{A})$
		\item   $\overline{w}^\ell_{i,j}=0$ $\Longleftrightarrow$ $w^\ell_{i,j}=0$, $\overline{b}^\ell_i=0$ $\Longleftrightarrow$ $b^\ell_i=0$ for all $i=1,\ldots,N_\ell$, $j=1,\ldots,N_{\ell-1}$, and $\ell=1,\ldots, L$.  
	\end{itemize}
	We call this architecture $\mathbb{A}$ the minimal architecture of $\Phi$ (this definition is proper in the sense that any architecture of $\Phi$ is also an architecture of $\mathbb{A}$.)
\end{definition}
A deep neural network is   associated with an activation function. The choice of activation function depends on the problem under consideration.  In this paper we focus our attention on ReLU activation function defined by 
$\sigma(t):= \max\{t,0\}, t\in \R$.  We will use the notation 
$\sigma(\bx):= (\sigma(x_1),\ldots, \sigma(x_d))$ for $\bx \in \RRd$.
\begin{definition}\label{def:ReLu-network}
	A deep ReLU neural network   with input dimension $d$ and $L$ layers is a neural network  
	$$\Phi=\big((\bW^1,\bb^1),\ldots,(\bW^L,\bb^L) \big)$$
	in which the following
	computation scheme is implemented
	\begin{align*}
	\bz^0&: = \bx \in \RR^d,
	\\
	\bz^\ell &: = \sigma(\bW^{\ell}\bz^{\ell-1}+\bb^\ell), \ \ \ell=1,\ldots,L-1,
	\\
	\bz^L&:= \bW^L\bz^{L-1} + \bb^L.
	\end{align*}
	We call $\bz^0$ the input  and 
	$\mathcal{N}(\Phi, \bx):= \bz^L$ the output of $\Phi$.
\end{definition}

\begin{figure} 
	\begin{center}
		\begin{tikzpicture}  
		\tikzstyle{place}=[circle, draw=black,scale=1, inner sep=3pt,minimum size=1pt, align=center]
		
		\foreach \x in {1,...,3}
		\draw node at (0, -\x*0.8+1.6) [place] (input_\x) { $$};
		
		\foreach \x in {1,...,4}
		\node at (2.5, -\x*0.8+ 2.0) [place] (hidden1_\x){$$};

	\foreach \i in {1,...,3}
\foreach \j in {1,...,4}
\draw [-] (input_\i) to (hidden1_\j);

		\foreach \x in {1,...,5}
		\node at (5, -\x*0.8+2.5) [place] (hidden2_\x){$$};

\foreach \i in {1,...,4}
\foreach \j in {1,...,5}
\draw [-] (hidden1_\i) to (hidden2_\j);
		
		\foreach \x in {1,...,4}
		\node at (7.5, -\x*0.8+2.0) [place] (hiddenl1_\x){$$};

\foreach \i in {1,...,5}
\foreach \j in {1,...,4}
\draw [-] (hidden2_\i) to (hiddenl1_\j);

		\foreach \x in {1,...,4}
		\node at (10, -\x*0.8+2.0) [place] (hiddenl_\x){$$};

\foreach \i in {1,...,4}
\foreach \j in {1,...,4}
\draw [-] (hiddenl1_\i) to (hiddenl_\j);

	\foreach \x in {1}
\node at (12.5, 0) [place] (output_\x){$$};

		\foreach \i in {1,...,4}
		\foreach \j in {1}
		\draw [-] (hiddenl_\i) to (output_\j);

		\node at (0, -2.2) [align=center]{input \\ layer};
		\node at (2.5, -2.2) [align=center] {$1^{st}$\\ layer};
		\node at (5, -2.2) [align=center] {$2^{nd}$\\ layer};
		\node at (7.5, -2.2) [align=center] {$3^{rd}$\\ layer};
		\node at (10, -2.2) [align=center] {$4^{th}$\\ layer};
		\node at (12.5, -2.2) [align=center] {output\\ layer};

		
		\end{tikzpicture}
		\caption{The graph associated to a deep neural network with input dimension 3 and 5 layers}
		\label{fig:neuralnetwork}
	\end{center}
\end{figure}
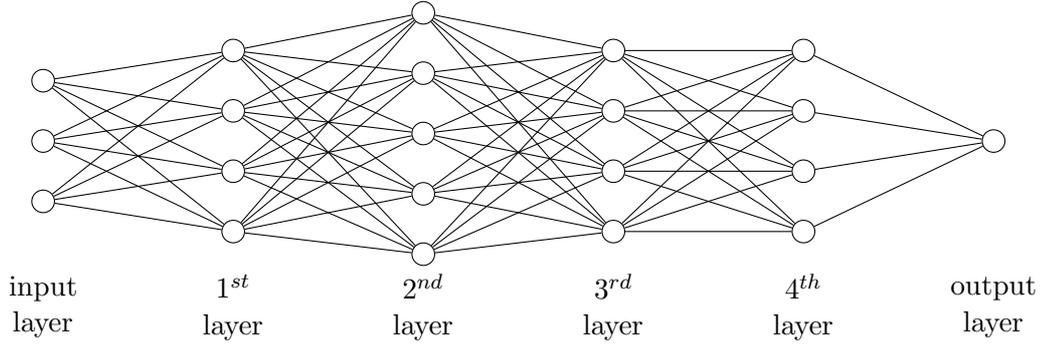

Several deep ReLU neural networks can be combined  with each other to form a larger  deep ReLU neural network whose output is a linear combination of outputs of   the component networks  as in the following lemma. This combination is called parallelization. For other combinations, such as concatenation, we refer to \cite[Section 2]{GKP20} or \cite{DDF.19}. Note that our parallelization construction  below differs slightly from that in \cite{GKP20} since we only consider deep ReLU neural networks  with scalar output. Therefore, for convenience of the reader, we give a proof for this parallelization construction.
\begin{lemma}\label{lem:paralell}
	Let $N\in \NN$, $\Omega\subset \RR^d$ be a bounded set, $\lambda_j\in \RR$, $j=1,\ldots,N$. Let $\Phi_j$, $j=1,\ldots,N$ be deep neural networks with input dimension $d$, $L_j$ layers, and   size $W_j$ respectively. Then we can explicitly construct a deep ReLU neural network $\Phi$ such that 
	$$
	\mathcal{N}(\Phi,\bx)
	=
	\sum_{j=1}^N\lambda_j\mathcal{N}(\Phi_j,\bx),\quad \bx\in \Omega,
	$$ 
	with  $L(\Phi)=\max_{j=1,\ldots,N}\{L_j\}$ and $W(\Phi)=\sum_{j=1}^NW_j+\sum_{j: L_j<L}(L-L_j+2)$. 
\end{lemma}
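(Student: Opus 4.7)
The plan is to construct $\Phi$ in two stages: first, for each $j$ with $L_j < L := \max_{k=1,\ldots,N} L_k$, I will extend $\Phi_j$ to a deep ReLU neural network $\tilde\Phi_j$ of depth $L$ which computes the same function on $\Omega$; second, I will combine the $\tilde\Phi_j$ in parallel with coefficients $\lambda_j$. For the parallel combination, the hidden-layer weight matrices of $\Phi$ are block-diagonal with blocks given by the corresponding matrices of the $\tilde\Phi_j$ (except at the first layer, where the matrices are stacked row-wise since all the $\tilde\Phi_j$ share the input $\bx$), the hidden-layer biases are concatenated, and the output layer uses the row vector $\bW^L = (\lambda_1 \bW^L_{\tilde\Phi_1},\ldots,\lambda_N \bW^L_{\tilde\Phi_N})$ and bias $\bb^L = \sum_{j=1}^N \lambda_j \bb^L_{\tilde\Phi_j}$. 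This immediately yields $\mathcal{N}(\Phi,\bx) = \sum_{j=1}^N \lambda_j \mathcal{N}(\tilde\Phi_j,\bx)$ on $\Omega$, depth $L$, and $W(\Phi) \leq \sum_{j=1}^N W(\tilde\Phi_j)$.

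The key step is the depth extension. Fix $j$ with $L_j < L$. Since $\Omega$ is bounded and $\mathcal{N}(\Phi_j,\cdot)$ is continuous, I can choose a constant $C_j \geq 0$ with $\mathcal{N}(\Phi_j,\bx) + C_j \geq 0$ for every $\bx \in \Omega$. I then define $\tilde\Phi_j$ to be the network which: keeps the first $L_j - 1$ hidden layers of $\Phi_j$ unchanged; at layer $L_j$ uses $\bW^{L_j}_j$ and the shifted bias $\bb^{L_j}_j + C_j$, so that the ReLU (now acting because layer $L_j$ has become hidden) returns exactly $\mathcal{N}(\Phi_j,\bx) + C_j$; at each of layers $L_j+1,\ldots,L-1$ uses a single neuron with edge weight $1$ and zero bias, which merely propagates the nonnegative scalar through ReLU as the identity; and at the output layer uses edge weight $1$ and bias $-C_j$, undoing the shift. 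Thus $\mathcal{N}(\tilde\Phi_j,\bx) = \mathcal{N}(\Phi_j,\bx)$ on $\Omega$. For $j$ with $L_j = L$ I simply take $\tilde\Phi_j := \Phi_j$.

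For the size bound of the extension, the weights inherited from $\Phi_j$ contribute at most $W_j + 1$ (one extra nonzero may appear at the bias of layer $L_j$ if $\bb^{L_j}_j$ was originally zero but $C_j\neq 0$), the $L - L_j - 1$ identity pass-through hidden layers contribute $L - L_j - 1$ edge weights, and the new output layer contributes $2$ (one edge plus the bias $-C_j$). This gives $W(\tilde\Phi_j) \leq W_j + (L - L_j + 2)$, which summed with the parallelization bound produces the stated $W(\Phi)$.

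The main conceptual point is the choice of extension that respects the size count stated in the lemma. A direct approach would have to propagate a signed scalar through ReLU layers, which forces the standard two-neuron $t \mapsto (\sigma(t),\sigma(-t))$ trick and at least doubles the cost of each extra layer; boundedness of $\Omega$ lets me instead use a cheap single-neuron identity by first shifting the output into the nonnegative range with $C_j$ and then subtracting $C_j$ in the final linear layer, keeping the added cost linear in $L - L_j$ and independent of the width of $\Phi_j$.
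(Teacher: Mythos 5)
Your proposal is correct and follows essentially the same route as the paper's proof: the same shift-by-a-constant trick (exploiting boundedness of $\Omega$) to pass the scalar output through single-neuron identity layers and undo the shift at the end, followed by the standard block-diagonal parallelization; the paper merely writes out $N=2$ and notes the general case is analogous, while you treat general $N$ directly and are slightly more explicit about the weight count at layer $L_j$. No gaps.
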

\begin{proof}We prove first for $N=2$. 
	Without loss of generality we assume that $L_1\leq L_2$ and
	$$\Phi_1=\big((\bW^1_1,\bb^1_1),\ldots, (\bW^{L_1}_1,\bb^{L_1}_1) \big);\qquad \Phi_2=\big((\bW^1_2,\bb^1_2),\ldots, (\bW^{L_2}_2,\bb^{L_2}_2) \big).$$
	If $L=L_1= L_2$, then we can choose
	$$
	\Phi=\big((\bW^1,\bb^1),\ldots, (\bW^{L},\bb^{L}) \big),
	$$
	where
	$$\bW^1=\begin{bmatrix} 
	\bW^1_1 \\
	\bW^1_2\\
	\end{bmatrix},\ \bW^\ell=\begin{bmatrix} 
	\bW^{\ell}_1 & 0 \\
	0 & \bW^{\ell}_2\\
	\end{bmatrix}, \ \ell=2,\ldots,L-1, \  \bW^{L}=\begin{bmatrix} 
	\lambda_1 \bW^{L}_1  & \lambda_2 \bW^{L}_2\\
	\end{bmatrix}
	$$
	and
	$$\bb^1=\begin{bmatrix} 
	\bb^1_1 \\
	\bb^1_2\\
	\end{bmatrix}, \ \ell=1,\ldots,L_1
	,\quad 
	\bb^{L}= 
	\lambda_1	\bb^{L}_1  +
	\lambda_2	\bb^{L}_2 .
	$$ 
	In this case we have $W(\Phi)\leq W_1+W_2$. If $L_1< L_2$ we construct a network $\tilde{\Phi}_1$ with output  $\mathcal{N}(\tilde{\Phi},\bx)=\mathcal{N}(\Phi,\bx)$ and having $L_2$ layers. The strategy here is to modify the network $\Phi_1$ by making its output layer satisfying $\mathcal{N}(\Phi,\bx)+C\geq 0$, for some constant $C$, so that this value does not change when we apply function $\sigma$ for layers from $L_1+1$ to $L_2$. For this we put $M_1=\sup_{\bx\in \Omega}|\mathcal{N}(\Phi_1,\bx)|$. Note that $\mathcal{N}(\Phi_1,\cdot)$ is a continuous function on $\Omega$ hence $M_1<\infty$.  The network $\tilde{\Phi}_1$ is  
	$$\big((\bW^1_1,\bb^1_1),\ldots, (\bW^{L_1-1}_1,\bb^{L_1-1}_1),(\bW^{L_1}_1,\bb^{L_1}_1+M_1), (1,0),\ldots, (1,0), (1,-M_1) \big).$$
	Hence $W(\tilde{\Phi}_1)\leq W_1+L_2-L_1+2 $.
	Now following procedure as the case $L_1=L_2$ with $\Phi_1$ replaced by $\tilde{\Phi}_1$ we obtain the assertion when $N=2$. The case $N>2$ is extended in a similar manner. 
	\hfill \end{proof}
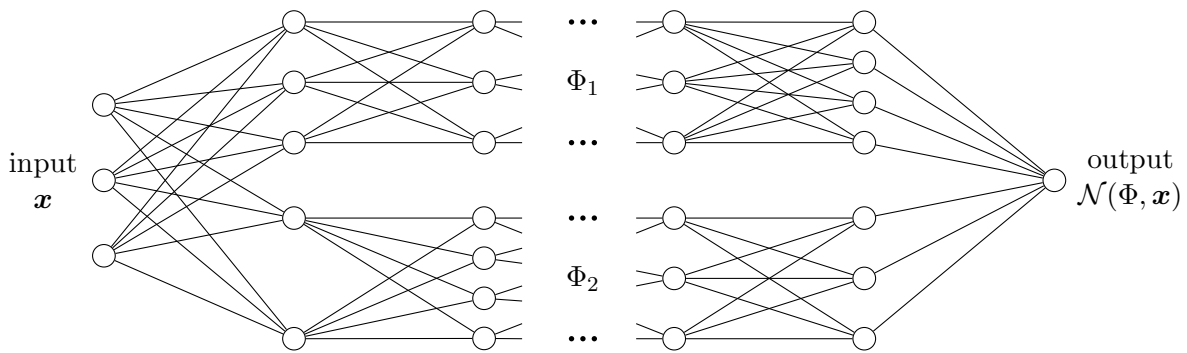
\begin{figure}
	\begin{center}
		\begin{tikzpicture}
		\tikzstyle{place}=[circle, draw=black,scale=1, inner sep=3pt,minimum size=1pt, align=center]
		
		\foreach \x in {1,...,3}
		\draw node at (0, -\x*1.0+2.0) [place] (input_\x) { $$};
		\node at (-0.8, 0) [align=center]{input
			\\ $\bx$};
		
		\foreach \x in {1,...,3}
		\node at (2.5, 0.5 + \x*0.8+ -0.8) [place] (hiddenks11_\x){$$};
		
		\foreach \x in {1,...,2}
		\node at (2.5,  -0.5-\x*1.6 + 1.6) [place] (hiddenks21_\x){$$};

		\foreach \x in {1,...,3}
		\node at (5.0, 0.5 +\x*0.8-0.8) [place] (hiddenks12_\x){$$};

		\foreach \x in {1,...,4}
		\node at (5.0, -0.5 -\x*0.532+0.532) [place] (hiddenks22_\x){$$};

\foreach \x in {1,...,3}
\fill (6.0+\x*0.15, 2.1) circle (1pt);
\foreach \x in {1,...,3}	
\fill (6.0+\x*0.15, 0.5) circle (1pt);		
\foreach \x in {1,...,3}
\fill (6.0+\x*0.15, -2.1) circle (1pt);
\foreach \x in {1,...,3}	
\fill (6.0+\x*0.15, -0.5) circle (1pt);

		\foreach \x in {1,...,3}
		\node at (7.5, 0.5+\x*0.8- 0.8) [place] (hiddenks13_\x){$$};

		\foreach \x in {1,...,3}
		\node at (7.5, -0.5-\x*0.8+0.8) [place] (hiddenks23_\x){$$};
	
	\foreach \x in {1,...,4}
	\node at (10, 0.5 + \x*0.532-0.532) [place] (hiddenks14_\x){$$};
	
	\foreach \x in {1,...,3}
	\node at (10,  -0.5-\x*0.8+0.8) [place] (hiddenks24_\x){$$};
		
		\node at (12.5, 0)  [place] (output){$$};
		\node at (13.5, 0) [align=center]{output
		\\
	 $\mathcal{N}(\Phi,\bx)$};


\foreach \i in {1,...,3}
\foreach \j in {1,...,3}
\draw [-] (input_\i) to (hiddenks11_\j);

\foreach \i in {1,...,3}
\foreach \j in {1,...,2}
\draw [-] (input_\i) to (hiddenks21_\j);


\foreach \i in {1,...,3}
\foreach \j in {1,...,3}
\draw [-] (hiddenks11_\i) to (hiddenks12_\j);

\foreach \i in {1,...,2}
\foreach \j in {1,...,4}
\draw [-] (hiddenks21_\i) to (hiddenks22_\j);


\foreach \i in {1,...,3}
\foreach \j in {1,...,4}
\draw [-] (hiddenks13_\i) to (hiddenks14_\j);

\foreach \i in {1,...,3}
\foreach \j in {1,...,3}
\draw [-] (hiddenks23_\i) to (hiddenks24_\j);


\foreach \i in {1,...,4}
\draw [-] (hiddenks14_\i) to (output);

\foreach \i in {1,...,3}
\draw [-] (hiddenks24_\i) to (output);



\foreach \x in {1,2}
\draw [-] (hiddenks12_1) to (5.5, 0.5+0.2*\x -0.2);
\foreach \x in {1,2}
\draw [-] (hiddenks12_2) to (5.5, 1.4-0.2*\x+0.2);
\foreach \x in {1,2}
\draw [-] (hiddenks12_3) to (5.5, 2.1-0.2*\x +0.2);

\foreach \x in {1,2}
\draw [-] (hiddenks22_4) to (5.5, -2.3+0.2*\x);
\foreach \x in {1,2}
\draw [-] (hiddenks22_3) to (5.5, -1.8+0.2*\x);
\foreach \x in {1,2}
\draw [-] (hiddenks22_2) to (5.5, -1.3+0.2*\x);
\foreach \x in {1,2}
\draw [-] (hiddenks22_1) to (5.5, -0.9+0.2*\x);

\node at (6.3, 1.3) [align=center]{$\Phi_1$};
\node at (6.3, -1.3) [align=center]{$\Phi_2$};

\foreach \x in {1,2}
\draw [-] (7.0, 2.1-0.2*\x+0.2) to (hiddenks13_3);
\foreach \x in {1,2}
\draw [-]  (7.0, 1.6-0.2*\x) to (hiddenks13_2);
\foreach \x in {1,2}
\draw [-]  (7.0, 0.5+0.2*\x-0.2) to (hiddenks13_1);

\foreach \x in {1,2}
\draw [-] (7.0, -2.1+0.2*\x-0.2) to (hiddenks23_3);
\foreach \x in {1,2}
\draw [-]  (7.0, - 1.6+0.2*\x) to (hiddenks23_2);
\foreach \x in {1,2}
\draw [-]  (7.0, -0.5-0.2*\x+0.2) to (hiddenks23_1);
		\end{tikzpicture}
		\caption{The graph associated to parallelization of two neural networks}
		\label{fig:parallelization}
	\end{center}
\end{figure}

An illustration of parallelization of neural networks is given in Figure \ref{fig:parallelization}.

Let us introduce a concept of special deep neural network borrowed from  \cite{DDF.19}. It is quite  useful in construction deep ReLU neural networks with a fixed width, whose outputs are able to approximate  multivariate functions.

\begin{definition}
	A special deep neural network with  input dimension $d$ and depth $L$  (and a given activation function) can be defined as follows.  In each hidden layer a special role is reserved for $d$ first (top) nodes and the last (bottom) node. The top $d$ nodes and the bottom node are free of the activation function, other nodes in each hidden layer have the activation function. The top $d$ nodes are used to simply copy the input $\bx$. The $d$ parallel concatenations of all these top nodes can be viewed as  special channels that skip computation altogether and just carry $\bx$ forward. They are called the source channels. The bottom node in each hidden lawyer is used to collect intermediate outputs by addition. The concatenation of all these nodes is called collation channel. This channel never feeds forward into subsequent calculation, it only accepts previous calculations. 
\end{definition}

An illustration of a special deep neural network is given in Figure \ref{fig:special}. 

\begin{lemma}\label{lem:special}
	Let  $\Phi$ be a special deep ReLU neural network with input dimension $d$ and depth $L$.  Then there is a deep ReLU neural network  $\Phi'$ such that, $N_w(\Phi') = N_w(\Phi)$, $L(\Phi') = L$, and $\mathcal{N}(\Phi',\bx) = \mathcal{N}(\Phi,\bx)$, $\bx \in \IId$.
\end{lemma}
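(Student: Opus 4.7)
The plan is to convert the special network $\Phi$ into a standard ReLU network $\Phi'$ of exactly the same shape, modifying only weights and biases so that the forced insertion of $\sigma$ at every hidden node preserves the two kinds of values that in $\Phi$ were computed without activation: the copies of $\bx$ in the source channels and the running sums $C_\ell(\bx)$ in the collation channel. The key observation driving the whole argument is that $\sigma$ is the identity on any nonnegative input, so it is enough to arrange that every node whose value must be preserved receives a nonnegative pre-activation on the cube $\IId$.

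For the $d$ source channels this is immediate. Since $\IId=[0,1]^d$, each coordinate satisfies $x_i\geq 0$ and hence $\sigma(x_i)=x_i$. Using the same identity-like incoming weights as in $\Phi$, an induction on $\ell$ shows that after inserting $\sigma$ the top $d$ nodes of each hidden layer in $\Phi'$ still carry $\bx$. The middle nodes of each hidden layer are already defined via $\sigma$ in $\Phi$, so they are copied into $\Phi'$ verbatim without any modification.

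The genuine obstacle is the bottom collation node at layer $\ell$, whose value $C_\ell(\bx)$ may be negative. I handle it by a bias-shift trick: since each $C_\ell$ is continuous on the compact cube $\IId$, one can choose constants $M_\ell>0$ with $C_\ell(\bx)+M_\ell\geq 0$ for every $\bx\in\IId$ and every hidden layer $\ell$. In $\Phi'$ the bottom node of layer $\ell$ is designed to store the shifted value $\tilde C_\ell:=C_\ell+M_\ell$; this is achieved by keeping its incoming weights as in $\Phi$ and adding $M_\ell-M_{\ell-1}$ to its bias (with $M_0:=0$). The pre-activation is then nonnegative, $\sigma$ acts as the identity, and the recurrence telescopes to $\tilde C_\ell=\tilde C_{\ell-1}+(M_\ell-M_{\ell-1})+(\text{new contribution from the middle nodes of layer }\ell-1)=C_\ell+M_\ell$, exactly as desired.

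Finally, the unactivated output layer of $\Phi'$ is obtained by replacing the original output bias with $b^L-M_{L-1}$ while keeping the weight on the collation position equal to that in $\Phi$; this absorbs the accumulated shift and yields $\mathcal{N}(\Phi',\bx)=\mathcal{N}(\Phi,\bx)$ on $\IId$. Width and depth are preserved by construction, since only entries inside the existing matrix shapes have been altered. The main, though mild, technical point is the uniform choice of the constants $M_\ell$, which rests on the boundedness of the finitely many continuous functions $C_\ell$ on the compact set $\IId$.
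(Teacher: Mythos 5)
Your proposal is correct and follows essentially the same route as the paper's proof: both rely on $\sigma$ being the identity on nonnegative inputs, use $\bx=\sigma(\bx)$ on $\IId$ for the source channels, and shift the collation-channel values by constants (chosen via continuity on the compact cube) that are cancelled at the output layer. Your telescoping bias shifts $M_\ell-M_{\ell-1}$ applied to the running sums are just a repackaging of the paper's per-layer shifts $c_\ell$ with the final correction $-\sum_\ell c_\ell$.
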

\begin{proof}
	The proof follows from  \cite[Remark 3.1]{DDF.19}. First note, that the input $\bx$ belongs to $\IId = [0,1]^d$, we have $\bx = \sigma(\bx)$. For $\ell=1,\ldots,L$, the bottom node in the $\ell$-th layer collects a continuous piece-wise linear function $g_\ell(\bx)$ on $\IId$, and the output is
	$\mathcal{N}(\Phi,\bx) = \sum_{\ell=1}^{L-1} g_\ell(\bx)$.  Thus, there is a constant $c_\ell$ such that $g_\ell(\bx) + c_\ell  \ge 0$ for all $\bx \in \IId$. Hence we take $\Phi'$ having the same graph as $\Phi$ but the computation at $\ell$-th nodes in the collation channel is replaced by 
	$\sigma(g_\ell(\bx) + c_\ell)=g_\ell(\bx) + c_\ell$, 
	and the output node is
	$$\mathcal{N}(\Phi',\bx) = 
	\sum_{\ell=1}^{L-1} \sigma(g_\ell(\bx) + c_\ell) - \sum_{\ell=1}^{L-1}c_\ell=\mathcal{N}(\Phi,\bx) .$$
	\hfill
\end{proof}	

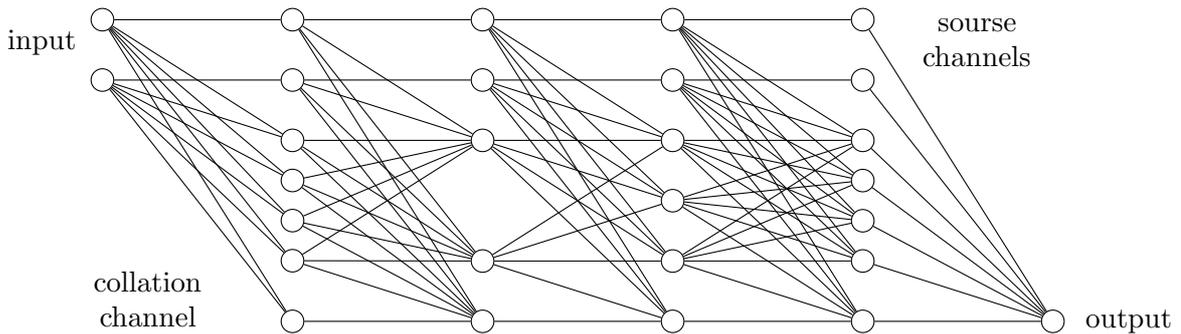
\begin{figure}
\begin{center}
\begin{tikzpicture}
\tikzstyle{place}=[circle, draw=black,scale=1, inner sep=3pt,minimum size=1pt, align=center]

\foreach \x in {1,...,2}
\draw node at (0.0, -\x*0.8 +0.8) [place] (input_\x) { $$};
\node at (-0.8, -0.3) [align=center] {input};
\node at (0.6, -3.7) [align=center] {collation\\ channel};
\node at (13.5, -4.) [align=center] {output};
\node at (11.5, -0.3) [align=center] {sourse\\ channels};

\foreach \x in {1,...,2}
\node at (2.5, -\x*0.8 +0.8) [place] (hiddenx1_\x){$$};

\foreach \x in {1,...,4}
\node at (2.5, -1.6 -\x*0.532+0.532) [place] (hiddenks1_\x){$$};

\node at (2.5, -4.0) [place] (hiddencol1){$$};

\foreach \x in {1,...,2}
\node at (5.0,  -\x*0.8 +0.8) [place] (hiddenx2_\x){$$};

\foreach \x in {1,...,2}
\node at (5.0, -1.6 -\x*1.6+1.6) [place] (hiddenks2_\x){$$};

\node at (5.0, -4.0) [place] (hiddencol2){$$};

\foreach \x in {1,...,2}
\node at (7.5, -\x*0.8 +0.8) [place] (hiddenx3_\x){$$};
\foreach \x in {1,...,3}
\node at (7.5, -1.6 -\x*0.8+0.8) [place] (hiddenks3_\x){$$};
\node at (7.5, -4.0) [place] (hiddencol3){$$};

\foreach \x in {1,...,2}
\node at (10.0,  -\x*0.8 +0.8) [place] (hiddenx4_\x){$$};
\foreach \x in {1,...,4}
\node at (10.0, -1.6 -\x*0.533+0.533) [place] (hiddenks4_\x){$$};
\node at (10.0, -4.0) [place] (hiddencol4){$$};


\node at (12.5, -4.0) [place] (output){$$};





\foreach \i in {1,...,2}
\foreach \j in {1,...,4}
\draw [-] (input_\i) to (hiddenks1_\j);
\foreach \i in {1,...,2}
\draw [-] (input_\i) to (hiddencol1);
\foreach \i in {1,...,2}
\draw [-] (input_\i) to (hiddenx1_\i);


\foreach \i in {1,...,2}
\draw [-] (hiddenx1_\i) to (hiddenx2_\i);

\foreach \i in {1,...,4}
\foreach \j in {1,...,2}
\draw [-] (hiddenks1_\i) to (hiddenks2_\j);

\foreach \i in {1,...,2}
\foreach \j in {1,...,2}
\draw [-] (hiddenx1_\i) to (hiddenks2_\j);

\foreach \i in {1,...,2}
\draw [-] (hiddenx1_\i) to (hiddencol2);

\foreach \i in {1,...,4}
\draw [-] (hiddenks1_\i) to (hiddencol2);

\draw [-] (hiddencol1) to (hiddencol2);


\foreach \i in {1,...,2}
\draw [-] (hiddenx2_\i) to (hiddenx3_\i);

\foreach \i in {1,...,2}
\foreach \j in {1,...,3}
\draw [-] (hiddenks2_\i) to (hiddenks3_\j);

\foreach \i in {1,...,2}
\foreach \j in {1,...,3}
\draw [-] (hiddenx2_\i) to (hiddenks3_\j);

\foreach \i in {1,...,2}
\draw [-] (hiddenx2_\i) to (hiddencol3);

\foreach \i in {1,...,2}
\draw [-] (hiddenks2_\i) to (hiddencol3);

\draw [-] (hiddencol2) to (hiddencol3);


\foreach \i in {1,...,2}
\draw [-] (hiddenx3_\i) to (hiddenx4_\i);

\foreach \i in {1,...,3}
\foreach \j in {1,...,4}
\draw [-] (hiddenks3_\i) to (hiddenks4_\j);

\foreach \i in {1,...,2}
\foreach \j in {1,...,4}
\draw [-] (hiddenx3_\i) to (hiddenks4_\j);

\foreach \i in {1,...,2}
\draw [-] (hiddenx3_\i) to (hiddencol4);

\foreach \i in {1,...,3}
\draw [-] (hiddenks3_\i) to (hiddencol4);

\draw [-] (hiddencol3) to (hiddencol4);


\foreach \i in {1,...,2}
\draw [-] (hiddenx4_\i) to (output);

\foreach \i in {1,...,4}
\draw [-] (hiddenks4_\i) to (output);

\draw [-] (hiddencol4) to (output);
\end{tikzpicture}
\caption{The graph associated to a special neural network with two source channels and 5 layers}
\label{fig:special}
\end{center}
\end{figure}

\section{ Faber series and high-dimensional sparse-grid sampling recovery}\label{sec:sampling}

In this  section  we  introduce the space $\Ha$  of  functions having  H\"older-Zygmund  mixed smoothness $\alpha >0$, and the  isotropic Sobolev space  $ \mathring{W}_p^1$;
 recall a representation of continuous functions on $\IId$  by tensor product Faber series.
This representation plays a fundamental role in construction of sparse-grid sampling recovery and of deep neural networks for approximation in  the $ \mathring{W}_p^1$-norm of functions from the  space  $\Ha$.  We explicitly  construct linear sampling  methods on sparse grids $R_\beta(m,\cdot)$ and  prove some  estimate  explicit in $d$ and $m$ of the error of the approximation by these sampling operators.


\subsection{Function spaces}\label{sec-sub-function}

Unlike the univariate Sobolev,  H\"older-Zygmund and Besov spaces which have   similar approximation properties, the multivariate Sobolev, H\"older and Besov spaces of mixed smoothness have very different approximation properties. Hence methods of approximation based on hyperbolic crosses and sparse grids (in particular, the choice of proper hyperbolic crosses and sparse grids) and convergence rates of the approximation error and computation complexities of  approximation of functions from these spaces are also different.
We refer the reader to \cite{Tem18B,Dung11a,Dung11b,Dung16,DTU18B}    for  surveys and bibliography on various aspects of approximation of functions having mixed smoothness and applications.

In this  subsection we introduce the H\"older-Zygmund  space $\Ha$ of functions having mixed smoothness $\alpha >0$  for $0 < \alpha \le 2$. There are several definitions of  H\"older-Zygmund spaces which are currently in use. 
These definitions are equivalent with certain restrictions on the parameters. Traditional approximation problems where the dimension $d$ (the number of variables) is small and fixed the convergence rate of approximation error and the computation complexity with respect to different equivalent  norms differ by only moderate constants. The picture completely changes for high-dimensional approximation problems when we stress the accurate $d$-dependence in evaluation of these quantities. In fact, it is essentially depends on the choice of a norm defining a class of functions to be approximated and a norm measuring the approximation error (cf. \cite{NoWo08,DTU18B,DU13,KSU15,ChD16} for detailed discussions).  In this paper, we introduce  H\"older-Zygmund spaces by using  differences.   This definition has been used by many authors,  see, \cite{DTU18B,Ni75B,Tem93B,Tem18B} for details and bibliography.

For a univariate function $f$ on $\II:=[0,1]$, the  second difference operator $\Delta_h^2$ is defined by 
\begin{equation*}
\Delta_h^2(f,x) :=   f(x + 2h)-2f(x+h)+f(x),
\end{equation*}
for all $x$ and $h \ge 0$ such that $x, x +2h \in \II$.
If $u$ is any subset of $[d]:=\{ 1,\ldots,d\}$, for a multivariate function $f$ on $\IId$
the mixed $(2,u)$th difference operator $\Delta_\bh^{2,u}$ is defined by 
\begin{equation*}
\Delta_\bh^{2,u} := \
\prod_{i \in u} \Delta_{h_i}^2, \quad \Delta_\bh^{2,\varnothing} = {\rm Id},
\end{equation*}
where the univariate operator
$\Delta_{h_i}^2$ is applied to the univariate function $f$ by considering $f$ as a 
function of  variable $x_i$ with the other variables held fixed, and {\rm Id} is the identity operator.

	 Let $0<\alpha\leq 2$ be given.  The H\"older-Zygmund space 
		$\Ha$ of mixed smoothness $\alpha$ is defined as the set of  all functions $f \in C(\IId)$ 
		for which the  norm 
	\begin{equation} \label{eq:def-norm}
		\|f\|_{\Ha}
		:= \ 
		\max_{u \subset [d]} \bigg\{\sup_{\bh} \ \prod_{i \in u} h_i^{-\alpha}\|\Delta_\bh^{2,u}(f)\|_{C(\IId(\bh,u))}\bigg\}
	\end{equation}
	is finite, where 
	$\IId(\bh,u):= \{\bx \in \IId: \, x_i + 2h_i \in \II, \, i \in u\}$  and for a subset $\Omega$ in $\IId$ and a continuous function $f$ on $\IId$ the norm $\|f\|_{C(\Omega)}$ is defined by 
		\begin{equation*} \label{C-norm}
		\|f\|_{C(\Omega)}
		:= \ 
		\sup_{\bx \in \Omega} |f(\bx)|. 
		\end{equation*}
	Note, that when $u=\varnothing$ the term in brackets of \eqref{eq:def-norm} is $ \|f\|_{C(\IId)}$.
	 We say that a  function  $f$ on $(\IId)$ satisfies the homogeneous boundary condition if $f(\bx) =0$ if  $x_j=0$ or $x_j=1$  for some index $j \in [d]$, i. e., $f$ vanishes in the boundary of the cube $\IId$.
	We define by ${\mathring{H}_\infty^\alpha}$ the subspace 
	of all functions $f$ in  $\Ha$ satisfying the homogeneous boundary condition. 

By the definition we have the inclusions $\Ha \subset C(\IId) \subset L_p(\IId)$ for $0 <p \le \infty$. Moreover, it is well-known that  for  $\alpha=1,2$, the space $H_\infty^\alpha(\II)$  coincides with the Sobolev space $W_\infty^\alpha(\II)$. Hence, for  $\alpha=1,2$ by a tensor product argument one can deduce that the space $\Ha $ coincides with the space of all functions $f\in L_\infty(\IId)$ such that   mixed derivatives $\partial^{\bbeta}f$ with $\bbeta\in \NN_0^d$, $|\bbeta|_\infty \leq \alpha$,  belong  to $L_\infty(\IId)$.   For further properties of these spaces such as embeddings, characterization by wavelets, atoms or B-splines, we refer the reader to \cite{Ni75B,ST87B,Vy06,Dung16, DTU18B} and references there.

Since the results in \cite{MoDu19,Suzu18,Ya17b,GKP20} are closely related to  the present paper, we make some comments on $\Ha$ and spaces defined in these papers. First of all, it is clear that the isotropic classes considered in \cite{Ya17b,GKP20} are totally different from the classes of mixed smoothness in this paper and in  \cite{MoDu19,Suzu18}, except for $d=1$. 		
 Notice that the space  ${\mathring{H}_\infty^2}(\IId)$ coincides with the space $X^{\infty,2}$ of mixed smoothness $2$ which was considered  in \cite{MoDu19}. 	In case $\alpha\not \in \NN$, the space $\Ha$ coincides with the Besov space of mixed smoothness $B^\alpha_{\infty,\infty}(\IId)$  in \cite{Suzu18}.

%
%

In the present paper, we investigate the   deep  ReLU neural network  approximation  of functions  from  H\"older-Zygmund  space $\Ha$ of mixed smoothness $\alpha$, when the dimension $d$ may be very large. 	The approximation error  is measured in the norm of  the  isotropic Sobolev space  $ \mathring{W}_p^1:=\mathring{W}_p^1(\IId)$.  The  space  ${\mathring{W}_p^1}$, $1\leq p\leq \infty$, consists of all functions $f\in L_p(\IId)$   satisfying the homogeneous boundary condition (in the sense of trace) such that  the  norm
	\begin{equation*} 
		\|f\|_{{ \mathring{W}_p^1}}: =
		\begin{cases}
			\bigg( \displaystyle\sum_{i=1}^d \int_{\IId}\bigg|\frac{\partial}{\partial x_i} f(\bx)\bigg|^p \dd \bx\bigg)^{1/p}, & 1\leq p < \infty, \\
			\displaystyle\max_{1 \le i \le d}\, \esssup_{\bx \in \IId}  \Big|\frac{\partial}{\partial x_i} f(\bx)\Big|, & p = \infty,
		\end{cases}
	\end{equation*}
	is finite (this is a norm due to the Poincar\'e inequality).  
	
%

\subsection{High-dimensional sparse-grid sampling recovery}\label{subsec-sparse-grid}

We start  with introducing the tensorized Faber  basis.
Let  $M_2(x)\ :=  \max (0, 1 - |x-1|)$, $x \in  \RR$, be  the hat function (the piece-wise linear B-spline with knots at $0,1,2$).
For $k\in \NNn$ we define the functions $\varphi_{k,s}$  on $\II$  by
\begin{equation*}\label{eq:faber1}
\varphi_{k,s}(x):=
M_2(2^{k+1}x - 2s), \  x \in \II, \quad k \geq 0,  \ s \in Z(k):=\{0,1,\ldots, 2^{k} - 1\},
\end{equation*}
and
\begin{equation*}\label{eq:faber2}
\varphi_{-1,s}(x) := M_2(x - s + 1),\  x \in \II, \quad  s\in Z(-1):=\{0,1\}.
\end{equation*}
	The system
$
\big\{\varphi_{k,s}: k \in \NNn,\, s\in Z(k)\big\}
$ is the classical Faber basis for $C(\II)$, see, e. g., \cite[Section 5.5]{DTU18B}.

Put $Z^d(\bk):={\mathlarger{\mathlarger{\mathlarger{\mathlarger{\times}}}}}_{i=1}^d Z(k_i)$. 
For $\bk \in \NNdn$, $\bs \in Z^d(\bk)$, define the $d$-variate  tensor product hat functions
\begin{equation} \label{hat-function}
\varphi_{\bk,\bs}(\bx)
\ := \
\prod_{i=1}^d \varphi_{k_i,s_i}(x_i),\quad \bx\in \IId.
\end{equation}

Notice that the hat function  $M_2$ can be represented as a linear combination of three ReLU functions:
	\[
	M_2(x) \ = \ \sigma (x) - 2\sigma(x - 1) + \sigma (x-2).
	\]
This explains why the tensor product hat functions $\varphi_{\bk,\bs}$ are extremely well-suited to approximation by deep ReLU neural networks (see Lemma \ref{lem:product} below).	{It is worth also to mention that the univariate Faber basis isolates ``teeth" of the sawtooth functions $f_{{\rm m}}^k$ used  in study of benefits of deep ReLU networks  in \cite{Te15}.}

For a univariate function $f$ on $\II$, 
$k \in \NNn$, and $s\in Z(k)$ we define
\begin{equation*} 
\lambda_{k,s}(f) \ := 
- \frac {1}{2} \Delta_{2^{-k-1}}^2 f\big(2^{-k}s\big), \ k \ge 0, \quad 
\lambda_{-1,s}(f) \ := f(s).
\end{equation*}
We also define the linear functionals $\lambda_{\bk,\bs}$ for multivariate function $f$ on $\IId$, $\bk\in \NNdn$, and $\bs\in Z^d(\bk)$ by 
\begin{equation*} 
\lambda_{\bk,\bs}(f) \ := 
\prod_{i=1}^d \lambda_{k_i,s_i}(f),
\end{equation*}
where the univariate functional $\lambda_{k_i,s_i}$ is applied to the univariate function $f$ by considering $f$ as a function of variable $x_i$ with the other variables held fixed.

We have the following  representation by Faber series for continuous functions on $\IId$ (see \cite[Section 4]{Dung11a} and \cite[Theorem 3.10]{Tri10B}).
\begin{lemma} \label{lemma[convergence(d)]}
	The Faber system
	$
	\big\{\varphi_{\bk,\bs}: \, \bk \in \NNdn,\, \bs\in Z^d(\bk)\big\}
	$ is a basis in $C(\IId)$. Moreover, any function 
	$f \in C(\IId)$ can be represented by the Faber series 
	\begin{equation*} \label{eq:FaberRepresentation}
	f
	\ = \
	\sum_{\bk \in \NNdn} q_\bk(f) 
	:= \
	\sum_{\bk \in \NNdn} \sum_{\bs\in Z^d(\bk)} \lambda_{\bk,\bs}(f)\varphi_{\bk,\bs}, 
	\end{equation*}
	converging in the norm of $C(\IId)$.
\end{lemma}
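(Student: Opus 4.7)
The plan is to reduce the multivariate statement to the univariate case via a tensor product argument. For $d=1$, I would show that the truncated sum
\[
S_n f \ := \ \sum_{k=-1}^{n}\sum_{s \in Z(k)} \lambda_{k,s}(f)\,\varphi_{k,s}
\]
coincides with the continuous piecewise linear function interpolating $f$ at every node of the uniform dyadic grid $\{j\, 2^{-n-1}: 0\leq j \leq 2^{n+1}\}$. I would prove this by induction on $n$: the base case $n=-1$ gives the affine function joining $f(0)$ and $f(1)$. For the inductive step, rewriting
\[
\lambda_{k,s}(f) \ = \ f\bigl(2^{-k-1}(2s+1)\bigr) \ - \ \tfrac{1}{2}\bigl[f(2^{-k}s) + f(2^{-k}(s+1))\bigr]
\]
shows that this coefficient is exactly the gap between $f$ and the previous interpolant at the new midpoint $2^{-k-1}(2s+1)$. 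Since $\varphi_{k,s}$ equals $1$ at that midpoint and vanishes at all coarser grid nodes, adding the level-$k$ contribution refines the interpolant to match $f$ on the finer grid. Uniform convergence $\|f - S_n f\|_{C(\II)}\to 0$ is then immediate from the uniform continuity of $f$, since $\|f - S_n f\|_{C(\II)} \leq \omega(f, 2^{-n-1})$.

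For arbitrary $d$, the tensor structure $\varphi_{\bk,\bs}(\bx)=\prod_{i=1}^d \varphi_{k_i,s_i}(x_i)$ and $\lambda_{\bk,\bs}(f)=\prod_{i=1}^d \lambda_{k_i,s_i}(f)$ lets me identify the level-$n$ full-grid truncation
\[
S_n^d f \ := \ \sum_{\bk \in \NNdn:\, k_i \leq n\ \forall i} q_\bk(f)
\]
with the composition $T_1 \circ \cdots \circ T_d$, where $T_j$ denotes the univariate operator $S_n$ applied in the $j$th coordinate with the other variables held fixed. Using the telescoping identity
\[
\mathrm{Id} - T_1 \cdots T_d \ = \ \sum_{j=1}^d T_1 \cdots T_{j-1}\,(\mathrm{Id} - T_j),
\]
I would bound each summand in $C(\IId)$ via the contractivity of piecewise linear interpolation on $C(\II)$ (so each $T_j$ is a contraction) together with the fact that $\|(\mathrm{Id} - T_j)g\|_{C(\IId)}$ is controlled by the partial modulus of continuity of $g$ in the $j$th variable, uniform in the other coordinates by the Heine--Cantor theorem applied to the compact set $\IId$. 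This yields $\|f - S_n^d f\|_{C(\IId)}\to 0$ as $n \to \infty$.

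The main subtlety is choosing the correct summation order: the multivariate Faber series need not converge absolutely in $C(\IId)$, so convergence must be interpreted through the nested level-$n$ partial sums $S_n^d f$, for which the tensor-product identification is valid. Uniqueness of the expansion then follows because for each fixed $n$ the finite system $\{\varphi_{\bk,\bs}: k_i \leq n\ \forall i,\ \bs\in Z^d(\bk)\}$ is a hierarchical basis for the space of continuous functions on $\IId$ that are piecewise multilinear on the tensor grid, so the coefficients of the truncation are uniquely determined by $f$; letting $n\to\infty$ exhibits the Faber system as a Schauder basis of $C(\IId)$ under this ordering.
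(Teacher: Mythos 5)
Your proposal is essentially correct, but note that the paper does not prove this lemma at all: it is quoted with references to \cite[Section 4]{Dung11a} and \cite[Theorem 3.10]{Tri10B}, so there is no in-paper argument to compare against. What you have written is, in substance, the classical proof that those references contain. Your univariate identification is right: since
$\lambda_{k,s}(f) = f\bigl(2^{-k-1}(2s+1)\bigr) - \tfrac12\bigl[f(2^{-k}s)+f(2^{-k}(s+1))\bigr]$
is the hierarchical surplus at the new midpoint, and $\varphi_{k,s}$ equals $1$ there and vanishes at all dyadic nodes $j2^{-k}$ (because $M_2$ vanishes at even integers), the induction showing that $S_nf$ is the piecewise linear interpolant on the grid of mesh $2^{-n-1}$ goes through, and $\|f-S_nf\|_{C(\II)}\le \omega(f,2^{-n-1})$ is the standard bound. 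The tensorization $S_n^df=T_1\cdots T_df$ is valid because the coefficients and basis functions both factorize and the $T_j$ act on disjoint variables, and the telescoping identity combined with $\|T_j\|_{C\to C}=1$ and uniform continuity of $f$ on the compact cube gives $\|f-S_n^df\|_{C(\IId)}\to 0$. The one place where your sketch is thinner than a full proof is the final claim that this ``exhibits the Faber system as a Schauder basis'': convergence of the block partial sums $S_n^df$ does not by itself give a Schauder basis, which requires an enumeration of the system as a single sequence together with uniform boundedness of \emph{all} initial-segment projections. This is repairable (within a fixed level $\bk$ the functions $\varphi_{\bk,\bs}$, $\bs\in Z^d(\bk)$, have pairwise disjoint open supports, so intermediate partial sums inside a block are controlled by the block endpoints), and it is exactly the technical point handled in Triebel's treatment, but as written your last paragraph asserts it rather than proves it. Apart from that, the argument is sound and self-contained where the paper simply defers to the literature.
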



Denote by  $\Uas$ the unit ball in ${\mathring{H}_\infty^\alpha}$.  If $f \in \Uas$, we can write
\begin{equation*}
f \ = \ \sum_{\bk \in \NNd_0} q_\bk(f)   
\end{equation*}
with unconditional convergence in $C(\IId)$, see \cite[Theorem 3.13]{Tri10B}.
In this case we have
\begin{equation*} 
\lambda_{\bk,\bs}(f) \ := \prod_{i = 1}^d
\bigg(- \frac {1}{2} \Delta_{2^{-k_i-1}}^2 f\big(2^{-\bk}\bs\big)\bigg)
\end{equation*}
and  by \eqref{eq:def-norm} it holds the following estimate
\begin{equation}\label{eq:estimate}
|\lambda_{\bk,\bs}(f)|\leq 2^{ -(\alpha + 1)d}2^{- \alpha|\bk|_1},\quad \bk\in \NNd_0, \ \bs\in Z^d(\bk).
\end{equation}

We now construct sparse grids and sampling operators on them for approximately recovering  functions in $\Uas$ from their values on these grids. For $\beta\geq 1$ and $m\in \N$, we define the sets of multi-indices
\begin{equation*}
\Delta^d_\beta(m) : =\big\{\bk\in \NNd_0:\, |\bk|_1=m-j,\ |\bk|_\infty\geq m-\lfloor\beta j\rfloor\quad \text{for} \quad j=0,\ldots ,m\big\}, 
\end{equation*}   
and 
\begin{equation}  \nonumber
D^d_\beta(m)
:= \big\{ (\bk,\bs): \bk\in \Delta^d_\beta(m), \ \bs\in Z^d(\bk)\big\}.
\end{equation}
The definition of $\Delta^d_\beta(m)$ is  similar to  $ \Delta(\alpha,\beta;\xi)=\big\{\bk\in \NNd_0: \ \alpha |\bk|_1-\beta|\bk|_\infty \leq \xi\big\}$ introduced in \cite{BDSU16} but simpler. 
We also put 
\begin{equation*}
\Delta^d(m) : =\big\{\bk\in \NNd_0:  |\bk|_1  \leq m \big\}
\end{equation*}
which corresponds to the well-known Smolyak grid. It is obvious that $\Delta^d_\beta(m)$ is a subset of $ \Delta^d(m)$ for all $\beta\geq 1$.

\begin{figure}
	
	\begin{tabular}{ccc}
			\begin{tikzpicture}
	\foreach \x in {0,...,20}
	\foreach \y in {0,...,20}
	\fill (\x*0.2,  \y*0.2) circle (2pt);
	\draw [->,  thick] (0,0) to (4.4,0);
	\draw [->,  thick] (0,0) to (0,4.4);
	\node at (4.3, -0.4) {$k_1$};
	\node at (-0.4,4.2) {$k_2$};
	\end{tikzpicture}
	&

		\begin{tikzpicture}
\foreach \m in {0,...,20}
\foreach \x in {0,...,\m}
\fill (\x*0.2, \m*0.2-\x*0.2) circle (2pt);
	\draw [->,  thick] (0,0) to (4.4,0);
\draw [->,  thick] (0,0) to (0,4.4);
\node at (4.3, -0.4) {$k_1$};
\node at (-0.4,4.2) {$k_2$};
	\end{tikzpicture}
	&
\begin{tikzpicture}

\foreach \j in {0,...,7}
\foreach \y in {0,...,\j}{
	\fill (\y*0.2, 20*0.2-\j*0.2-\y*0.2) circle (2pt);
	\fill ( 20*0.2-\j*0.2-\y*0.2, \y*0.2) circle (2pt);
}
\foreach \m in {0,...,13}
\foreach \x in {0,...,\m}
\fill (\x*0.2, \m*0.2-\x*0.2) circle (2pt);
	\draw [->,  thick] (0,0) to (4.4,0);
\draw [->,  thick] (0,0) to (0,4.4);
\node at (4.3, -0.4) {$k_1$};
\node at (-0.4,4.2) {$k_2$};
\end{tikzpicture}
\end{tabular}
\caption{ Illustration of different sets of multi-indices in $\NN_0^2$. The left graph is  $\{\bk \in \NN_0^2: |\bk|_\infty \leq 20\}$, the  middle is $\Delta^2(20)$ and the right  is $\Delta_{2}^2(20)$.}
		\label{fig:grids}

\end{figure}
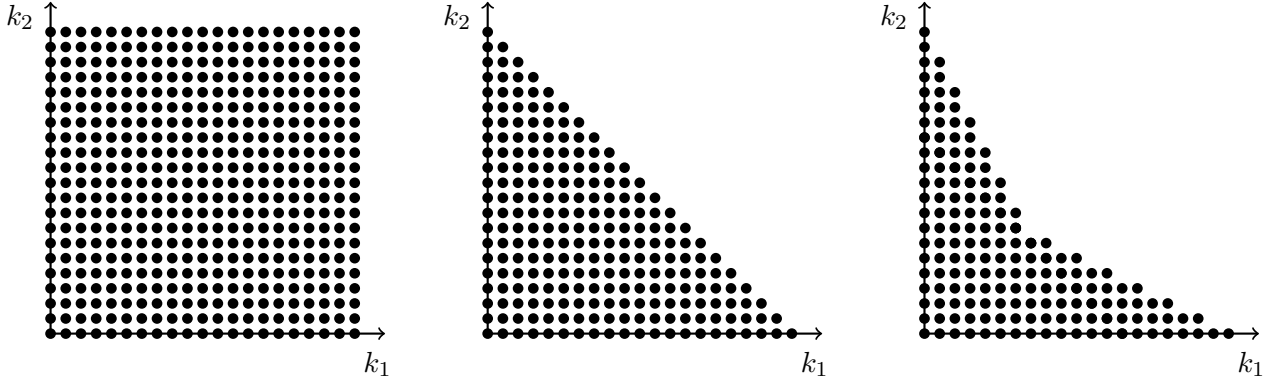
Consider the  operator
\begin{equation}\label{Rbm}
R_{\beta}(m,f) 
:= 
\sum_{\bk \in \Delta^d_\beta(m)} q_\bk(f)= 
\sum_{\bk \in \Delta^d_\beta(m)} \ \sum_{\bs\in Z^d(\bk)} \lambda_{\bk,\bs}(f)\varphi_{\bk,\bs},
\end{equation}
defined for $f \in \Uas$, and the energy-norm-based grid
\begin{equation}  \nonumber
G^d_\beta(m)
:= \big\{ 2^{-\bk}\bs:\ \bk\in \Delta^d_\beta(m) ,\ \bs  \in Z^d(\bk +\boned)\},\qquad 
\boned :=(1,\ldots,1)\in \RR^d.
\end{equation}


 The operator $R_{\beta}(m,\cdot)$  represents a sampling recovery method for functions from $\Uas$.  We notice some important properties of the operator $R_{\beta}(m,\cdot)$ and the grid $G^d_\beta(m)$.  The function $R_{\beta}(m,f)$ is a truncated Faber series of $f$.  It is completely determined by the sampling values of $f$ on the grid $G^d_\beta(m)$. Moreover,  $R_{\beta}(m,f)$ interpolates $f$ at the points of $G^d_\beta(m)$, i.e.,
\begin{equation}\nonumber
R_{\beta}(m,f)(\bx) 
= 
f(\bx), \quad \bx \in G^d_\beta(m).
\end{equation}
 As shown in what follows, with an appropriate choice of parameter $\beta$, the function $R_{\beta}(m,f)$ is suitable to approximately recovering the function $f$  in $\Uas$ from the sample values on the grid $G^d_\beta(m)$.

{We give a dimension-dependent error estimate of the approximation of a function} $f \in \Uas$ by the sampling operator $R_{\beta}(m,\cdot)$.

\begin{theorem} \label{thm:approx}
	Let $d \geq 2$,  $1 < \alpha \leq  2$,  $\beta>\alpha$, and $1\le p \le\infty$. Then for  every $f \in \Uas$  we have 
	\begin{equation}  \label{f-R}
	\|f -R_{\beta}(m,f) \|_{{ \mathring{W}_p^1}}
	\le K_1
	\frac{{ d^{2}}2^{-m(\alpha-1)}} { (p+1)^{\frac{d}{p}}2^{(\alpha + 1)d}\big(1-  2^{-\frac{\beta-\alpha}{\beta-1}}\big)^{d}}, 
	\end{equation}
	where $K_1=K_1(\alpha,\beta,p)=2(p+1)^{\frac{1}{p}} \max\big\{\frac{2\beta}{\beta-1},\frac{1}{2^{\alpha-1}-1}\big\}$.
\end{theorem}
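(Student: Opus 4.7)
The plan is to expand the error as the tail of the Faber series of $f$, estimate each frequency block $q_\bk(f)$ in $\mathring W_p^1$ via the explicit coefficient bound \eqref{eq:estimate}, and then control the tail sum by a geometric estimate tailored to match the factor $(1-2^{-(\beta-\alpha)/(\beta-1)})^{-d}$.

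First I would write $f - R_\beta(m,f) = \sum_{\bk \notin \Delta_\beta^d(m)} q_\bk(f)$, which is legitimate because the Faber series of $f \in \Uas$ converges unconditionally in $C(\IId)$. A direct computation using $M_2(x) = \sigma(x)-2\sigma(x-1)+\sigma(x-2)$ and $\int_0^2 M_2(u)^p\,du = 2/(p+1)$ yields the univariate identity $\|\varphi'_{k,s}\|_{L_p(\II)}^p = 2^{kp+p-k}$; the tensor product structure of $\varphi_{\bk,\bs}$ then gives
\[
\|\partial_{x_j}\varphi_{\bk,\bs}\|_{L_p(\IId)}^p \ = \ \frac{2^{k_jp+p-|\bk|_1}}{(p+1)^{d-1}}.
\]
Combined with the disjointness of $\{\supp\varphi_{\bk,\bs}\}_{\bs \in Z^d(\bk)}$ at each fixed $\bk$, the coefficient estimate $|\lambda_{\bk,\bs}(f)| \le 2^{-(\alpha+1)d}2^{-\alpha|\bk|_1}$, and the identity $|Z^d(\bk)| = 2^{|\bk|_1}$, this gives
\[
\|\partial_{x_j}q_\bk(f)\|_{L_p(\IId)} \ \le \ \frac{2 \cdot 2^{-(\alpha+1)d}}{(p+1)^{(d-1)/p}}\cdot 2^{k_j - \alpha|\bk|_1}.
\]

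Next I would assemble the $\mathring W_p^1$-norm. The triangle inequality inside each $\|\partial_{x_j}(\cdot)\|_{L_p}$ summed over $\bk$, the elementary inequality $(\sum_j a_j^p)^{1/p} \le \sum_j a_j$ applied to the $d$-vector of directional derivatives, and the invariance of $\Delta_\beta^d(m)$ under coordinate permutations reduce the problem to
\[
\|f - R_\beta(m,f)\|_{\mathring W_p^1} \ \le \ \frac{2d \cdot 2^{-(\alpha+1)d}}{(p+1)^{(d-1)/p}}\cdot \Sigma,\qquad \Sigma := \sum_{\bk \notin \Delta_\beta^d(m)} 2^{k_1 - \alpha|\bk|_1},
\]
so it suffices to prove $\Sigma \le \max\{2\beta/(\beta-1),\,1/(2^{\alpha-1}-1)\}\cdot d \cdot 2^{-m(\alpha-1)}\,(1-2^{-(\beta-\alpha)/(\beta-1)})^{-d}$.

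Third, I would split $\NN_0^d \setminus \Delta_\beta^d(m)$ into regime (I) $|\bk|_1 \ge m+1$ and regime (II) $|\bk|_1 \le m$ with $k_j < m - \lfloor\beta(m-|\bk|_1)\rfloor$ for every $j$. Using $\lfloor x\rfloor \ge x-1$ in (II) gives the per-coordinate bound $k_i \le (1-\beta)m + \beta|\bk|_1$ and hence $|\bk'|_1 \ge (\beta-1)(m-k_1)/\beta$ for $\bk':=(k_2,\ldots,k_d)$; in (I) one has trivially $|\bk|_\infty \le |\bk|_1$. Set $\gamma := (\beta-\alpha)/(\beta-1) \in (0,1)$. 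For (I), I would factor the sum over $k_1$ out, use the unrestricted $\sum_{\bk'\in\NN_0^{d-1}}2^{-\alpha|\bk'|_1} = (1-2^{-\alpha})^{-(d-1)} \le (1-2^{-\gamma})^{-(d-1)}$ in the remaining coordinates, and close a geometric sum in $k_1 \ge m+1$ that contributes the constant factor $1/(2^{\alpha-1}-1)$ and the decay $2^{-m(\alpha-1)}$. For (II), I would apply the key inner-sum estimate $\sum_{|\bk'|_1 \ge t}2^{-\alpha|\bk'|_1} \le 2^{-(\alpha-\gamma)t}(1-2^{-\gamma})^{-(d-1)}$ (with $\alpha-\gamma = \beta(\alpha-1)/(\beta-1)$), then reorganize the outer sum by $r = m-|\bk|_1 \in [0,\lfloor m/\beta\rfloor]$ so that, after the exponent matching inside the product with $2^{k_1(1-\alpha)}$, what remains is a geometric series in $r$ with ratio $2^{-(\beta-\alpha)}$; this produces the constant $1/(1-2^{-(\beta-\alpha)}) \le 2\beta/(\beta-1)$ and the final $2^{-m(\alpha-1)}$ decay.

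The main obstacle will be the regime-(II) estimate. A naive one-parameter dominance $\mathbf{1}[\bk \notin \Delta_\beta^d(m)] \le 2^{\tau(\beta|\bk|_1 - |\bk|_\infty - (\beta-1)m)}$ diverges at precisely the optimal $\tau = (\alpha-1)/(\beta-1)$, because this choice makes $1-2^{1+\tau(\beta-1)-\alpha} = 0$; the argument must therefore switch to the per-coordinate reformulation and organize the $\bk$-sum by $r$ rather than by a single weighted indicator. Careful bookkeeping is then needed to collect all constants into the stated form $K_1 = 2(p+1)^{1/p}\max\{2\beta/(\beta-1),\,1/(2^{\alpha-1}-1)\}$ and to confirm that regimes (I) and (II) share the same dimension-dependent factor $(1-2^{-(\beta-\alpha)/(\beta-1)})^{-d}$.
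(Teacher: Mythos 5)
Your setup is sound and matches the paper's: the reduction to $\sum_{\bk\notin\Delta^d_\beta(m)}\|q_\bk(f)\|_{\mathring{W}_p^1}$, the blockwise bound $\|\partial_{x_j}q_\bk(f)\|_{L_p}\le 2(p+1)^{-(d-1)/p}2^{-(\alpha+1)d}2^{k_j-\alpha|\bk|_1}$ (this is exactly Lemma \ref{lem:qk-in-Wp}), and the reduction to the scalar sum $\Sigma$ are all correct, as is your inner-sum estimate $\sum_{|\bk'|_1\ge t}2^{-\alpha|\bk'|_1}\le 2^{-(\alpha-\gamma)t}(1-2^{-\gamma})^{-(d-1)}$. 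The gaps are in the two tail estimates. In regime (I) you only close the geometric sum over $k_1\ge m+1$, but regime (I) is $|\bk|_1\ge m+1$; the indices with $k_1\le m$ and $|\bk'|_1$ large are dropped, and for large $d$ they carry most of the mass of $\sum_{|\bk|_1=\ell}2^{k_1}=\sum_{k_1}2^{k_1}\binom{\ell-k_1+d-2}{d-2}$, since the multiplicity is largest at small $k_1$. This piece is fixable --- the paper's Lemma \ref{lem:p-induction} gives $\sum_{|\bk|_1=\ell}2^{k_1}\le 2^{\ell+d-1}$ and the resulting $2^{d-1}$ is absorbed into $(1-2^{-\gamma})^{-d}$ via $2(1-2^{-\gamma})\le 1$ --- but as written it is missing.

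Regime (II) is the real problem. If you organize by $k_1$ and apply the inner-sum estimate with $t=(\beta-1)(m-k_1)/\beta$, the exponent cancellation you describe is exact: every admissible value of $k_1$ contributes the same $2^{-(\alpha-1)m}(1-2^{-\gamma})^{-(d-1)}$, so the outer sum over the up to $m+1$ values of $k_1$ costs an extra factor of $m$, which the stated bound does not permit. If instead you organize by $r=m-|\bk|_1$, the quantity multiplying $2^{-(\beta-\alpha)r}$ is not a constant: for fixed $r$ the count of admissible $\bk$ carries a binomial factor $\binom{d-1+\lfloor(\beta-1)r\rfloor+\ell}{d-2}$ growing polynomially in $r$ of degree $d-2$, so what remains is not a geometric series, and extracting exactly $(1-2^{-\gamma})^{-d}$ from it requires the resummation $\sum_{j\ge 0}\binom{d-1+j}{d-2}(j+1)x^j=(d-1)(1-x)^{-d}$ together with the multiplicity count $|\{j:\lfloor(\beta-1)j\rfloor=k\}|<\beta/(\beta-1)$; this is the heart of the paper's argument (following Bungartz--Griebel) and is absent from your plan. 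Finally, the inequality $1/(1-2^{-(\beta-\alpha)})\le 2\beta/(\beta-1)$ you invoke for the constant is false: as $\beta\downarrow\alpha$ the left side blows up while the right side tends to $2\alpha/(\alpha-1)$. The factor $\beta/(\beta-1)$ in $K_1$ comes from the multiplicity count just mentioned, not from a geometric series with ratio $2^{-(\beta-\alpha)}$.
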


A proof of this theorem is given  in Subsection \ref{proof of thm} in Appendix \ref{appendix}.

\begin{remark} \label{sparsity}
	{\rm
	For approximation of functions from $\Uas$,	we could take the sampling operator 
	$R_{\square}(m,f):= \sum_{|\bk|_\infty\le m} q_\bk(f)$ on 
	the  traditional standard grid 
$$
	G^d_\square(m):= \big\{ 2^{-\bk}\bs: |\bk|_\infty  = m, \  \bs \in Z^d(\bk+ \boned)\big\},
	$$ 
	and the sampling operator $R_{\triangle}(m,f):= \sum_{|\bk|_1 \le m} q_\bk(f)$ on the classical Smolyak grid 
	$$
	G^d_\triangle(m):= \big\{ 2^{-\bk}\bs:  |\bk|_1  = m, \  \bs \in Z^d(\bk+\boned)\big\}.
	$$
	 It is easy to verify that the error of approximation in the ${\mathring{W}_p^1}$ norm of $f \in \Uas$ by  $R_{\square}(m,f)$ or $R_{\triangle}(m,f)$ is the same as  by $R_{\beta}(m,f)$. On the other hand, the sparsity of   the grid $G^d_\beta(m)$ in the operator $R_{\beta}(m,f)$, is much higher than the sparsity of the grids $G^d_\square(m)$ and $G^d_\triangle(m)$, see the estimate of  $|G^d_\beta(m)|$ in Lemma \ref{card} in comparing with
	$|G^d_\square(m)| \approx 2^{dm}$ and $|G^d_\triangle(m)| \approx 2^{m+d} \binom{m+d-1}{d-1}$.  The sparse grids $G^d_\triangle(m)$ and related functions $R_{\triangle}(m,f)$ have been used in \cite{MoDu19, Suzu18} in construction of approximation by deep ReLU neural networks. In the next section, we will apply the sparse grids $G^d_\beta(m)$ and related functions $R_{\beta}(m,f)$ to approximation in $\mathring{W}^1_p$-norm  by deep ReLU neural networks of functions from $\Uas$.
}
\end{remark}
	\begin{remark} \label{rmk3.3}
		{\rm
	Some results similar to \eqref{f-R} were obtained in \cite[Theorem 3.8]{BuGr04} for the approximation in energy norm  of functions $f \in X^{p,2}$ with boundary zero value and mixed derivatives $\partial^\balpha f$, $|\balpha|_\infty\leq 2$,  bounded $L_p(\IId)$ for $p = 2, \infty$.  But the construction of sparse grids for approximation as well as high-dimensional technique of estimation of the approximation error cannot be applied to our case for the the mixed smoothness $1 < \alpha <2$. The authors of \cite{MoDu19} used the results and technique  of  	\cite{BuGr04}  in the $L_\infty(\IId)$-approximation by deep ReLU neural networks.
}
\end{remark}

\section{Approximation by deep ReLU neural networks}\label{sec:dnn}
{In this section, we will apply  the results on  sparse-grid sampling recovery in the previous section to the  approximation by deep  ReLU neural networks of functions from $\Uas$.}
For every $\varepsilon >0$ and  every $f \in \Uas$,  we will explicitly construct  a deep  ReLU neural network  $\Phi_f$ having an architecture ${\mathbb A}_\varepsilon$  independent of $f$, and the output $\mathcal{N}(\Phi_f,\cdot)$  which approximates $f$ in  the norm  of the isotropic Sobolev space { $\mathring{W}_p^1$} with accuracy $\varepsilon$, and give dimension-dependent upper bounds for the  size and the depth of $\Phi_f$. We also give some lower bounds for the  size of  deep ReLU neural network necessary for this approximation {in the case when $p=\infty$}. Up to logarithmic term our result for the  required size is optimal.

\subsection{Upper evaluation}
For upper evaluation of size of $\Phi_f$, our strategy is to use {the truncated Faber series}  $R_\beta(m,f)$ in Theorem \ref{thm:approx} as an  intermediate approximation, and then construct a deep  ReLU neural network  $\Phi_f$   for approximating  this sum by the output $\mathcal{N}(\Phi_f,\cdot)$. Since $R_\beta(m,f)$ is a sum of tensor products of hat functions, first of all we will  process the approximation {of such tensor products by deep  ReLU neural networks. This can be done based on the following lemma \cite[Proposition 2.6]{OSZ19}} on approximating by deep ReLU neural networks the product of $d$ numbers.

\begin{lemma} \label{prop:multi}
	{Let  $d\geq 2$ and $0 <\delta < 1$. Then}  we can explicitly construct  a deep  ReLU neural network  $\Phi_P$ such that 
	$$
	\sup_{ \bx \in { \IId}} \Bigg|\prod_{i=1}^d x_i - \mathcal{N}(\Phi_P,\bx) \Bigg| \leq \delta, 
	$$
	and
	$$
	\esssup_{ \bx \in { \IId}}\sup_{j=1,\ldots,d}\Bigg|\frac{\partial}{\partial x_j}\prod_{i=1}^d x_i - \frac{\partial}{\partial x_j}\mathcal{N}(\Phi_P,\bx) \Bigg|\leq \delta,
	$$
	where $\frac{\partial}{\partial x_j}$ denotes a weak partial derivative. Furthermore, there exists a constant $C>0$ independent of $\delta\in (0,1)$ and $d\in \N$ such that
	$$
	W(\Phi_P) \leq C d\log (d\delta^{-1}) 
	\quad \text{and}\quad
	L(\Phi_P)  \leq C\log d\log(d\delta^{-1}) \,.
	$$
	Moreover, if $x_j=0$ for some $j\in [d]$, then $\mathcal{N}(\Phi_P,\bx)=0$. 
\end{lemma}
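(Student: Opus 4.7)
The plan is to build $\Phi_P$ in three stages, following essentially the construction developed in the cited reference. First, I would invoke Yarotsky-type approximation of the squaring function $s(x)=x^2$ on $\II$: there is a ReLU network $\Phi_s^\eta$ of depth and size both $\cO(\log \eta^{-1})$ with $\sup_{x\in\II}|x^2-\mathcal{N}(\Phi_s^\eta,x)|\le \eta$, built from the sawtooth telescoping $x^2\approx x-\sum_{k=1}^n 4^{-k}g_k(x)$, where $g_k$ is the $k$-fold self-composition of the hat $g(x)=2\sigma(x)-4\sigma(x-\tfrac12)+2\sigma(x-1)$. By a standard refinement, the construction can be arranged so that the a.e.\ derivative of $\mathcal{N}(\Phi_s^\eta,\cdot)$ is also $\eta$-close to $2x$, and so that $\mathcal{N}(\Phi_s^\eta,0)=0$ exactly.

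Second, from the polarization identity $xy=\tfrac14\bigl[(x+y)^2-(x-y)^2\bigr]$ I would assemble a bivariate multiplication network $\Phi_M^\eta$ on $\II^2$ as a linear combination of two parallel copies of a symmetrized squaring block (extended to $[-1,1]$ by applying $\Phi_s^\eta$ to $\sigma(\cdot)$ and $\sigma(-\cdot)$ and adding). The symmetrization guarantees $\mathcal{N}(\Phi_M^\eta,(0,y))=\mathcal{N}(\Phi_M^\eta,(x,0))=0$ identically. Both the sup-norm error and the weak-derivative error of $\Phi_M^\eta$ relative to $xy$ are $\cO(\eta)$, and the Lipschitz constant of $\mathcal{N}(\Phi_M^\eta,\cdot)$ is bounded by a universal constant since the inputs lie in $\II$.

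Third, to approximate $\prod_{i=1}^d x_i$, I would arrange a balanced binary tree of depth $\lceil\log_2 d\rceil$ whose leaves are the coordinates $x_1,\ldots,x_d$ and whose internal nodes are copies of $\Phi_M^\eta$ combined by the parallelization and concatenation mechanisms underlying Lemma~\ref{lem:paralell}. Partial products stay in $\II$, so Lipschitz constants remain uniformly bounded throughout the tree. An inductive application of the identity $\tilde u\tilde v-uv=(\tilde u-u)\tilde v+u(\tilde v-v)$ (and its weak-derivative analogue via the product rule) shows that the accumulated sup-norm and weak-derivative errors are both $\cO(d\eta)$. Choosing $\eta\asymp\delta/d$ gives accuracy $\delta$; the depth becomes $\lceil\log_2 d\rceil\cdot\cO(\log(d\delta^{-1}))=\cO(\log d\log(d\delta^{-1}))$ and the total size is $\cO(d)\cdot\cO(\log(d\delta^{-1}))=\cO(d\log(d\delta^{-1}))$. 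Exact zero-preservation propagates up the tree from any vanishing leaf, since every ancestor node of such a leaf receives a zero input to $\Phi_M^\eta$ and therefore outputs zero exactly.

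The main technical obstacle is controlling sup-norm and weak-derivative error simultaneously through $\cO(\log d)$ levels of composition, because ReLU outputs are piecewise linear with gradients that jump across knot hyperplanes. The key observation is that the product rule couples the two errors multiplicatively with factors bounded by $1$, so the total degradation remains linear in $d$ rather than exploding with depth. A secondary subtlety is engineering the squaring block to be exactly even about $0$ so that $\mathcal{N}(\Phi_M^\eta,\cdot)$ preserves zeros; this is the reason for the symmetrized $\sigma(x)+\sigma(-x)$ composition in place of a raw Yarotsky block.
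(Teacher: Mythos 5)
Your construction (Yarotsky squaring via sawtooth telescoping, polarization to get bivariate multiplication with a symmetrized squaring block ensuring exact zero-preservation, then a balanced binary tree with $\eta\asymp\delta/d$) is correct and is precisely the argument the paper relies on: the paper does not prove this lemma itself but cites \cite[Proposition 2.6]{OSZ19} for the $W^{1,\infty}$ bounds and \cite[Proposition 3.1]{ScZe19} for the vanishing property, both of which proceed exactly as you describe. Nothing further is needed.
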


The last statement $\mathcal{N}(\Phi_P,\bx)=0$ ($d=2$) when $x_1\cdot x_2=0$ was proved in \cite[Proposition 3.1]{ScZe19} (see also \cite[Proposition C.2]{GKP20} and \cite[Proposition 4.1]{OPS19}). But this implies that the statement also holds for general $d$ since the network $\Phi_P$ is constructed as an binary tree of the network $\Phi_P$ when $d=2$.
Inspecting the proof of   this lemma (the proofs of \cite[Proposition 2.6]{OSZ19} and  \cite[Proposition 3.3]{ScZe19}), 
we also find that when $\bx \in \IId$ it holds $N_w(\Phi_P)\leq 12 d$. Lemma \ref{prop:multi} yields the following lemma on approximating  tensor products of $d$ hat functions by deep ReLU neural networks.

\begin{lemma} \label{lem:product}
	 Let  $d\geq 2$ and $0 <\delta < 1$. Then  for the $d$-variate hat functions $\varphi_{\bk,\bs}$, $\bk \in \NNd_0$, $\bs \in Z^d(\bk)$,   defined as in \eqref{hat-function}, we can explicitly construct a deep ReLU neural network  $\Phi_{\bk,\bs}$ such that  $\mathcal{N}(\Phi_{\bk,\bs},\cdot)$  approximates $\varphi_{\bk,\bs}$ with accuracy $\delta$, and 
	$$
	N_w(\Phi_{\bk,\bs})\leq Cd,\quad	W(\Phi_{\bk,\bs}) \leq C d\log(d\delta^{-1}),\quad \text{and}\quad L(\Phi_{\bk,\bs} ) \leq C\log d\log(d\delta^{-1})\,.
	$$
	Moreover, $\supp(\mathcal{N}(\Phi_{\bk,\bs},\cdot))\subset \supp (\varphi_{\bk,\bs})$ and
	$$
	\esssup_{\bx \in \IId}\bigg|\frac{\partial}{\partial x_j}\varphi_{\bk,\bs} (\bx ) - \frac{\partial}{\partial x_j}\mathcal{N}(\Phi_{\bk,\bs},\bx) \bigg|\leq  2^{k_j+1}\delta \,.
	$$
\end{lemma}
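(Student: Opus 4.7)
My plan is to build $\Phi_{\bk,\bs}$ as the composition of a small first sublayer that computes the $d$ univariate hat values $\varphi_{k_i,s_i}(x_i)$ exactly, followed by the multiplication network $\Phi_P$ from Lemma \ref{prop:multi} applied to the resulting intermediate vector. The identity $M_2(t) = \sigma(t) - 2\sigma(t-1) + \sigma(t-2)$, already highlighted in the paper, shows that after the affine pre-scaling $t \mapsto 2^{k_i+1}x_i - 2s_i$ each univariate factor $\varphi_{k_i,s_i}$ is exactly realized by a one-hidden-layer ReLU subnetwork with three neurons and output weights $1,-2,1$. Running these $d$ subnetworks in parallel produces, in a single hidden layer of width $3d$, the vector $\bphi(\bx) := (\varphi_{k_1,s_1}(x_1),\ldots,\varphi_{k_d,s_d}(x_d)) \in \IId$ at the cost of $O(d)$ weights and depth one.

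The network $\Phi_{\bk,\bs}$ is then defined by feeding $\bphi(\bx)$ into $\Phi_P$ with accuracy parameter $\delta$, so that $\mathcal{N}(\Phi_{\bk,\bs},\bx)=\mathcal{N}(\Phi_P,\bphi(\bx))$. The pointwise error bound follows directly from Lemma \ref{prop:multi}, because $\bphi(\bx)\in\IId$ and $\varphi_{\bk,\bs}(\bx)=\prod_{i=1}^d\varphi_{k_i,s_i}(x_i)$. The support property is equally immediate: if $\varphi_{\bk,\bs}(\bx)=0$, then $\varphi_{k_j,s_j}(x_j)=0$ for some index $j$, and the last assertion of Lemma \ref{prop:multi} forces $\mathcal{N}(\Phi_P,\bphi(\bx))=0$, so $\supp(\mathcal{N}(\Phi_{\bk,\bs},\cdot))\subset\supp(\varphi_{\bk,\bs})$.

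For the derivative estimate I would invoke the weak chain rule on the composition. At a.e.\ $\bx \in \IId$,
\[
\frac{\partial}{\partial x_j}\mathcal{N}(\Phi_{\bk,\bs},\bx) \;=\; \varphi'_{k_j,s_j}(x_j)\,\frac{\partial \mathcal{N}(\Phi_P,\by)}{\partial y_j}\bigg|_{\by=\bphi(\bx)},
\]
and analogously $\frac{\partial}{\partial x_j}\varphi_{\bk,\bs}(\bx)=\varphi'_{k_j,s_j}(x_j)\prod_{i\neq j}\varphi_{k_i,s_i}(x_i)$. Factoring out the common $\varphi'_{k_j,s_j}(x_j)$, using $|\varphi'_{k_j,s_j}(x_j)|\leq 2^{k_j+1}$ a.e., and combining with the partial-derivative bound from Lemma \ref{prop:multi} yields the claimed $2^{k_j+1}\delta$.

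The complexity budget is essentially that of $\Phi_P$: the preprocessing layer contributes only $O(d)$ weights, width $3d$, and depth one, while $\Phi_P$ contributes $W\leq C d\log(d\delta^{-1})$, $L\leq C\log d\log(d\delta^{-1})$, and $N_w\leq 12d$. The main technical subtlety I expect is the careful (weak) chain-rule bookkeeping for a ReLU composition, since the intermediate values live in $\IId$ only because each $M_2\in[0,1]$. A minor issue is aligning depths so that all $d$ univariate hat values reach the input layer of $\Phi_P$ simultaneously; this is trivial here because the preprocessing subnetworks all have depth one, but in any case the source-channel and parallelization constructions of Lemma \ref{lem:paralell} and Lemma \ref{lem:special} handle such alignment without inflating the stated bounds.
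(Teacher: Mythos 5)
Your proposal is correct and follows essentially the same route as the paper: realize each univariate factor $\varphi_{k_i,s_i}$ exactly by a constant-depth ReLU subnetwork (the paper uses the nested form $\sigma\big(1-\sigma(2^{k_i+1}x_i-2s_i-1)-\sigma(2s_i+1-2^{k_i+1}x_i)\big)$, you use the equivalent expansion $M_2(t)=\sigma(t)-2\sigma(t-1)+\sigma(t-2)$), feed the resulting vector in $\IId$ into the product network $\Phi_P$ of Lemma~\ref{prop:multi}, and deduce the accuracy, support, derivative (via the a.e.\ chain rule with $|\varphi'_{k_j,s_j}|\le 2^{k_j+1}$), and complexity bounds exactly as the paper does. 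No gaps.
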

\begin{proof}
	Indeed, we write
	$$
	y_i:=	\varphi_{k_i,s_i}(x_i) =\sigma\big(1-\sigma\big(2^{k_i+1}x_i - 2s_i-1\big) - \sigma\big(2s_i+1-2^{k_i+1}x_i\big)\big).
	$$
	Let $\Phi_P$ be the deep  ReLU neural network 	in Lemma \ref{prop:multi} and $\by=(y_1,\ldots,y_d)$ be the inputs of $\Phi_P$. Then we obtain a deep  ReLU neural network  denoted by $\Phi_{\bk,\bs}$.
	We have 
	$$\sup_{ \bx \in \IId} \bigg|\prod_{i=1}^d \varphi_{k_i,s_i}(x_i) - \mathcal{N}(\Phi_{\bk,\bs},\bx) \bigg|= \sup_{ \by \in \IId} \bigg|\prod_{i=1}^d y_i - \mathcal{N}(\Phi_P,\by) \bigg| \leq \delta  $$
	and
	$$	\esssup_{\bx \in \IId}\bigg|\frac{\partial}{\partial x_j}\varphi_{\bk,\bs} (\bx ) - \frac{\partial}{\partial x_j}\mathcal{N}(\Phi_{\bk,\bs},\bx) \bigg| = \esssup_{\by \in \IId}\Bigg|\Bigg(\frac{\partial}{\partial y_j}\prod_{i=1}^d y_i - \frac{\partial}{\partial y_j}\mathcal{N}(\Phi_P,\by)\Bigg) \frac{\dd y_j}{\dd x_j} \Bigg| \leq  2^{k_j+1}\delta \,.
	$$
	Moreover, we have 
	$$L(\Phi_{\bk,\bs})=L(\Phi_P)+2\quad \text{and}\quad W(\Phi_{\bk,\bs}) \leq W(\Phi_P)+7d.$$
	From Lemma \ref{prop:multi} we obtain the desired result.
	\hfill
\end{proof}

We are now ready to formulate and prove the main result.

\begin{theorem}\label{thm:dnn-upper} Let  $d \ge 2$, $1 < \alpha \le 2$, $\beta \geq \alpha$ and $1 \le p \le \infty$. Let 
	$$
	\varepsilon_0=\min \Bigg\{ 1,\frac{d}{2^{\alpha d}(1-2^{1-\alpha})},\frac{K_1{ d^{2}}} {(p+1)^{d/p}2^{(\alpha+1) d}\big(1-2^{-\frac{\beta-\alpha}{\beta-1}}\big)^d } \Bigg\},
	$$	
	where  $K_1$ is  the constant given in Theorem \ref{thm:approx}.  
	
	Then for every $\varepsilon\in (0,\varepsilon_0)$ we can explicitly construct a deep neural network architecture ${\mathbb A}_\varepsilon$ with the following property.
	For every $f \in \Uas$,  we can explicitly construct a deep   ReLU neural network  $\Phi_f$ having the architecture ${\mathbb A}_\varepsilon$ such that
	\begin{equation} \label{<epsilon}
	\|f- \mathcal{N}(\Phi_f,\cdot) \|_{{\mathring{W}_p^1}} \leq \varepsilon,
	\end{equation}
	and there hold the estimates
	\[
	L({\mathbb A}_\varepsilon)  \leq K_2  \log d\log(\varepsilon^{-1})\,
	\quad\text{and}\quad
	W({\mathbb A}_\varepsilon) \leq K_3 B^{-d}(\varepsilon^{-1})^{\frac{1}{\alpha-1}}\log(\varepsilon^{-1}),
	\]
		where $K_2=K_2(\alpha)$ and $K_3=K_3(\alpha,\beta,p)$ are positive constants, and
	\begin{equation} \label{eq:B2}
B=B(d,\alpha,\beta,p):= \big(1-2^{-\frac{1}{\beta-1}}\big)\Bigg(\frac { (p+1)^{\frac{1}{p}}2^{(\alpha+1) }\big(1-2^{-\frac{\beta-\alpha}{\beta-1}}\big)}{{ d^{\frac{2\alpha }{d}}}}\Bigg)^{\frac{1}{\alpha-1}}.
	\end{equation}

	Moreover, if  $\alpha$ and $p$ satisfy
	\begin{equation}\label{eq:condi-alpha-p}
	2^{\frac{1}{\alpha}}-(p+1)^{-\frac{1}{p\alpha}}/2>1
	\end{equation}  and
	\begin{equation}\label{eq:cond-beta}
	\frac{ \alpha+\log\big(1-(p+1)^{-\frac{1}{p\alpha}}2^{-1-\frac{1}{\alpha}}\big) }{1+\log\big(1-(p+1)^{-\frac{1}{p\alpha}}2^{-1-\frac{1}{\alpha}}\big)} <\beta<  1 -\frac{1}{\log\big(1-(p+1)^{-\frac{1}{p\alpha}}2^{-1-\frac{1}{\alpha}}\big)}
	\end{equation}
	then there exist constants $d_0(\alpha,\beta,p)\in \N$  and $B_0(\alpha,\beta,p) > 1$ such that $B \ge B_0(\alpha,\beta,p) > 1$ for all $d\geq d_0(\alpha,\beta,p)$.
\end{theorem}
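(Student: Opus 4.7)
The strategy is to realize the truncated Faber series $R_\beta(m,f)$ from Theorem \ref{thm:approx} as the output of a deep ReLU network by replacing each tensor-product hat function $\varphi_{\bk,\bs}$ with the approximate product network $\Phi_{\bk,\bs}$ of Lemma \ref{lem:product} and assembling them via the parallelization of Lemma \ref{lem:paralell}. First I would fix $m=m(\varepsilon,d,\alpha,\beta,p)$ as the smallest integer for which the right-hand side of \eqref{f-R} is at most $\varepsilon/2$; a direct calculation yields a bound of the form $m\le (\alpha-1)^{-1}\log_2(\varepsilon^{-1})+(\alpha-1)^{-1}(C_0+d\log_2 B^{-1})$ with $B$ as in \eqref{eq:B2}. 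This dimension-dependent offset in $m$ is what eventually produces the factor $B^{-d}$ in the size bound. The proposed approximant
\[
\mathcal{N}(\Phi_f,\bx) \;:=\; \sum_{(\bk,\bs)\in D^d_\beta(m)} \lambda_{\bk,\bs}(f)\,\mathcal{N}(\Phi_{\bk,\bs},\bx)
\]
is obtained by parallelization with accuracy-parameter $\delta$ for each $\Phi_{\bk,\bs}$; since the sparsity pattern is determined by $D^d_\beta(m)$ alone while the coefficients $\lambda_{\bk,\bs}(f)$ appear only as edge weights in the last layer, a single architecture $\mathbb{A}_\varepsilon$ serves all $f\in\Uas$.

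The error is split by the triangle inequality. The first piece is controlled by Theorem \ref{thm:approx} and the choice of $m$. For the second piece, the derivative bound $|\partial_j(\varphi_{\bk,\bs}-\mathcal{N}(\Phi_{\bk,\bs},\cdot))|\le 2^{k_j+1}\delta$ from Lemma \ref{lem:product}, the coefficient estimate \eqref{eq:estimate}, and the fact that for each fixed $\bk$ the supports $\{\supp\varphi_{\bk,\bs}\}_{\bs\in Z^d(\bk)}$ tile $\IId$ (so at any $\bx$ only one summand in the $\bs$-sum is nonzero) reduce the pointwise partial-derivative error to $2\delta\cdot 2^{-(\alpha+1)d}\sum_{\bk\in\Delta^d_\beta(m)} 2^{k_j-\alpha|\bk|_1}$. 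Because $k_j\le|\bk|_\infty\le|\bk|_1\le m$ on $\Delta^d_\beta(m)$, this sum is absorbed after taking $L_p$-norm and summing over $j=1,\dots,d$ by choosing $\delta$ polynomially small in $\varepsilon$ (schematically $\delta\sim\varepsilon\cdot 2^{-C(m+d)}$), which still keeps $\log(d/\delta)=O(\log(\varepsilon^{-1})+\log d)$.

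The counting step then gives $W(\mathbb{A}_\varepsilon)\lesssim |D^d_\beta(m)|\cdot d\log(d/\delta)$ and $L(\mathbb{A}_\varepsilon)\lesssim \log d\cdot \log(d/\delta)$ by Lemma \ref{lem:product} and the depth formula for parallelization in Lemma \ref{lem:paralell}. A combinatorial count of the ``notched'' Smolyak index set $\Delta^d_\beta(m)$ together with the choice of $m$ above then produces the advertised $B^{-d}(\varepsilon^{-1})^{1/(\alpha-1)}\log(\varepsilon^{-1})$ size bound and the $\log d\,\log(\varepsilon^{-1})$ depth bound. The main obstacle is the final assertion that $B>1$ eventually in $d$. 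By \eqref{eq:B2}, since $d^{2\alpha/d}\to 1$ as $d\to\infty$, one has
\[
B \;\longrightarrow\; \bigl(1-2^{-\frac{1}{\beta-1}}\bigr)\Bigl((p+1)^{\frac{1}{p}}2^{\alpha+1}\bigl(1-2^{-\frac{\beta-\alpha}{\beta-1}}\bigr)\Bigr)^{\frac{1}{\alpha-1}},
\]
and a careful two-sided calibration of $\beta$ shows that this limit strictly exceeds $1$ precisely when $\alpha$ and $p$ satisfy \eqref{eq:condi-alpha-p} and $\beta$ is chosen inside the sandwich \eqref{eq:cond-beta}; the delicate analytic check that the logarithmic inequality defining \eqref{eq:cond-beta} is nonempty under \eqref{eq:condi-alpha-p}, and that any admissible $\beta$ makes the above limit $>1$, is the subtle part on which the whole benefit of the construction in high dimensions rests.
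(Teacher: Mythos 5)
Your plan coincides with the paper's proof in every essential respect: the same truncation $R_\beta(m,f)$ with $m$ fixed so that the bound \eqref{f-R} of Theorem \ref{thm:approx} is at most $\varepsilon/2$, the same replacement of each $\varphi_{\bk,\bs}$ by the network of Lemma \ref{lem:product} assembled via the parallelization of Lemma \ref{lem:paralell} (with the coefficients $\lambda_{\bk,\bs}(f)$ entering only as output-layer weights, so one architecture serves all $f\in\Uas$), the same triangle-inequality splitting using \eqref{eq:estimate} and the support inclusion $\supp(\mathcal{N}(\Phi_{\bk,\bs},\cdot))\subset\supp(\varphi_{\bk,\bs})$, the same count via the cardinality bound for $D^d_\beta(m)$ in Lemma \ref{card}, and the same limiting argument $d^{2\alpha/d}\to 1$ combined with the calibration of $\beta$ under \eqref{eq:condi-alpha-p}--\eqref{eq:cond-beta} to get $B>1$. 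One small correction: the error analysis permits (and the stated depth bound effectively requires) the comparatively \emph{large} tolerance $\delta \asymp \varepsilon\, 2^{\alpha d}/d$ used in the paper rather than your schematic $\delta\sim\varepsilon\,2^{-C(m+d)}$, since with the latter $\log(d\delta^{-1})$ acquires an additive $O(d)$ term that is not in general dominated by $\log(\varepsilon^{-1})$, so your claim $\log(d/\delta)=O(\log(\varepsilon^{-1})+\log d)$ only becomes valid after the tolerance is chosen as the error analysis actually dictates.
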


\begin{proof}  We prove the theorem for the case $1\leq p < \infty$. The case $p = \infty$ can be carried out similarly with a slight modification. Let $f \in \Uas$. 
	For $\varepsilon\in (0,\varepsilon_0)$ we take 
	\begin{equation} \label{mm}
	m=\Bigg\lceil \frac{1}{\alpha -1} \log\Bigg(\frac{2K_1{ d^{2}}\varepsilon^{-1}} { (p+1)^{d/p}2^{(\alpha+1) d}\big(1-2^{-\frac{\beta-\alpha}{\beta-1}}\big)^d }\Bigg)\Bigg\rceil.
	\end{equation}
	Let $(\bk,\bs)\in D^d_\beta(m)$ and $\Phi_{\bk,\bs}$ be the  deep ReLU neural network  obtained in Lemma \ref{lem:product}. {Its output} $\mathcal{N}(\Phi_{\bk,\bs},\cdot)$ approximates $\varphi_{\bk,\bs}$ with accuracy $\delta=\delta(\varepsilon)\in (0,1)$ which is chosen later.
	Let $\Phi_f$ be the  deep ReLU neural network  obtained by parallelization as in Lemma \ref{lem:paralell} with the output
	$$\mathcal{N}(\Phi_f,\bx) = \sum_{\bk \in \Delta^d_\beta(m)}   \sum_{\bs\in Z^d(\bk)}  \lambda_{\bk,\bs}(f) \mathcal{N}(\Phi_{\bk,\bs},\bx),\quad \bx\in \IId.$$
	Then we can write
	\begin{equation}  \label{triangle-ineq}
	\|f-\mathcal{N}(\Phi_f,\cdot) \|_{{ \mathring{W}_p^1}} \leq \| f- R_{\beta}(m,f)\|_{{\mathring{W}_p^1}} + \|R_{\beta}(m,f)-\mathcal{N}(\Phi_f,\cdot)\|_{{\mathring{W}_p^1}},
	\end{equation}	
	where $R_{\beta}(m,f)$ is the operator given in \eqref{Rbm}.
	With the choice of $m$  {as} in \eqref{mm},	we get from Theorem~\ref{thm:approx}
	\begin{equation}  \label{|f-R|<}
	\begin{aligned} 
	\|f -R_{\beta}(m,f) \|_{{ \mathring{W}_p^1}}
	&\le K_1\frac{{ d^{2}}2^{-m(\alpha-1)}} { (p+1)^{\frac{d}{p}}2^{(\alpha + 1)d}\big(1-  2^{-\frac{\beta-\alpha}{\beta-1}}\big)^{d}} \le \varepsilon/2.
	\end{aligned}
	\end{equation}
	Let us estimate the norm $\|R_{\beta}(m,f) -\mathcal{N}(\Phi_f,\cdot)\|_{{ \mathring{W}_p^1}}$. Since $\supp\big(\mathcal{N}(\Phi_{\bk,\bs},\cdot)\big)\subset \supp(\varphi_{\bk,\bs})$  (see Lemma \ref{lem:product}), we have
	\begin{align*}
	&	\|R_{\beta}(m,f) -\mathcal{N}(\Phi_f,\cdot)\|_{{\mathring{W}_p^1}} 
	\leq 
	\sum_{\bk \in \Delta^d_\beta(m)} \Bigg\|\sum_{\bs\in Z^d(\bk)} \lambda_{\bk,\bs}(f) \big( \varphi_{\bk,\bs}- \mathcal{N}(\Phi_{\bk,\bs},\cdot)\big)\Bigg\|_{{\mathring{W}_p^1}}
	\\
	& =
	\sum_{\bk \in \Delta^d_\beta(m)}  \Bigg(\sum_{j=1}^d \int_{\IId}\Bigg|\sum_{\bs\in Z^d(\bk)} \lambda_{\bk,\bs}(f)\bigg(\frac{\partial}{\partial x_j}\varphi_{\bk,\bs} (\bx) - \frac{\partial}{\partial x_j}\mathcal{N}(\Phi_{\bk,\bs},\bx) \bigg)\Bigg|^p\dd \bx\Bigg)^{1/p}
	\\
	& =
	\sum_{\bk \in \Delta^d_\beta(m)}  \Bigg(\sum_{j=1}^d\sum_{\bs\in Z^d(\bk)} |\lambda_{\bk,\bs}(f)|^p\int_{\IId}\bigg|\frac{\partial}{\partial x_j}\varphi_{\bk,\bs} (\bx) - \frac{\partial}{\partial x_j}\mathcal{N}(\Phi_{\bk,\bs},\bx)\bigg|^p\dd \bx\Bigg)^{1/p}.
	\end{align*}
	Using Lemma \ref{lem:product} and estimate \eqref{eq:estimate} we get
	\beqq
	\begin{split}
		\|R_{\beta}(m,f) -\mathcal{N}(\Phi_f,\cdot)\|_{{\mathring{W}_p^1}}
		& \leq
		\sum_{\bk \in \Delta^d_\beta(m)} \sup_{\bs\in Z^d(\bk)}|\lambda_{\bk,\bs}(f)| \Bigg(\sum_{j=1}^d\sum_{\bs\in Z^d(\bk)} 2^{-|\bk|_1}2^{p(k_j+1)}\Bigg)^{1/p}  \delta
		\\
		&
		\leq 2^{-(\alpha+1)d}\sum_{\bk \in  \Delta^d_\beta(m) }2^{-|\bk|_1\alpha}\Bigg(\sum_{j=1}^d2^{p(k_j+1)}\Bigg)^{1/p}\delta
		\\
		& \leq 	2^{-(\alpha+1)d}	   \sum_{\ell=0}^\infty2^{-\ell \alpha} \sum_{|\bk|_1=\ell}\Bigg(\sum_{j=1}^d2^{p(k_j+1)}\Bigg)^{1/p} \delta.
	\end{split}
	\eeqq
	Now Lemma \ref{lem:p-induction} leads to
	\beqq
	\begin{split}
		\|R_{\beta}(m,f) -\mathcal{N}(\Phi_f,\cdot)\|_{{\mathring{W}_p^1}}
		& \leq d 2^d 2^{-(\alpha+1)d}  \delta \sum_{\ell=0}^\infty2^{-\ell (\alpha-1)}
		\leq 
		\frac{d2^{-\alpha d} }{1-2^{1-\alpha}} \delta
		.
	\end{split}
	\eeqq
	Define
	$
	\delta = \delta (\varepsilon): =
	\frac{1-2^{1-\alpha}}{d2^{-\alpha d}} \frac{\varepsilon}{2}.
	$
	Since $\varepsilon < \frac{2d}{2^{\alpha d}(1-2^{1-\alpha})}$ we get $\delta<1$. This choice of $\delta$ gives
	\begin{equation*}
	\begin{split}
	\|R_{\beta}(m,f) -\mathcal{N}(\Phi_f,\cdot)\|_{{\mathring{W}_p^1}}
	\le \varepsilon/2
	\end{split}
	\end{equation*}
	which together with \eqref{triangle-ineq} and 	\eqref{|f-R|<} proves \eqref{<epsilon}.
	
	We now prove the bounds for the  depth and the  size of $\Phi_f$. From  Lemmata \ref{lem:paralell} and \ref{lem:product} we have
	\begin{equation} \label{eq:L(phif)}
	\begin{split}
	L(\Phi_f)& =  \max_{(\bk,\bs)\in D_\beta^d(m)} L(\Phi_{\bk,\bs})
	\leq C\log d\log(d\delta^{-1})
	\\
	&\leq C \log d\log\bigg(\frac{2d^2 \varepsilon^{-1}}{2^{\alpha d}(1-2^{1-\alpha})}\bigg) \leq K_2\log d \log(\varepsilon^{-1})
	\end{split}
	\end{equation}
	for some positive constant $K_2=K_2(\alpha)$,
	and
	\begin{equation}\label{eq:W(phif)}
	\begin{split}
	W(\Phi_f) & \leq 
	\sum_{(\bk,\bs)\in D_\beta^d(m)} W(\Phi_{\bk,\bs})
	+ 
	\sum_{(\bk,\bs): L(\Phi_{\bk,\bs}) <L(\Phi_f)} \big(L(\Phi_f)-L(\Phi_{\bk,\bs})+2\big)
	\\
	&
	\leq 
	\sum_{(\bk,\bs)\in D_\beta^d(m)} W(\Phi_{\bk,\bs})
	+ 
	\sum_{(\bk,\bs)\in D_\beta^d(m)} \bigg( \max_{(\bk,\bs)\in D_\beta^d(m)} W(\Phi_{\bk,\bs})+2\bigg)
	\\
	&
	\leq  2|D_\beta^d(m)| \max_{(\bk,\bs)\in D_\beta^d(m)}\big( W(\Phi_{\bk,\bs} ) +  1\big)
	\leq \frac{2\beta}{\beta-1}\frac{d2^m}{\big(1-2^{-\frac{1}{\beta-1}}\big)^{d}} \big( Cd\log(d\delta^{-1})+1 \big),
	\end{split}
	\end{equation}
where in the last estimate we have used Lemma \ref{card}. From the choice of $m$ we derive 
	\begin{equation*} 
	\begin{split}
	d^22^m\big(1-2^{-\frac{1}{\beta-1}}\big)^{-d} 
	& \leq  2 d^2  \big(1-2^{-\frac{1}{\beta-1}}\big)^{-d} \Bigg(\frac{2K_1{ d^{2}}\varepsilon^{-1}} { (p+1)^{\frac{d}{p}}2^{(\alpha+1) d}\big(1-2^{-\frac{\beta-\alpha}{\beta-1}}\big)^d  }\Bigg)^{\frac{1}{\alpha-1}}
	\\
	& \leq 
	2 (2K_1)^{\frac{1}{\alpha-1}}\big(1-2^{-\frac{1}{\beta-1}}\big)^{-d} \Bigg(\frac{{ d^{\frac{2\alpha}{d}}}} { (p+1)^{\frac{1}{p}}2^{(\alpha+1) }\big(1-2^{-\frac{\beta-\alpha}{\beta-1}}\big)   }\Bigg)^{\frac{d}{\alpha-1}}(\varepsilon^{-1})^{\frac{1}{\alpha-1}}.
	\end{split}
	\end{equation*}
	Inserting this into \eqref{eq:W(phif)} we find
	\begin{equation*}
	W(\Phi_f) \leq K_3  B^{-d}(\varepsilon^{-1})^{\frac{1}{\alpha-1}}(\log\varepsilon^{-1})
	\end{equation*}
	with $B$ given in \eqref{eq:B2} and some positive constant $K_3$ depending on $\alpha$, $\beta$, and $p$. 
	
	To complete the proof of the first statement of the theorem it is sufficient to notice that $\Phi_f$ has the architecture ${\mathbb A}_\varepsilon$ (independent of $f$) which is defined as the minimal architecture of the deep ReLU neural network $\Phi$ obtained by  parallelization as in Lemma \ref{lem:paralell} with the output
	$$\mathcal{N}(\Phi,\bx) = \sum_{\bk \in \Delta^d_\beta(m)}   \sum_{\bs\in Z^d(\bk)}  \mathcal{N}(\Phi_{\bk,\bs},\bx),\quad \bx\in \IId.$$
	
To prove the second one, we show that under the conditions \eqref{eq:condi-alpha-p} and \eqref{eq:cond-beta} it holds
	\[
	\big(1-2^{-\frac{1}{\beta-1}}\big)^{(\alpha-1)}(p+1)^{\frac{1}{p}}2^{\alpha+1}\big(1-2^{-\frac{\beta-\alpha}{\beta-1}}\big) > 1.
	\]
	Indeed, the last inequality is equivalent to
	\begin{equation}\label{eq:beta}
	\frac{1}{(p+1)^{\frac{1}{p}}2^{\alpha+1}}<\big(1-2^{-\frac{1}{\beta-1}}\big)^{(\alpha-1)}\big(1-2^{-\frac{\beta-\alpha}{\beta-1}}\big).
	\end{equation}
	If $\beta-\alpha \geq 1$ then $\big(1-2^{-\frac{1}{\beta-1}}\big)\leq \big(1-2^{-\frac{\beta-\alpha}{\beta-1}}\big)$. Hence, the last inequality is fulfilled if 
	\[
	\frac{1}{(p+1)^{\frac{1}{p}}2^{\alpha+1}}<\big(1-2^{-\frac{1}{\beta
			-1}}\big)^\alpha \qquad \Longleftrightarrow \qquad \beta <1 -\frac{1}{\log\big(1-(p+1)^{-\frac{1}{p\alpha}}2^{-1-\frac{1}{\alpha}}\big)}.
	\]
	If $\beta-\alpha<1$ then $\big(1-2^{-\frac{1}{\beta-1}}\big)> \big(1-2^{-\frac{\beta-\alpha}{\beta-1}}\big)$. 
	Hence the inequality \eqref{eq:beta} is fulfilled if
	\[
	\frac{1}{(p+1)^{\frac{1}{p}}2^{\alpha+1}} <\big(1-2^{-\frac{\beta-\alpha}{\beta
			-1}}\big)^\alpha \qquad \Longleftrightarrow \qquad \beta > \frac{ \alpha+\log\big(1-(p+1)^{-\frac{1}{p\alpha}}2^{-1-\frac{1}{\alpha}}\big) }{1+\log\big(1-(p+1)^{-\frac{1}{p\alpha}}2^{-1-\frac{1}{\alpha}}\big)}.
	\]
The equivalence is due to $1-(p+1)^{-\frac{1}{p\alpha}}2^{-1-\frac{1}{\alpha}}>2^{-1}$. 	Assigning 
	$$ 1 -\frac{1}{\log\big(1-(p+1)^{-\frac{1}{p\alpha}}2^{-1-\frac{1}{\alpha}}\big)} > \frac{ \alpha+\log\big(1-(p+1)^{-\frac{1}{p\alpha}}2^{-1-\frac{1}{\alpha}}\big) }{1+\log\big(1-(p+1)^{-\frac{1}{p\alpha}}2^{-1-\frac{1}{\alpha}}\big)} $$
	we find $ 2^{\frac{1}{\alpha}}-(p+1)^{-\frac{1}{p\alpha}}/2>1$. 
	Since ${ d^{\frac{2\alpha }{d}}}$ tends to 1 when $d\to \infty$, there are $d_0(\alpha,\beta,p)\in \N$ and $B_0(\alpha,\beta,p) > 1$ such that $B \ge B_0(\alpha,\beta,p) > 1$ for all $d\geq d_0(\alpha,\beta,p)$. 
	\hfill
\end{proof}

For $f\in \Uas$ and $\varepsilon\in (0,\varepsilon_0)$, we observe that the width of the deep ReLU neural network  $\Phi_f$ constructed in Theorem \ref{thm:dnn-upper} may depend on $\varepsilon$. {To construct a deep neural network} with the same output that has a width independent of $\varepsilon$  we can concatenate the deep ReLU networks $\Phi_{\bk,\bs}$, $(\bk,\bs)\in D^d_\beta(m)$, with the help of special deep ReLU networks.

\begin{corollary}\label{cor:dnn-upper} Under the assumptions and notations of Theorem \ref{thm:dnn-upper},  for every $\varepsilon\in (0,\varepsilon_0)$ there exists a deep neural network architecture ${\mathbb A}_\varepsilon^*$ with the following property.
	For every $f \in \Uas$,  we can explicitly construct a deep   ReLU neural network  $\Phi_f^*$ having the architecture ${\mathbb A}_\varepsilon^*$, and positive constants $K_4$ and $K_5=K_5(\alpha,\beta,p)$ such that
	\begin{equation} \label{<epsilon1}
	\|f- \mathcal{N}(\Phi_f^*,\cdot) \|_{{ \mathring{W}_p^1}} \leq \varepsilon, 
	\end{equation}
	and there hold the estimates
	\begin{equation} \label{N_w&L}
	N_w({\mathbb A}_\varepsilon^*)  \leq K_4  d\,
	\quad\text{and}\quad
	L({\mathbb A}_\varepsilon^*) \leq 
	K_5 B^{-d}(\varepsilon^{-1})^{\frac{1}{\alpha-1}}\log(\varepsilon^{-1}).
	\end{equation}
\end{corollary}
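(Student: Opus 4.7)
The approximation bound \eqref{<epsilon1} will follow for free: with $m$ and $\delta$ chosen exactly as in the proof of Theorem \ref{thm:dnn-upper}, I will arrange $\Phi_f^*$ to have the same output function as the network $\Phi_f$ produced there, namely
\[
\mathcal{N}(\Phi_f^*,\bx) \ = \ \sum_{\bk \in \Delta^d_\beta(m)} \sum_{\bs\in Z^d(\bk)} \lambda_{\bk,\bs}(f)\,\mathcal{N}(\Phi_{\bk,\bs},\bx).
\]
The only change is how the atomic networks $\Phi_{\bk,\bs}$ from Lemma \ref{lem:product} are assembled. In Theorem \ref{thm:dnn-upper} they were placed in parallel via Lemma \ref{lem:paralell}, which causes the width to grow with the number of active neurons. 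Here I will instead assemble them sequentially inside a special deep ReLU network of the type introduced in Section \ref{ReLU}, and then invoke Lemma \ref{lem:special} to convert this into a standard deep ReLU network $\Phi_f^*$ of the same width and depth.

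Concretely, enumerate $D^d_\beta(m)$ as $(\bk_1,\bs_1),\ldots,(\bk_N,\bs_N)$ with $N = |D^d_\beta(m)|$ and reserve $d$ top source channels for $\bx$ together with one bottom collation channel for the running sum. The blocks $\Phi_{\bk_1,\bs_1},\ldots,\Phi_{\bk_N,\bs_N}$ are stacked one after another between the source and collation channels, each block reading its input from the source channels and depositing $\lambda_{\bk_j,\bs_j}(f)\,\mathcal{N}(\Phi_{\bk_j,\bs_j},\bx)$ additively into the collation channel (the scalar $\lambda_{\bk_j,\bs_j}(f)$ is absorbed into the edge weight that feeds the collation). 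Since Lemma \ref{lem:product} gives $N_w(\Phi_{\bk,\bs}) \leq Cd$, the total width is at most $d + Cd + 1 \leq K_4 d$, which is the first bound in \eqref{N_w&L}. The architecture ${\mathbb A}_\varepsilon^*$ is then defined as the minimal architecture of this special network with all edge weights replaced by ones, which depends on $d$ and $\varepsilon$ but not on $f$.

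For the depth, sequential stacking gives $L(\Phi_f^*) \leq \sum_{j=1}^N (L(\Phi_{\bk_j,\bs_j}) + c)$ for a small routing overhead $c$ per block. Each summand is bounded by $K_2 \log d \log(\varepsilon^{-1})$ exactly as in \eqref{eq:L(phif)}, while Lemma \ref{card} combined with the bound on $d^{2}\,2^m(1-2^{-1/(\beta-1)})^{-d}$ already derived inside the proof of Theorem \ref{thm:dnn-upper} yields
\[
N \ = \ |D^d_\beta(m)| \ \lesssim \ \tfrac{1}{d}\,B^{-d}(\varepsilon^{-1})^{\frac{1}{\alpha-1}}.
\]
Multiplying these and absorbing the factor $\log d / d$, bounded uniformly for $d\geq 2$, into the constant $K_5$ produces the second bound in \eqref{N_w&L}. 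The main delicate point will be verifying that the source and collation channels behave correctly across the junctions between consecutive blocks, i.e.\ that the source channels propagate $\bx$ unchanged and that the additive deposits into the collation channel are not clipped by the ReLU; this is exactly the content of the special-network formalism and the shift-and-rectify step in the proof of Lemma \ref{lem:special}, so no new analytic ingredient is needed beyond careful bookkeeping.
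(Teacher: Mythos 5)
Your proposal is correct and follows essentially the same route as the paper: the paper also replaces the parallelization of Theorem \ref{thm:dnn-upper} by a sequential concatenation of the blocks $\Phi_{\bk,\bs}$ inside a special network with $d$ source channels and one collation channel, invokes Lemma \ref{lem:special} to obtain a standard ReLU network of unchanged width and depth, bounds the width by $K_4 d$ via Lemma \ref{lem:product}, and bounds the depth by $\sum_{j} L(\Phi_j) \le C\,|D^d_\beta(m)|\log d\,\log(d\delta^{-1})$ combined with the estimates \eqref{eq:L(phif)} and \eqref{eq:W(phif)}. Your accounting of $|D^d_\beta(m)|$ and the absorption of the factor $\log d/d$ into $K_5$ matches the paper's computation.
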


\begin{proof}
	Consider the special deep ReLU neural network $\Phi_f'$ as in Figure \ref{fig:concatenation} ($d=2$). We number the set   $\{(\bk ,\bs )\in D_\beta^d(m)\}$ from $1$ to $J$, where $J=|D_\beta^d(m)|$. 	
	The $d$ source channels carry $\bx \in \IId$ forward so that it  is the input of all networks $\Phi_{\ell}$,  $\ell = 1,\ldots,J$. The $\big(\sum_{j=1}^\ell L(\Phi_j)\big)$th node in the collation channel stores the partial sum  
	$
	\sum_{j=1}^\ell \lambda_{j}(f) \mathcal{N}(\Phi_{j},\bx)
	$
	of the outputs of $\Phi_{\ell}$, $\ell=1,\ldots,J$. Hence,
	\begin{equation} \label{R_Phi=R_Phi'}
	\mathcal{N}(\Phi_f',\bx) = \mathcal{N}(\Phi_f,\bx)
	= \sum_{j=1}^J \lambda_{j}(f) \mathcal{N}(\Phi_{j},\bx),
	\end{equation}
	where $\Phi_f$ is the deep ReLU neural network in Theorem \ref{thm:dnn-upper}. From Lemma \ref{lem:product} we can find  an absolute positive constant $K_4$ so that
	$
	N_w(\Phi_f')\leq K_4d
	$
	and a positive constant $K_5=K_5(\alpha,\beta,p)$ such that
	$$
	L(\Phi'_f)\leq \sum_{j=1}^J L(\Phi_{j}) \leq  C |D^d_\beta(m)| \log d \log(d\delta^{-1}) 
	\leq 
	K_5 B^{-d}(\varepsilon^{-1})^{\frac{1}{\alpha-1}}\log(\varepsilon^{-1}),
	$$
	where the last inequality follows from \eqref{eq:L(phif)} and \eqref{eq:W(phif)}.  Hence, by Lemma \ref{lem:special} and \eqref{R_Phi=R_Phi'}, $\Phi_f'$  generates a deep ReLU neural network $\Phi_f^*$ such that $\mathcal{N}(\Phi_f^*,\bx) = \mathcal{N}(\Phi_f,\bx)$ and, consequently,  there hold \eqref{<epsilon1} and
	\[
	N_w(\Phi_f^*)  \leq K_4  d\,
	\quad\text{and}\quad
	L(\Phi_f^*) \leq 
	K_5  B^{-d}(\varepsilon^{-1})^{\frac{1}{\alpha-1}}\log(\varepsilon^{-1}).
	\] 
	The proof of existence of an architecture ${\mathbb A}_\varepsilon^*$ of $\Phi_f^*$ satisfying \eqref{N_w&L} is similar to the proof of existence of 
	${\mathbb A}_\varepsilon$ at the end of the proof of Theorem \ref{thm:dnn-upper}.
	\hfill
\end{proof}

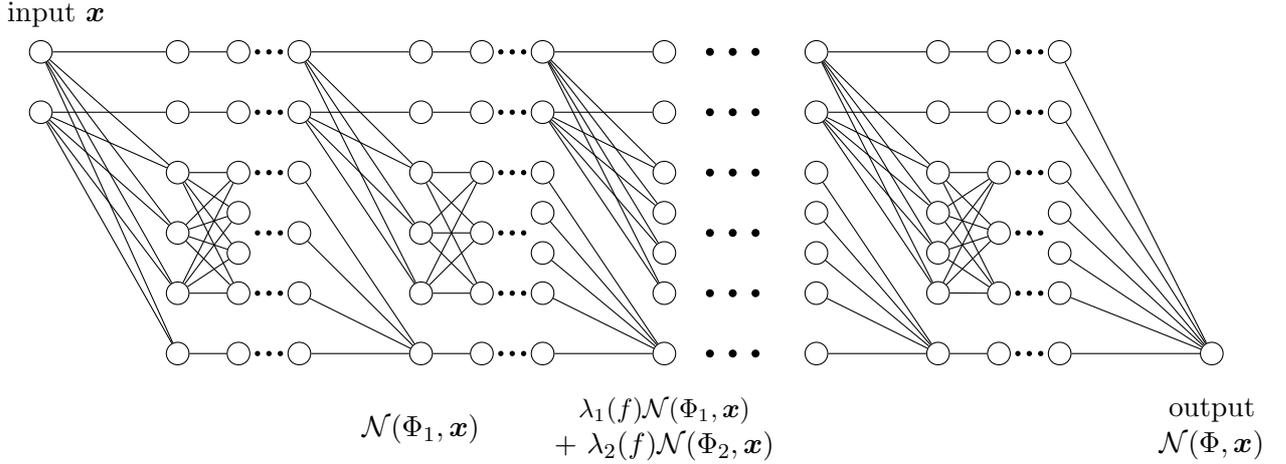
\begin{figure}
\begin{center}
\begin{tikzpicture}
\tikzstyle{place}=[circle, draw=black,scale=1, inner sep=3pt,minimum size=1pt, align=center]

\foreach \x in {1,...,2}
\draw node at (-0.2, -\x*0.8 +0.8) [place] (input_\x) { $$};
\node at (0, 0.5) {input $\bx$};

\foreach \x in {1,...,2}
\node at (1.6, -\x*0.8 +0.8) [place] (hiddenx1_\x){$$};

\foreach \x in {1,...,3}
\node at (1.6, -1.6 -\x*0.8+0.8) [place] (hiddenks1_\x){$$};

\node at (1.6, -4.0) [place] (hiddencol1){$$};

\foreach \x in {1,...,2}
\node at (2.4, -\x*0.8 +0.8) [place] (hiddenx15_\x){$$};

\foreach \x in {1,...,4}
\node at (2.4, -1.6 -\x*0.533+0.533) [place] (hiddenks15_\x){$$};

\node at (2.4, -4.0) [place] (hiddencol15){$$};

\foreach \x in {1,...,3}
\fill (2.5+\x*0.15, -0.0) circle (1pt);
\foreach \x in {1,...,3}
\fill (2.5+\x*0.15, -0.8) circle (1pt);
\foreach \x in {1,...,3}
\fill (2.5+\x*0.15, -1.6) circle (1pt);
\foreach \x in {1,...,3}
\fill (2.5+\x*0.15, -2.4) circle (1pt);
\foreach \x in {1,...,3}
\fill (2.5+\x*0.15, -3.2) circle (1pt);
\foreach \x in {1,...,3}
\fill (2.5+\x*0.15, -4.0) circle (1pt);

\foreach \x in {1,...,2}
\node at (3.2,  -\x*0.8 +0.8) [place] (hiddenx2_\x){$$};

\foreach \x in {1,...,3}
\node at (3.2, -1.6 -\x*0.8+0.8) [place] (hiddenks2_\x){$$};

\node at (3.2, -4.0) [place] (hiddencol2){$$};

\foreach \x in {1,...,2}
\node at (4.8, -\x*0.8 +0.8) [place] (hiddenx3_\x){$$};
\foreach \x in {1,...,3}
\node at (4.8, -1.6 -\x*0.8+0.8) [place] (hiddenks3_\x){$$};
\node at (4.8, -4.0) [place] (hiddencol3){$$};
\node at (4.8, -5.0)  {$\mathcal{N}(\Phi_{1},\bx)$};

\foreach \x in {1,...,2}
\node at (5.6,  -\x*0.8 +0.8) [place] (hiddenx35_\x){$$};
\foreach \x in {1,...,3}
\node at (5.6, -1.6 -\x*0.8+0.8) [place] (hiddenks35_\x){$$};
\node at (5.6, -4.0) [place] (hiddencol35){$$};

\foreach \x in {1,...,3}
\fill (5.7+\x*0.15, -0.0) circle (1pt);
\foreach \x in {1,...,3}
\fill (5.7+\x*0.15, -0.8) circle (1pt);
\foreach \x in {1,...,3}
\fill (5.7+\x*0.15, -1.6) circle (1pt);
\foreach \x in {1,...,3}
\fill (5.7+\x*0.15, -2.4) circle (1pt);
\foreach \x in {1,...,3}
\fill (5.7+\x*0.15, -3.2) circle (1pt);
\foreach \x in {1,...,3}
\fill (5.7+\x*0.15, -4.0) circle (1pt);

\foreach \x in {1,...,2}
\node at (6.4,  -\x*0.8 +0.8) [place] (hiddenx4_\x){$$};
\foreach \x in {1,...,4}
\node at (6.4, -1.6 -\x*0.533+0.533) [place] (hiddenks4_\x){$$};
\node at (6.4, -4.0) [place] (hiddencol4){$$};

\foreach \x in {1,...,2}
\node at (8.0,  -\x*0.8 +0.8) [place] (hiddenx5_\x){$$};
\foreach \x in {1,...,4}
\node at (8.0, -1.6 -\x*0.533+0.533) [place] (hiddenks5_\x){$$};
\node at (8.0, -4.0) [place] (hiddencol5){$$};
\node at (8.0, -5.0) [align=center] {$\small \lambda_1(f)\mathcal{N}(\Phi_{1},\bx)$ \\  + $ \lambda_2(f)\mathcal{N}(\Phi_{2},\bx)$};

\foreach \x in {1,...,3}
\fill (8.3+\x*0.3, -0.0) circle (1.5pt);
\foreach \x in {1,...,3}
\fill (8.3+\x*0.3, -0.8) circle (1.5pt);
\foreach \x in {1,...,3}
\fill (8.3+\x*0.3, -1.6) circle (1.5pt);
\foreach \x in {1,...,3}
\fill (8.3+\x*0.3, -2.4) circle (1.5pt);
\foreach \x in {1,...,3}
\fill (8.3+\x*0.3, -3.2) circle (1.5pt);
\foreach \x in {1,...,3}
\fill (8.3+\x*0.3, -4.0) circle (1.5pt);

\foreach \x in {1,...,2}
\node at (10.0, -\x*0.8 +0.8) [place] (hiddenx6_\x){$$};
\foreach \x in {1,...,4}
\node at (10.0, -1.6 -\x*0.533+0.533) [place] (hiddenks6_\x){$$};
\node at (10.0, -4.0) [place] (hiddencol6){$$};

\foreach \x in {1,...,2}
\node at (11.6, -\x*0.8 +0.8) [place] (hiddenx7_\x){$$};
\foreach \x in {1,...,4}
\node at (11.6, -1.6 -\x*0.532+0.532) [place] (hiddenks7_\x){$$};
\node at (11.6, -4.0) [place] (hiddencol7){$$};

\foreach \x in {1,...,2}
\node at (12.4, -\x*0.8 +0.8) [place] (hiddenx75_\x){$$};
\foreach \x in {1,...,3}
\node at (12.4, -1.6 -\x*0.8+0.8) [place] (hiddenks75_\x){$$};
\node at (12.4, -4.0) [place] (hiddencol75){$$};
\foreach \x in {1,...,3}
\fill (12.5+\x*0.15, -0.0) circle (1pt);
\foreach \x in {1,...,3}
\fill (12.5+\x*0.15, -0.8) circle (1pt);
\foreach \x in {1,...,3}
\fill (12.5+\x*0.15, -1.6) circle (1pt);
\foreach \x in {1,...,3}
\fill (12.5+\x*0.15, -2.4) circle (1pt);
\foreach \x in {1,...,3}
\fill (12.5+\x*0.15, -3.2) circle (1pt);
\foreach \x in {1,...,3}
\fill (12.5+\x*0.15, -4.0) circle (1pt);

\foreach \x in {1,...,2}
\node at (13.2, -\x*0.8 +0.8) [place] (hiddenx8_\x){$$};
\foreach \x in {1,...,4}
\node at (13.2, -1.6 -\x*0.533+0.533) [place] (hiddenks8_\x){$$};
\node at (13.2, -4.0) [place] (hiddencol8){$$};

\node at (15.2, -4.0)  [place] (output){$$};
\node at (15.2, -5.0) [align=center] {output
\\
$\mathcal{N}(\Phi,\bx)$
};

\foreach \i in {1,...,2}
\foreach \j in {1,...,3}
\draw [-] (input_\i) to (hiddenks1_\j);
\foreach \i in {1,...,2}
\draw [-] (input_\i) to (hiddencol1);
\foreach \i in {1,...,2}
\draw [-] (input_\i) to (hiddenx1_\i);


\foreach \i in {1,...,2}
\draw [-] (hiddenx1_\i) to (hiddenx15_\i);

\foreach \i in {1,...,3}
\foreach \j in {1,...,4}
\draw [-] (hiddenks1_\i) to (hiddenks15_\j);
\draw [-] (hiddencol1) to (hiddencol15);



\foreach \i in {1,...,2}
\draw [-] (hiddenx2_\i) to (hiddenx3_\i);
\foreach \i in {1,...,2}
\foreach \j in {1,...,3}
\draw [-] (hiddenx2_\i) to (hiddenks3_\j);

\foreach \j in {1,...,3}
\draw [-] (hiddenks2_\j) to (hiddencol3);
\draw [-] (hiddencol2) to (hiddencol3);


\foreach \i in {1,...,2}
\draw [-] (hiddenx3_\i) to (hiddenx35_\i);
\foreach \i in {1,...,3}
\foreach \j in {1,...,3}
\draw [-] (hiddenks3_\i) to (hiddenks35_\j);

\draw [-] (hiddencol3) to (hiddencol35);



\foreach \i in {1,...,2}
\draw [-] (hiddenx4_\i) to (hiddenx5_\i);
\foreach \i in {1,...,2}
\foreach \j in {1,...,4}
\draw [-] (hiddenx4_\i) to (hiddenks5_\j);

\foreach \j in {1,...,4}
\draw [-] (hiddenks4_\j) to (hiddencol5);
\draw [-] (hiddencol4) to (hiddencol5);



\foreach \i in {1,...,2}
\draw [-] (hiddenx6_\i) to (hiddenx7_\i);
\foreach \i in {1,...,2}
\foreach \j in {1,...,4}
\draw [-] (hiddenx6_\i) to (hiddenks7_\j);

\foreach \j in {1,...,4}
\draw [-] (hiddenks6_\j) to (hiddencol7);
\draw [-] (hiddencol6) to (hiddencol7);


\foreach \i in {1,...,2}
\draw [-] (hiddenx7_\i) to (hiddenx75_\i);
\foreach \i in {1,...,4}
\foreach \j in {1,...,3}
\draw [-] (hiddenks7_\i) to (hiddenks75_\j);

\draw [-] (hiddencol7) to (hiddencol75);


\foreach \i in {1,...,2}
\draw [-] (hiddenx8_\i) to (output);
\foreach \j in {1,...,4}
\draw [-] (hiddenks8_\j) to (output);

\draw [-] (hiddencol8) to (output);
\end{tikzpicture}
\caption{The graph  of the deep neural network $\Phi_f'$ as a concatenation of the neural networks $\Phi_{\bk,\bs}$, $(\bk,\bs)\in D_\beta^d(m)$, ($d=2$)}
\label{fig:concatenation}
\end{center}
\end{figure}

\begin{remark}
	
	{\rm
	 When $1 \le p \le 2$, the  condition \eqref{eq:condi-alpha-p} is satisfied for all $\alpha\in (1,2]$, and, therefore, we can always find  $\beta = \beta (\alpha)>\alpha$, $d_0(\alpha)\in \NN$, and $B_0(\alpha) > 1$  such that $B \ge B_0(\alpha) > 1$ for all $d\geq d_0(\alpha)$. 
}

\end{remark}

\begin{remark} \label{rmk4.3}
	{\rm	
 In \cite{Suzu18} the  author studied the approximation by deep ReLU neural networks of functions in the Besov space $B^\alpha_{p,\theta}(\IId)$ of mixed smoothness. The approximation error is measured in the norm of the space $L_\infty(\IId)$. If  $\alpha>\max(0,1/p -1/q)$ and  deep ReLU networks have depth $\mathcal{O}(\log N)$, width  $\mathcal{O}(N)$ and size $\mathcal{O}(N\log N)$, then the approximation error  is estimated by $C(\alpha, p, \theta, q, d)N^{-\alpha}\log^{\alpha(d-1)}N$.  When $\alpha\in (1,2)$,  the space $B^\alpha_{\infty,\infty}(\IId)$ coincides with $H_\infty^\alpha(\IId)$.  The approximation problem considered in the present paper,  is  completely different from that in \cite{Suzu18} since the error of approximation  is measured in the norm of the space $\mathring{W}_p^1$. This leads to  that the term $\log(\varepsilon^{-1})$ in Theorem \ref{thm:dnn-upper} does not depend on dimension $d$, compared to  the term $\log^{\alpha(d-1)}N$ in \cite{Suzu18} which is increasing exponentially  in $d$ when $d$ going to $\infty$. Moreover, { the constant} $C(\alpha, p, \theta, q, d)$ in \cite{Suzu18} is unexplicit in dimension $d$.}

\end{remark}

\begin{remark} \label{rmk4.4}
	{\rm			
			As  commented in Introduction and in Section \ref{sec:sampling}, in \cite{MoDu19} the authors constructed a deep ReLU  neural network for approximation  with accuracy $\varepsilon$ of a function with homogeneous boundary condition  in Sobolev space $ X^{p,2} \subset W^2_p(\IId) $ ($p=2,\infty$) of mixed smoothness $2$  (recall that $X^{\infty,2}=\mathring{H}_\infty^2(\IId)$). The approximation error is measured in the norm of the space $L_\infty(\IId)$. Its depth and size are evaluated as  $C(p,d) (\log(\varepsilon^{-1}) \log d)$ and $C'(p,d)(\varepsilon^{-1/2} \log^{\frac{3}{2}(d-1) + 1}(\varepsilon^{-1})(d - 1))$, respectively.   The term $\log^{\frac{3}{2}(d-1) + 1}(\varepsilon^{-1})$  increases faster than the exponent $B^d$ for any $B> 1$ when  $d$ becoming very large.  The constants $C(p,d)$and $C'(p,d)$ were not specified explicitly in dimension $d$.  
			 {The approximation problem investigated} in the present paper is also different from that in \cite{MoDu19},  since as mentioned above, the error of approximation  is measured in $\mathring{W}_p^1$. 
			Although  the representation by Faber series  was used in  both \cite{MoDu19} and our paper, as mentioned in Remark \ref{sparsity}, methods of sparse-grid approximation and techniques of evaluation of the approximation error are different, because at least the norms measuring the error are different. 		
		}		
\end{remark}

\begin{remark}\label{remark:PDE}
{\rm
  We now give an application of our results in this section to  numerical approximation of  solutions to elliptic PDEs. Consider a modeled diffusion elliptic equation  with homogeneous boundary condition:
\begin{equation} \label{ellip}
- {\rm div} (a(\bx)\nabla u(\bx))
\ = \
f(\bx) \quad \text{in} \quad \IId,
\quad u|_{\partial \IId} \ = \ 0, 
\end{equation}
 where the function $f$ and diffusion coefficient $a$ have sufficient regularity.
 Denote by $V:= \mathring{W}^1_2(\IId)$ the energy space. If $a$ satisfies the ellipticity assumption
\begin{equation} \nonumber
0<a_{\min} \leq a(\bx) \leq a_{\max}<\infty, \ \forall \bx \in \IId,
\end{equation}
by the well-known Lax-Milgram lemma, there exists a unique solution $u \in V$  in weak form which satisfies the variational equation
\begin{equation} \nonumber
\int_{\IId} a(\bx)\nabla u(\bx) \cdot \nabla v(\bx) \, \text{d} \bx
\ = \
\int_{\IId}  f(\bx)  v(\bx) \, \text{d} \bx  \quad \forall v \in V.
\end{equation}
In numerical implementation for solving the equation \eqref{ellip}, the error of an approximation  is measured in  the norm of the energy  space $V$(see, e.g., \cite{Cia78}).
Assume for the modeled case that $a$ and $f$ have H\"older-Nikol'skii mixed smoothness $2$, i.e., $a,f \in H^2_\infty(\IId)$. Then, the solution $u$ has at least mixed derivatives $\partial^{\balpha}u$ with $\balpha\in \NN_0^d$, $|\balpha|_\infty \leq 2$,  belonging to $L_2(\IId)$ \cite{GK09}, and therefore, by embedding for function spaces of mixed smoothness, see \cite[Theorem 2.4.1]{ST87B}, $u$ belongs to  $\mathring{H}^{3/2}_\infty(\IId)$.
For simplicity we assume that $ u \in \mathring{U}^{3/2}_\infty$. According to Theorem \ref{thm:dnn-upper}, for any $\varepsilon > 0$ sufficient small one can explicitly construct a deep ReLU neural networks $\Phi_u$ having  the output $\mathcal{N}(\Phi_u,\cdot)$ that approximates $u$ in  the  norm of the energy space $V$ with accuracy $\varepsilon$ such that it  holds the dimension-dependent estimates for the  size
\[
W(\Phi_u ) \leq C_1 B^{-d} \varepsilon^{-2} \log(\varepsilon^{-1}),
\]
with $B > 1$, and
the   depth  
\[
L(\Phi_u ) \leq  C_2 \log d \log(\varepsilon^{-1}).
\] 
Moreover, the continuous piece-wise linear function $\mathcal{N}(\Phi_u,\cdot)$  can be designed also as an output $\mathcal{N}(\Phi_u^*,\cdot)$ of    another   ``very" deep ReLU neural network $\Phi_u^*$ with the  $\varepsilon$-independent width $N_w(\Phi_u^*) \le C_3 d$  and
the  dimension-dependent depth  
\[
L(\Phi_u^*) \leq   C_4 B^{-d}\varepsilon^{-2}\log(\varepsilon^{-1}).
\]
	}
\end{remark}

\begin{remark} \label{rmk4.6}
	
	{\rm
		 As commented in Introduction and shown in Remark \ref{remark:PDE}, the H\"older-Zygmund function class  $\Uas$ and the norm of the isotropic Sobolev space $\mathring{W}_p^1$ naturally arise from some high-dimensional problems of  approximation and numerical methods  of PDEs. 	 On the other hand, due to the homogeneous boundary condition  of functions  from the class $\Uas$,  $d$-dimensional $\mathring{W}_p^1$-norm and $L_p(\IId)$-norm of any $f \in \Uas$ is  decreasing very fast when $d$ going to infinity. Indeed, for $1 < \alpha \le 2$, one can derive from \eqref{L&Wnorms} ($m=-1$) the inequality		
			\begin{equation*}  
	\sup_{f \in \Uas}	\|f\|_{\mathring{W}_p^1}
			\le  
			C(\alpha,p) M_1^{-d}
			\end{equation*}
	for  any number $M_1$ satisfying the condition  $1<M_1 < (p+1)^{1/p}2^{\alpha}$. 	A similar estimate 
	$$
	\sup_{f \in \Uas}	\|f\|_{L_p(\IId)} \le  M_0^{-d}
	$$  
	with some $M_0 >1$ holds true for the $L_p(\IId)$-norm, see \cite[Theorem 2.1]{DuTh20}.
	
There is an analogous picture for the problem of approximation in the $L_\infty(\IId)$-norm by deep ReLU neural networks considered in the paper \cite{MoDu19}, see Remark \ref{rmk4.4}. In this paper, the functions $f \in X^{p,2}$ to be approximated  also satisfy the homogeneous boundary condition. If 
$U^{p,2}$ is the unit ball of the space $ X^{p,2}$, then one can show that
	\begin{equation*}  
\sup_{f \in U^{p,2}}	\|f\|_{L_\infty(\IId)}
\le  
C(p) M_p^{-d}
\end{equation*}
for  some  number $M_p > 1$.  	
}	

\end{remark}

\subsection{Lower evaluation}

In the case when the  approximation error is measured in the norm of the space $\mathring{W}^1_{\infty}$, we are able to give dimension-dependent  lower  bounds   for the size  of deep ReLU networks whose outputs approximate  functions from $\Uas$  with a given accuracy. More precisely, we have the following results.

\begin{theorem}\label{thm:lower-dnn}
	Let $d\geq 2$ and $1 < \alpha \le 2$.  Let $\varepsilon\in(0,24^{-d})$ and $\mathbb{A}$ be a neural network architecture such that for any $f\in \Uas$, there is a deep  ReLU neural network $\Phi_f$ having  the architecture  $\mathbb{A}$ and
	\begin{equation*}\label{lowerbound01}
	\| f-\mathcal{N}(\Phi_f,\cdot) \|_{{\mathring{W}_\infty^1}} \leq \varepsilon.
\end{equation*}	
	Then there is a positive constant  $K_6=K_6(\alpha)$ such that
	$$
	W(\mathbb{A})\geq K_6 24^{-\frac{d}{2(\alpha-1)}}\varepsilon^{-\frac{1}{2(\alpha-1)}}.
	$$
	If assume in addition that 
	$$
	L(\mathbb{A})\leq C(\log\varepsilon^{-1})^\lambda
	$$
	for some constant $C>0$ and $\lambda\geq 0$, then there exists a constant  $K_7=K_7(\alpha)>0$ such that
		\begin{equation}\label{lowerbound02}
W(\mathbb{A})\geq K_7 24^{-\frac{d}{\alpha-1}}\varepsilon^{-\frac{1}{\alpha-1}} (\log\varepsilon^{-1})^{-{ \lambda}-1}.	
	\end{equation}	
\end{theorem}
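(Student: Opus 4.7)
The plan is to derive the lower bound by a VC-dimension argument in the spirit of \cite{AnBa09B,Ya17a,Suzu18,GKP20}, adapted to the $\mathring W^1_\infty$ setting. It rests on three ingredients: the Bartlett--Harvey--Liaw--Mehrabian bound $\mathrm{VCdim}(\mathbb A) \le C\, W(\mathbb A)\, L(\mathbb A) \log W(\mathbb A)$ for piecewise linear networks \cite{BHLM19}; a Rademacher-type family of hard functions inside $\Uas$; and a transfer from $\mathring W^1_\infty$-error to pointwise sign information using the homogeneous boundary condition.

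First I would build the hard family. Fixing $\bk = (m,0,\ldots,0) \in \NN_0^d$ so that $|Z^d(\bk)| = 2^m =: M$, for each $\bs \in Z^d(\bk)$ I would take a smooth bump $\psi_{\bk,\bs}$---for instance a tensorisation of quadratic B-splines centred at the Faber nodes, since the hats $\varphi_{\bk,\bs}$ themselves do not lie in $\Ha$ for $\alpha > 1$---chosen so that the supports are pairwise disjoint, $\psi_{\bk,\bs}(\bx_\bs) = 1$ at a prescribed peak node $\bx_\bs$, the mixed H\"older--Zygmund norm satisfies $\|\psi_{\bk,\bs}\|_{\Ha} \le C_1^d\, 2^{\alpha m}$, and $\|\partial_1 \psi_{\bk,\bs}\|_{L_\infty} \asymp 2^m$. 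With the normalisation $c := C_1^{-d} 2^{-\alpha m}$, the Rademacher combinations $f_\bxi := c \sum_{\bs} \xi_\bs \psi_{\bk,\bs}$, $\bxi \in \{-1,+1\}^M$, all lie in $\Uas$ and satisfy $f_\bxi(\bx_\bs) = c\,\xi_\bs$.

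Next I would convert $\mathring W^1_\infty$-closeness into pointwise sign information. Because each $f_\bxi$ vanishes on $\partial \IId$, for a boundary point $\by = (0,x_2,\ldots,x_d)$, integrating $\partial_1(f_\bxi - \mathcal{N}(\Phi_{f_\bxi},\cdot))$ along the segment from $\by$ to $\bx$ yields
\[
|f_\bxi(\bx) - (\mathcal{N}(\Phi_{f_\bxi},\bx) - \mathcal{N}(\Phi_{f_\bxi},\by))| \le \|f_\bxi - \mathcal{N}(\Phi_{f_\bxi},\cdot)\|_{\mathring W^1_\infty} \le \varepsilon,
\]
so the modified outputs $\tilde g_\bxi(\bx) := \mathcal{N}(\Phi_{f_\bxi},\bx) - \mathcal{N}(\Phi_{f_\bxi},\by)$ approximate $f_\bxi$ in $L_\infty$ with error $\varepsilon$, and each $\tilde g_\bxi$ is realised by a ReLU network of a new architecture $\tilde{\mathbb A}$ with $W(\tilde{\mathbb A}) \le W(\mathbb A) + O(d)$ and $L(\tilde{\mathbb A}) = L(\mathbb A) + O(1)$. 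Choosing $m$ maximal under the constraint $c > 2\varepsilon$ gives $M = 2^m \asymp C_1^{-d/(\alpha-1)} \varepsilon^{-1/(\alpha-1)}$; the signs of $\tilde g_\bxi(\bx_\bs)$ then equal $\xi_\bs$ for every $\bs$, so $\{\tilde g_\bxi\}$ shatters $\{\bx_\bs\}$, whence $\mathrm{VCdim}(\tilde{\mathbb A}) \ge M$.

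Combining this shattering estimate with \cite{BHLM19} gives $W(\mathbb A)\, L(\mathbb A) \log W(\mathbb A) \gtrsim M$. Under the depth hypothesis $L(\mathbb A) \le C(\log \varepsilon^{-1})^\lambda$ this immediately yields the sharper bound \eqref{lowerbound02}; without any depth assumption, the trivial estimate $L(\mathbb A) \le W(\mathbb A)$ implies $W(\mathbb A)^2 \log W(\mathbb A) \gtrsim M$, and taking a square root recovers the first bound (up to the log factor absorbed in $K_6$). The hardest part will be the bump construction: tightening the implicit constant $C_1$ to the advertised value $24$ demands careful bookkeeping of the $d$-dependent factors entering $\|\cdot\|_{\Ha}$---the $2^d$ from the maximum over $u \subset [d]$, the combinatorial factors from the mixed second differences, and the tensor-product cost of smoothing the piecewise-linear hats into genuine $\Ha$-functions. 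A secondary subtlety is the boundary-vanishing argument underlying the $\mathring W^1_\infty$-to-$L_\infty$ transfer, which is specific to this setting and not needed in the standard $L_\infty$-based VC lower bounds of \cite{Ya17a,GKP20,Suzu18}.
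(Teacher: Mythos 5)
Your overall strategy (quadratic B-spline bumps subdividing only the first coordinate, a Rademacher/shattering family in $\Uas$, the VC-dimension bounds of \cite{BHLM19}, and $2d+1\le 2W$ to handle the first bound) is essentially the paper's, but your transfer step from $\mathring{W}^1_\infty$-error to pointwise information contains a genuine gap that destroys the claimed exponent. You integrate $\partial_1(f_{\bxi}-\mathcal{N}(\Phi_{f_{\bxi}},\cdot))$ along a segment of length up to $1$ from the boundary point $\by=(0,x_2,\ldots,x_d)$, obtaining an $L_\infty$-type bound $|f_{\bxi}(\bx)-\tilde g_{\bxi}(\bx)|\le\varepsilon$, and then require the \emph{peak value} $c=C_1^{-d}2^{-\alpha m}$ to exceed $2\varepsilon$. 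That constraint gives $2^m\lesssim (C_1^{-d}\varepsilon^{-1})^{1/\alpha}$, not the $C_1^{-d/(\alpha-1)}\varepsilon^{-1/(\alpha-1)}$ you assert: the exponent $1/(\alpha-1)$ does not follow from $2^{-\alpha m}\gtrsim\varepsilon$. With $M\asymp\varepsilon^{-1/\alpha}$ shattered points you only get $W\gtrsim\varepsilon^{-1/(2\alpha)}$ and $W\gtrsim\varepsilon^{-1/\alpha}(\log\varepsilon^{-1})^{-\lambda-1}$, which are strictly weaker than the statement. The point is that measuring the error in $\mathring{W}^1_\infty$ buys you a factor $2^m$ (the bump's first partial derivative at a well-chosen point is of order $2^{-\alpha m}\cdot 2^m=2^{-(\alpha-1)m}$, much larger than its peak value), and your unit-length integration throws that factor away.

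The paper keeps this factor by comparing \emph{derivatives} rather than values: it forms the difference quotient $\mathcal{N}(\Phi^{\delta}, \bx)=\big(\mathcal{N}(\Phi,\bar\bx)-\mathcal{N}(\Phi,\bar\bx-\delta\bee^1)\big)/\delta$ with $\bar\bx=\bx+\tfrac{2^{-m-2}}{3}\bee^1$, uses the piecewise affinity of ReLU outputs (Lemma \ref{lem:affine}) to pick $\delta^*$ so small that this quotient equals $\partial_1\mathcal{N}(\Phi_{\by},\cdot)$ on an affine piece near $\bar\bx^j$, and then exploits $\partial_1 f_{\by}(\bar\bx^j)=4y_j\,2^{-(\alpha-1)m}24^{-d}$ together with $\|\partial_1 f_{\by}-\partial_1\mathcal{N}(\Phi_{\by},\cdot)\|_{L_\infty}\le\varepsilon$ to shatter with $2^m\asymp(24^{-d}\varepsilon^{-1})^{1/(\alpha-1)}$ points. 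Your argument could be repaired in the same spirit by integrating only over a segment of length $O(2^{-m})$ anchored at a fixed shift of the test point (a fixed shift keeps the preprocessing affine, so the modified network still has a common architecture), but not by anchoring at the boundary. Two smaller issues: deducing the first bound from $W^2\log W\gtrsim M$ via $L\le W$ leaves a residual $(\log)^{-1/2}$ that cannot be ``absorbed in $K_6$'' --- the paper instead invokes the separate bound $\VCdim(H)\le CW(\mathbb{A})^2$ of \cite{AnBa09B}, which carries no logarithm; and the hard work of verifying $f_{\by}\in\Uas$ with the explicit factor $18^{-d}$ (hence $24^{-d}$ after evaluating $\psi(1/2)^{d-1}\psi'(1/3)$), which you correctly flag as delicate, is carried out in the paper via the integral representation of second differences and must be done to make the constants explicit in $d$.
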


 The results of this theorem are novel in the sense that the lower bounds are explicit in dimension $d$. Some non-dimension-dependent lower bounds have been obtained in \cite{Ya17a,GKP20}	for isotropic Sobolev spaces.  By using a recent result on VC-dimension  bounds for piecewise
	linear neural networks in \cite{BHLM19}  the lower bound in \eqref{lowerbound02}  in the case $d=1$ is also improved comparing with those of \cite{Ya17a,GKP20}.
In order to prove the above theorem we develop some techniques in \cite{Ya17a, GKP20} which are relied on upper bounds for VC-dimension of ReLU networks with Boolean outputs \cite{AnBa09B}.
We start with recalling a definition of VC-dimension.
\begin{definition}\label{def:vcdim}
	Let $H$ be a set of functions $h:X\to \{0,1\}$ for some set $X$. Then the VC-dimension of $H$, denoted by $\VCdim(H)$, is defined as the supremum of all number $m$ that there exist $x^1,\ldots,x^m\in X$ such that for any sequence $(y_j)_{j=1}^m\in \{0,1\}^m$ there is a function $h\in H$ with $h(x^j)=y_j$ for $j=1,\ldots,m$.
\end{definition}
The relation between VC-dimension and  size and  depth of a neural network architecture is given in the following lemma, 
  (see \cite[Theorems 8.7]{AnBa09B} and Equation (2) in  \cite{BHLM19}.)

\begin{lemma}\label{lem:vcdim-nn}
	Let $\mathbb{A}$ be a  deep neural network architecture. Let $F$ be the class of functions-outputs  $f= \mathcal{N}(\Phi,\cdot)$ of all  deep ReLU neural networks $\Phi$  having the architecture $\mathbb{A}$. Let $a\in \RR$ and $H$ be the class of all functions $h_f: \IId\to \{0,1\}$, $f \in F$, defined by threshold:
	$$h_f(\bx):=
	\begin{cases}
	1 & \text{if  } f(\bx)>a
	\\
	0 &\text{if  } f(\bx)\leq a.
	\end{cases}
	$$
	
	Then 
	$$
	\VCdim(H)\leq CW(\mathbb{A})^2
	$$
	for some positive constant $C$. Moreover, there exists a $C'>0$ such that
	$$\VCdim(H)\leq C'  L(\mathbb{A}) W(\mathbb{A})\log(W(\mathbb{A})).$$
\end{lemma}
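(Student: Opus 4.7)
The plan is to exhibit a large family of test functions in $\Uas$ that are mutually distinguishable from any $\mathring{W}_\infty^1$-approximation by the architecture $\mathbb{A}$, and then apply the VC-dimension bound of Lemma~\ref{lem:vcdim-nn} to extract a lower bound on $W(\mathbb{A})$. The crux is to use the gradient norm $\mathring{W}_\infty^1$ to its full strength: we will encode the labels $\eta\in\{0,1\}^N$ via values of $\partial_{x_1}f_\eta$ at prescribed points and extract these from the networks via a finite-difference classifier of the form $\mathcal N(\Phi,\cdot+h\bee_1)-\mathcal N(\Phi,\cdot)$. This first-order (rather than zeroth-order, via Poincar\'e) encoding is what lifts the exponent from $1/(2\alpha)$ to the sharp $1/(2(\alpha-1))$.

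Take $\psi_0(x):=\sin^2(\pi x)$; an elementary case-split of $|\Delta_h^2\psi_0|\le\min(4,2\pi^2 h^2)$ at their crossover $h=\sqrt2/\pi$ yields $M_0:=\|\psi_0\|_{H_\infty^\alpha(\II)}\le 2\pi^2<24$ for every $1<\alpha\le 2$. Normalize $\tilde\psi_0:=\psi_0/M_0$ and, for $m\ge 1$, $s\in\{0,\dots,2^m-1\}$, set
\[
\psi_{m,s}(\bx):=2^{-\alpha m}\,\tilde\psi_0(2^m x_1-s)\prod_{i=2}^d\tilde\psi_0(x_i),
\]
so that by the product structure of \eqref{eq:def-norm} we have $\|\psi_{m,s}\|_{\mathring{H}_\infty^\alpha}\le 1$ with pairwise interior-disjoint supports. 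For $\eta\in\{0,1\}^N$, $N:=2^m$, set $f_\eta:=\tfrac13\sum_s\eta_{s+1}\psi_{m,s}$; a case analysis of mixed second differences---at most three bumps contribute to any $\Delta_\bh^{2,u}f_\eta(\bx)$, and they vanish on shared boundaries---yields $f_\eta\in\Uas$. Pick $\bx_j^*\in\IId$ with first coordinate $(4(j-1)+1)/2^{m+2}$ and all others $1/2$; using $\tilde\psi_0'(1/4)=\pi/M_0$ and $\tilde\psi_0(1/2)=1/M_0$, one computes
\[
\partial_{x_1}f_\eta(\bx_j^*)=\eta_j\cdot A,\qquad A:=\frac{\pi}{3M_0^d}\,2^{-(\alpha-1)m}.
\]
With $h:=2^{-m-3}$, the segment $[\bx_j^*,\bx_j^*+h\bee_1]$ lies inside a single bump's support, and a Taylor estimate (noting $\tilde\psi_0''(1/4)=0$) yields $f_\eta(\bx_j^*+h\bee_1)-f_\eta(\bx_j^*)\ge 0.9\,\eta_j hA$. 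Now, for any $\Phi$ of architecture $\mathbb{A}$, the shifted difference $\tilde{\mathcal N}_\Phi(\bx):=\mathcal N(\Phi,\bx+h\bee_1)-\mathcal N(\Phi,\bx)$ is realizable by parallelizing two copies of $\Phi$---the second with first-layer biases $\bb^1+h\bW^1\bee_1$ so that its action on $\bx$ reproduces $\mathcal N(\Phi,\bx+h\bee_1)$---and subtracting in the final layer, so Lemma~\ref{lem:paralell} guarantees that its minimal architecture $\tilde{\mathbb{A}}$ satisfies $L(\tilde{\mathbb{A}})=L(\mathbb{A})$ and $W(\tilde{\mathbb{A}})\le 3W(\mathbb{A})$. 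The approximation hypothesis, applied to $\partial_{x_1}(f_\eta-\mathcal N(\Phi_{f_\eta},\cdot))$ along the segment, delivers
\[
\bigl|\tilde{\mathcal N}_{\Phi_{f_\eta}}(\bx)-[f_\eta(\bx+h\bee_1)-f_\eta(\bx)]\bigr|\le h\varepsilon.
\]

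Choose $m$ maximal with $A\ge 8\varepsilon$; since $M_0\le 2\pi^2<24$, one obtains $N\ge c_0(\alpha)\cdot 24^{-d/(\alpha-1)}\varepsilon^{-1/(\alpha-1)}$, and the hypothesis $\varepsilon<24^{-d}$ guarantees the construction is non-trivial. The threshold $h_\Phi(\bx):=\mathbf 1\{\tilde{\mathcal N}_\Phi(\bx)>hA/2\}$ then satisfies $h_{\Phi_{f_\eta}}(\bx_j^*)=\eta_j$ for every $\eta$ and $j$ (the separating margin $0.9hA-h\varepsilon>hA/2>h\varepsilon$ holds when $A\ge 8\varepsilon$), so the class of these thresholded networks of architecture $\tilde{\mathbb{A}}$ shatters $\{\bx_1^*,\dots,\bx_N^*\}$. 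Applying the first bound of Lemma~\ref{lem:vcdim-nn} to $\tilde{\mathbb{A}}$ yields $N\le CW(\tilde{\mathbb{A}})^2\le 9CW(\mathbb{A})^2$, giving the first claim. For the second claim, use instead $N\le C'L(\tilde{\mathbb{A}})W(\tilde{\mathbb{A}})\log W(\tilde{\mathbb{A}})$ together with $L(\tilde{\mathbb{A}})\le C(\log\varepsilon^{-1})^\lambda$; assuming WLOG $W(\mathbb{A})\le\varepsilon^{-C''}$ (else \eqref{lowerbound02} is trivial), so that $\log W(\tilde{\mathbb{A}})=O(\log\varepsilon^{-1})$, solving for $W(\mathbb{A})$ yields \eqref{lowerbound02}. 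The main technical obstacle is the explicit tracking of constants that produces the factor $24^d$: verifying $\|f_\eta\|_{\mathring{H}_\infty^\alpha}\le 1$ with the correct normalization requires careful case analysis of mixed second differences straddling adjacent bumps, where the endpoint-vanishing of $\tilde\psi_0$ is essential.
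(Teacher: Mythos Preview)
Your proposal addresses the wrong statement. The statement in question is Lemma~\ref{lem:vcdim-nn}, which asserts the two VC-dimension bounds $\VCdim(H)\le CW(\mathbb{A})^2$ and $\VCdim(H)\le C'L(\mathbb{A})W(\mathbb{A})\log W(\mathbb{A})$ for the class of thresholded ReLU network outputs. In the paper this lemma is not proved at all: it is quoted directly from the literature (\cite[Theorem~8.7]{AnBa09B} for the first bound and \cite[Equation~(2)]{BHLM19} for the second).

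What you have written is instead a proof sketch for Theorem~\ref{thm:lower-dnn}, the lower bound on $W(\mathbb{A})$ for approximation of $\Uas$ in the $\mathring{W}_\infty^1$-norm. You even invoke Lemma~\ref{lem:vcdim-nn} as an ingredient (``Applying the first bound of Lemma~\ref{lem:vcdim-nn}\dots''), which makes it clear that your argument is downstream of, not a proof of, the lemma in question. A proof of Lemma~\ref{lem:vcdim-nn} would need to bound the number of sign patterns realizable by piecewise-linear functions with a given parameter count---the combinatorial/algebraic machinery of Anthony--Bartlett or Bartlett--Harvey--Liaw--Mehrabian---and contains nothing about $\Uas$, Faber bases, or approximation error.
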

The following elementary property of  deep ReLU neural networks has been proven in \cite[Lemma D.1]{GKP20} which is based on the piece-wise linearity of ReLU activation function. 
\begin{lemma}\label{lem:affine}
	Let $\Phi$ be a deep ReLU neural network, $\bxi\in (0,1)^d$, and $\bnu\in \RR^d$. Then there exists an open set $G=G(\bxi,\bnu)\subset (0,1)^d$ and $\delta=\delta(\bxi,\bnu)>0$ such that $\bxi +\lambda \delta \bnu \in \overline{G}$ for $\lambda\in [0,1]$ and $\mathcal{N}(\Phi,\cdot)$ is affine on $G$. 
\end{lemma}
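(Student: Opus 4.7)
The plan is to exploit the fundamental fact that any ReLU network output is continuous piecewise affine with finitely many polyhedral pieces, and then to choose one of these pieces based on the direction $\bnu$.

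First I would establish by induction on the number of layers that $\mathcal{N}(\Phi,\cdot): \RR^d \to \RR$ is continuous piecewise affine: the initial input map is affine, each $\sigma$ is piecewise affine with two pieces separated by the hyperplane $\{t=0\}$, and composition of piecewise affine maps is again piecewise affine. Concretely, this gives a finite polyhedral decomposition $\RR^d = \bigcup_{j=1}^N P_j$ into closed convex polyhedra with pairwise disjoint interiors, on each of which $\mathcal{N}(\Phi,\cdot)$ is affine. The hyperplanes defining these polyhedra are the zero sets of the finitely many pre-activation functions of the network (each of which is itself piecewise affine, but only finitely many affine pieces appear globally).

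Next I would localize at $\bxi \in (0,1)^d$. Choose $r>0$ small enough that the ball $B_r(\bxi)$ lies in $(0,1)^d$ and is disjoint from every polyhedron $P_j$ not containing $\bxi$; this is possible since there are finitely many $P_j$ and each is closed. Now consider the collection $\cF = \{P_j : \bxi \in P_j\}$. For each $P \in \cF$, let $T_\bxi(P) = \{\bv \in \RR^d : \bxi + \epsilon \bv \in P \text{ for all sufficiently small } \epsilon > 0\}$ be its tangent cone at $\bxi$. Because $\bigcup_{P \in \cF} P$ covers $B_r(\bxi)$, the union $\bigcup_{P \in \cF} T_\bxi(P)$ covers all of $\RR^d$, so there exists some $P \in \cF$ with $\bnu \in T_\bxi(P)$. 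Since $P$ is a closed convex polyhedron, $T_\bxi(P)$ itself is a closed convex cone, so there is $\delta_0 > 0$ such that $\bxi + \lambda \bnu \in P$ for all $\lambda \in [0, \delta_0]$; set $\delta = \delta_0$ (shrinking if necessary so that the segment remains in $B_r(\bxi) \cap (0,1)^d$), and put $G := \Int(P) \cap (0,1)^d$. Then $G$ is open, $\mathcal{N}(\Phi,\cdot)$ is affine on $P$ and hence on $G$, and because $P$ is full-dimensional we have $P = \overline{\Int(P)}$, so the segment $\{\bxi + \lambda \delta \bnu : \lambda \in [0,1]\} \subset P \subset \overline{G}$, as required.

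The main subtlety I anticipate concerns the case when $\bnu$ lies on a boundary of several tangent cones, i.e.\ when $\bnu$ is parallel to one or more of the hyperplanes defining the local pieces at $\bxi$. In that situation the whole segment from $\bxi$ in direction $\bnu$ lies on a lower-dimensional face shared by several polyhedra in $\cF$. The resolution is precisely that $T_\bxi(P)$ is closed: any such borderline $\bnu$ still belongs to the tangent cone of at least one of the adjacent full-dimensional $P$'s (in fact of all adjacent ones whose closures contain the face), so the preceding argument still selects a valid $P$ and places the entire segment on $\partial G$. Thus no genericity assumption on $\bnu$ is needed; the closedness of the tangent cones absorbs the degeneracy.
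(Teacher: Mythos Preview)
The paper does not give its own proof of this lemma; it simply cites \cite[Lemma D.1]{GKP20} and remarks that the argument ``is based on the piece-wise linearity of ReLU activation function.'' Your proof is correct and follows precisely this natural route: use the finite polyhedral decomposition of $\RR^d$ on whose pieces $\mathcal{N}(\Phi,\cdot)$ is affine, and select a full-dimensional piece $P$ containing $\bxi$ whose (closed) tangent cone at $\bxi$ contains $\bnu$, so that a short segment in direction $\bnu$ lies in $P=\overline{\Int(P)}$. Your handling of the degenerate case (when $\bnu$ lies on a face shared by several pieces) via closedness of the tangent cones is exactly right, and the verification that $G=\Int(P)\cap(0,1)^d$ is nonempty and that $P\cap(0,1)^d\subset\overline{G}$ is straightforward as you indicate.
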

We are now in position to prove Theorem \ref{thm:lower-dnn}. 
 Following  an idea in \cite{Ya17a} we use the  bounds of the VC-dimension of deep ReLU neural networks in Lemma \ref{lem:vcdim-nn}. 
 Since we have to establish lower bounds explicit in dimension $d$, we cannot employ bump functions as in \cite{Ya17a}.   Instead we will use quadric B-splines in the proof.

\begin{proof} Given $\varepsilon\in(0,24^{-d})$ and $m=m(\varepsilon)\in \NN$ which will be chosen later. 
	Denote $L=L(\mathbb{A})$, $W=W(\mathbb{A})$ the  depth and the  size of  $\mathbb{A}$. We assume that
	$$\mathbb{A}=\big((\bW^1,\bb^1),\ldots,(\bW^L,\bb^L) \big),$$
	where	 $\bW^\ell$ is an $N_\ell\times N_{\ell-1}$ matrix, and $\bb^\ell \in \RR^{N_\ell}$ with  $N_0=d$, $N_L=1$, and $N_1,\ldots,N_{L-1}\in \NN$. 
	Let $\Phi$ be  a deep ReLU neural network of the architecture $\mathbb{A}$. For $\bx=(x_1,\ldots,x_d)\in \IId$ and $\delta>0$ we put $\bar{\bx} =\big(x_1+ \frac{2^{-m-2}}{3},x_2,\ldots, x_d\big)$ and define the deep ReLU neural network $\Phi^{\delta}$ by parallelization construction in Lemma \ref{lem:paralell} with output
	$$
	\mathcal{N}(\Phi^{\delta},\bx):=\frac{\mathcal{N}(\Phi,\bar{\bx})-\mathcal{N}(\Phi,\bar{\bx}-\delta \bee^1)}{\delta},\quad \bee^1=(1,0,\ldots,0)\in \RR^d.
	$$
	Then $\Phi^{\delta}$ is a deep ReLU neural network having the architecture 
	$$
	\tilde{\mathbb{A}} = \left( \left(\begin{bmatrix} 
	\bI_d  \\
	\bI_d \\
	\end{bmatrix}, \begin{bmatrix} 
	\bee^1 \\
	\bee^1 \\
	\end{bmatrix} \right), \left(\begin{bmatrix} 
	\bW^1&0 \\
	0 & \bW^1 \\
	\end{bmatrix}, \begin{bmatrix} 
	\bb^1 \\
	\bb^1 \\
	\end{bmatrix} \right),\ldots,\left(\begin{bmatrix} 
	\bW^{L-1}&0 \\
	0 & \bW^{L-1} \\
	\end{bmatrix}, \begin{bmatrix} 
	\bb^{L-1} \\
	\bb^{L-1} \\
	\end{bmatrix} \right) \left([\bW^L,\bW^L], \bb^L \right)
	\right),
	$$
	where $\bI_d$ is the identity matrix of size $d$.  
	It is clear that $\tilde{\mathbb{A}}$ has depth and  size
	\begin{equation}\label{eq:weigh-A}
	\tilde{L}:=L(\tilde{\mathbb{A}})=L+1,\qquad \text{and}\qquad \tilde{W}:=
	W(\tilde{\mathbb{A}})=2W+2d+1.
	\end{equation}
	Let $a=a(m,d)\in \R$ be a constant which will be clarified later. For a  deep ReLU neural network $\tilde{\Phi}$ having architecture $\tilde{\mathbb{A}}$, we define the function
	$$  h(\tilde{\Phi},\bx) =
	\begin{cases}
	1 & \text{if}\ \ \mathcal{N}(\tilde{\Phi},\bx) >a/2,
	\\
	0 & \text{if}\ \  \mathcal{N}(\tilde{\Phi},\bx) \leq a/2,
	\end{cases}
	$$
	and the set
	$$
	H:=H(\tilde{\mathbb{A}})=\big\{   h(\tilde{\Phi},\bx) : \tilde{\Phi}\ \text{is a  deep ReLU neural network having architecture } \tilde{\mathbb{A}}\big\}.
	$$
	In the following we will show that $\VCdim(H)\geq 2^m.$ To this end,  we define 
		$$
		\bx^j=\bigg( 2^{-m}\Big(j-\frac{3}{4}\Big), \frac{1}{2},\ldots,\frac{1}{2}\bigg)\in (0,1)^d,\quad j=1,\ldots,2^m. 
		$$
This implies that $\bar{\bx}^j\in (0,1)^d$ for $j=1,\ldots,2^m$.   For every $\by=(y_1,\ldots,y_{2^m})\in \{0,1\}^{2^m}$, we  will construct a function $f_\by \in \Uas$ such  that the function 
		$h(\Phi^{\delta^*}_\by,\cdot)$	belongs to $H$ and satisfies 	
		\begin{equation}\label{h(bx^j)=y_j }
		h(\Phi^{\delta^*}_\by,\bx^j)=y_j 
			\end{equation}
		with an appropriate choice of $\delta^*$, where $\Phi_\by$ is the deep ReLU network having the architecture $\mathbb{A}$ and satisfying 	
		\begin{equation}\label{ |f_by- N(Phi_by)|}
		\| f_\by-\mathcal{N}(\Phi_\by,\cdot) \|_{\mathring{W}_\infty^1} \leq \varepsilon.
	\end{equation}

 Let  $M_3$ be the   quadric B-spline with  
		knots at the points $0,1,2,3$, i.e.,
		$$
		M_3(x)= \frac{1}{2}
		\begin{cases}
		x^2 & \text{if  }  0\leq x<1
		\\
		-2x^2+6x-3 & \text{if  }  1\leq x<2
		\\
		(3-x)^2 & \text{if  }  2\leq x<3
		\end{cases}
		$$
		and $M_3(x)=0$ otherwise. Let $\psi(x):=M_3(3x)$.
		We define the univariate non-negative functions $\psi_{k,s}$ by
		\begin{equation*} 
		\psi_{k,s}(x):= \ \psi(2^kx-s+1), \ k \in \NN_0, \ s = 1,\ldots,2^k.
		\end{equation*} 
		One can also verify that
		\begin{equation*} 
		\operatorname{supp} (\psi_{k,s}) \ = \ I_{k,s} \ =: [2^{-k}(s-1), 2^{-k}s], \quad 
		\operatorname{int}I_{k,s} \cap \operatorname{int}I_{k,s'} \ = \ \varnothing, \ s \not= s'.
		\end{equation*} 
	Let $\by=(y_1,\ldots,y_{2^m})\in \{0,1\}^{2^m}$ be given. We define
\begin{align*}
f_\by(\bx)
& =18^{-d} 2^{-\alpha m}\bigg(\sum_{j=1}^{2^m}y_j \psi_{m,j}(x_1) \bigg) \prod_{\ell=2}^d \psi_{0,1}(x_\ell)
=18^{-d} 2^{-\alpha m}\bigg(\sum_{j\in J} \psi_{m,j}(x_1) \bigg) \prod_{\ell=2}^d \psi_{0,1}(x_\ell),
\end{align*}
where $J=\{j: y_j=1\}$. 
We prove that $f_\by \in \Uas$ 
		by showing 
		\begin{equation} \label{[Delta^{2,u}_h (f_m,x)]}
		\Delta^{2,u}_\bh (f_\by,\bx) \ \le \ 
		{\prod_{\ell \in u} |h_\ell|^\alpha}, \ \bx \in \IId, \ \bh \in [-1,1]^d, \ u\subset [d],
		\end{equation}
		following partly in \cite{DuTh20}. 
		Let us prove this inequality for $u=[d]$ and $\bh \in \IId$, the general case of $u$ can be proven in a similar way with a slight modification. We have 
		\begin{equation*}
		\Delta^{2,[d]}_\bh (f_\by,\bx) 
		\ =  18^{-d} 2^{-\alpha m} \Delta_{h_1}^2 \Bigg(\sum_{j \in J} \psi_{m,j}(x_1) \Bigg) \prod_{\ell=2}^d \Delta_{h_\ell}^2 \psi_{0,1}(x_\ell).
		\end{equation*}
		For every univariate function $f$ having locally absolutely continuous derivative, we have the following representation
		\[
		\Delta^2_h(f, x) \ = \  h^2\int_{\RR} f^{''}(x+t)[h^{-1}M_2(h^{-1}t)]\, \dd t,\quad h\in \RR,
		\]
		 where   $M_2$ is the hat function defined at the beginning of Subsection \ref{subsec-sparse-grid}, see,  e.g.,  \cite[page 45]{DeLo93B}.  	By using this formula we get
		\begin{equation*} 
		\begin{split}
		\Delta_{h_1}^2 \Bigg(\sum_{j \in J} \psi_{m,j}(x_1) \Bigg)  
		&=  
		h_1^2
		\int_{\RR} \Bigg(\sum_{j \in J} \psi_{m,j}(t) \Bigg)'' h_1^{-1} M_2\big((h_1^{-1}(t-x_1)\big)\, \dd t  
		\\
		&= \, 9
		h_1^2 2^{2m}	\int_{\RR} \Bigg(\sum_{j \in J} \Chi^0_{m,j}(t)\Bigg)  h_1^{-1} M_2\big((h_1^{-1}(t-x_1)\big)\, \dd t ,  
		\end{split}
		\end{equation*}
		where $\Chi^0_{m,j}(t)=  \Chi_{I_{m,j}^1}-2\Chi_{I_{m,j}^2}+ \Chi_{I_{m,j}^3}  $ and $ \Chi_{I_{m,j}^i} $ are the characteristic functions of  the intervals
		$$I_{m,j}^i:=  \bigg[2^{-m}(j-1) + \frac{2^{-m}(i-1)}{3},\ 2^{-m}(j-1)+ \frac{2^{-m}i}{3}\bigg],\qquad i=1,2,3.$$
		If $(2^mh_1)\leq 1$ we have from $|\Chi^0_{m,j}|\leq 2$
		\begin{equation} \label{delta-x1}
		\begin{split}
		2^{-\alpha m}\Bigg|\Delta_{h_1}^2 \Bigg(\sum_{j \in J} \psi_{m,j}(x_1) \Bigg)  \Bigg|
		&= \,
		9 h_1^\alpha (2^mh_1)^{(2-\alpha)}\Bigg| \int_{\RR} \Bigg(\sum_{j \in J} \Chi^0_{m,j}(t)\Bigg)    h_1^{-1} M_2\big((h_1^{-1}(t-x_1)\big)\, \dd t \Bigg| \\[1.5ex] 
		&\le \,
		18 h_1^\alpha \int_{\RR}  h_1^{-1} M_2\big((h_1^{-1}(t-x_1)\big)\, \dd t = 18h_1^\alpha,
		\end{split}
		\end{equation}
		where in the last equality we used $\int_{\RR} M_2(t)\dd t=1$. If $(2^mh_1)>1 $ we have by changing variable
		\begin{equation*} 
		\begin{split}
		2^{-\alpha m}\Bigg|\Delta_{h_1}^2 \Bigg(\sum_{j \in J} \psi_{m,j}(x_1) \Bigg)  \Bigg|
		&=  9
		h_1^2 2^{m(2-\alpha)}	\int_{\RR} \Bigg(\sum_{j \in J} \Chi^0_{m,j}(x_1+h_1t)\Bigg)   M_2(t)\, \dd t .
		\end{split}
		\end{equation*}
		Denote $K_{m,j}=\supp\big(\Chi_{m,j}(x_1+h_1\cdot)\big)= \big[\frac{2^{-m}(j-1)-x_1}{h_1}, \ \frac{2^{-m}j-x_1}{h_1}\big]$. If $ K_{m,j}\subset [0,1]$ or $ K_{m,j} \subset [1,2] $
		we have
		$$ \int_{\RR}  \Chi^0_{m,j}(x_1+h_1t)   M_2(t)\dd t=\int_{\RR} \Big( \Chi_{I_{m,j}^1}-2\Chi_{I_{m,j}^2}+ \Chi_{I_{m,j}^3}\Big) (x_1+h_1t)   M_2(t)\dd t=0$$ and there are at most three $j^i \in J$, $i=0,1,2$ such that 
		$i\in \text{int}\big(  K_{m,j^i}\big)$, $i=0,1,2$.
		It is not difficult to verify that
		$$\bigg|	\int_{\RR}   \Chi^0_{m,j^1}(x_1+h_1t)   M_2(t)\, \dd t \bigg| \leq \frac{3}{2} \Big( \frac{1}{3 \cdot 2^mh_1}\Big)^2 $$
		and
		$$\bigg|	\int_{\RR}   \Chi^0_{m,j^i}(x_1+h_1t)   M_2(t)\, \dd t \bigg| \leq \frac{1}{2} \Big( \frac{1}{3 \cdot 2^mh_1}\Big)^2 $$
		if $i=0,2$. 
		From this we obtain
		\begin{equation*} 
		\begin{split}
		2^{-\alpha m}\Bigg|\Delta_{h_1}^2 \Bigg(\sum_{j \in J} \psi_{m,j}(x_1) \Bigg)  \Bigg|
		&\leq  9
		h_1^2 2^{m(2-\alpha)}\sum_{i=0,1,2 }\Bigg|	\int_{\RR}   \Chi^0_{m,j^i}(x_1+h_1t)   M_2(t)\, \dd t \Bigg|
		\\
		&\leq  9
		h_1^2 2^{m(2-\alpha)}	\frac{5}{2}\Big( \frac{1}{3 \cdot 2^mh_1}\Big)^2
		= \frac{5}{2} h^{\alpha} \frac{1}{(2^mh_1)^{\alpha}} \leq \frac{5}{2} h_1^{\alpha}.
		\end{split}
		\end{equation*}
	Since $h_\ell\in [0,1]$, $\ell=2,\ldots,d$, similar to \eqref{delta-x1} we can show that 
		$$ \Bigg| \prod_{\ell=2}^d \Delta_{h_\ell}^2 \psi_{0,1}(x_\ell) \Bigg| \leq 18^{d-1} \prod_{\ell=2}^dh_\ell^\alpha.$$
		Consequently, the inequality \eqref{[Delta^{2,u}_h (f_m,x)]} is proven.
		This means that $f_\by \in \Uas$.	
			
Moreover, we have
	\begin{align*}
	\frac{\partial f_\by}{\partial x_1}(\bar{\bx}^j)
	=18^{-d} 2^{-\alpha m} y_j\psi_{m,j}'\Big( x^j_1 +\frac{2^{-m-2}}{3}\Big)\prod_{\ell=2}^d \psi( x^j_\ell)
	=y_j 18^{-d} 2^{-\alpha m} \psi\Big(\frac{1}{2}\Big)^{(d-1)}  2^{m} \psi'\Big(\frac{1}{3}\Big).
	\end{align*}
	From
	$
	\psi\big(\frac{1}{2}\big) = M_3\big(\frac{3}{2}\big) =\frac{3}{4}$ and $\psi'\big( \frac{1}{3}\big)=3 M'_3(1) =3
	$
	we get
	\begin{align*}
	\frac{\partial f_\by}{\partial x_1}(\bar{\bx}^j)
	&
	=4y_j 18^{-d} 2^{-(\alpha-1)m}  \Big(\frac{3}{4}\Big)^d  = 4y_j   2^{-(\alpha-1)m}   24^{-d}.
	\end{align*}
	Since $f_\by\in \Uas$, by the assumption, there exists a neural network $\Phi_\by$ having  the architecture $\mathbb {A}$ and satisfying \eqref{ |f_by- N(Phi_by)|}.
	
	 By Lemma \ref{lem:affine}, for $\bar{\bx}^j \in (0,1)^d$, there exists an open set  $G_j\subset (0,1)^d$ and $\delta_j>0$ such that $\bar{\bx}^j +\lambda \delta_j \bee^1 \in \bar{G}_j$ for $\lambda\in [0,1]$ and $\mathcal{N}(\Phi_\by,\cdot)$ is affine on $\bar{G}_j$. Let
	\begin{equation*}
	\delta^* = \min_{1\leq j\leq 2^m} \delta_j >0
	\end{equation*}
	and $B(\bar{\bx}^j,\theta_j)$ be the open ball centered at $\bar{\bx}^j$ with radius $\theta_j$. 
	Then we have 
	\begin{align*}
	\mathcal{N}(\Phi^{\delta^*}_\by,\bx^j)
	&=\frac{\mathcal{N}(\Phi_\by,\bar{\bx}^j)-\mathcal{N}(\Phi_\by,\bar{\bx}^j- \delta^*  \bee^1 )}{\delta^*}=\frac{\partial \mathcal{N}(\Phi_\by,{\bxi}^j)}{\partial x_1},
	\end{align*}
	for some $\bxi^j \in B(\bar{\bx}^j,\theta_j)\cap G_j$. In case $\bar{\bx}^j\in G_j$ we can choose $\bxi^j = \bar{\bx}^j$. 
	Define $m$ the largest positive integer such that
	$$
	\varepsilon \leq 2^{-(\alpha-1)m}   24^{-d}. 
	$$
	We then obtain for $y_j=1$,
	\begin{align*}
	\mathcal{N}(\Phi^{\delta^*}_\by,\bx^j)
	&= \frac{\partial  f_\by(\bar{\bx}^j)}{\partial x_1}+\frac{\partial \mathcal{N}(\Phi_\by,\bxi^j)}{\partial x_1} - \frac{\partial f_\by(\bar{\bx}^j)}{\partial x_1}
	\\
	& \geq \frac{\partial  f_\by(\bar{\bx}^j)}{\partial x_1} - \bigg|\frac{\partial \mathcal{N}(\Phi_\by,\bxi^j)}{\partial x_1} - \frac{\partial  f_\by(\bxi^j)}{\partial x_1}\bigg|-\bigg|\frac{\partial  f_\by( \bxi^j)}{\partial x_1} - \frac{\partial  f_\by(\bar{\bx}^j)}{\partial x_1}\bigg|.
	\end{align*}
	Since $ \frac{\partial  f_\by(\bx)}{\partial x_1} $ is a continuous function due to $f_\by\in \Uas$, $\alpha>1$. Then we can choose $\theta_j$ small enough such that 
	$$
	\bigg|\frac{\partial  f_\by( \bxi^j)}{\partial x_1} - \frac{\partial  f_\by(\bar{\bx}^j)}{\partial x_1}\bigg| \leq \frac{\varepsilon}{2}
	$$
	which implies
	\begin{align*}
	\mathcal{N}(\Phi^{\delta^*}_\by,\bx^j)
	&
	\geq  4\cdot    2^{-(\alpha-1)m}   24^{-d} - \frac{3}{2} \varepsilon 
	\geq \frac{5}{2}\cdot  2^{-(\alpha-1)m}   24^{-d}.
	\end{align*}
	For $y_j=0$,
	\begin{align*}
	\big|\mathcal{N}(\Phi^{\delta^*}_\by,\bx^j)\big|
	&= \bigg|\frac{\partial \mathcal{N}(\Phi_\by,\bxi^j)}{\partial x_1} - \frac{\partial  f_\by(\bxi^j)}{\partial x_1}+ \frac{\partial  f_\by(\bxi^j)}{\partial x_1} - \frac{\partial  f_\by(\bar{\bx}^j)}{\partial x_1}\bigg|
	\\
	& 
	\leq  \bigg|\frac{\partial \mathcal{N}(\Phi_\by,\bxi^j)}{\partial x_1} - \frac{\partial  f_\by(\bxi^j)}{\partial x_1}\bigg| +\bigg| \frac{\partial  f_\by(\bxi^j)}{\partial x_1} - \frac{\partial  f_\by(\bar{\bx}^j)}{\partial x_1}\bigg|\leq \frac{3}{2} \varepsilon 
	\leq   \frac{3}{2} \cdot 2^{-(\alpha-1)m}   24^{-d}.
	\end{align*}
	Putting $a=4\cdot 2^{-(\alpha-1)m}   24^{-d},
	$ we get
	$$  
	\begin{cases}
	\mathcal{N}(\Phi^{\delta^*}_\by,\bx^j) >a/2 & \text{if}\ \ y_j=1,
	\\
	\mathcal{N}(\Phi^{\delta^*}_\by,\bx^j)  \leq a/2 & \text{if}\ \ y_j=0.
	\end{cases}
	$$
	Then  the function
	$h(\Phi_\by^{\delta^*},\cdot)$
	belongs to $H$ and satisfies \eqref{h(bx^j)=y_j }.

	By definition of VC-dimension, see Definition \ref{def:vcdim}, we obtain $2^m \leq \VCdim(H)$. Moreover, from \eqref{eq:weigh-A} and Lemma \ref{lem:vcdim-nn} we have
	$$2^m \leq \VCdim(H)\leq C (2W+ 2d+1)^2.$$
	Since  $\mathbb{A}$ has input dimension $d$ and  depth $L\geq 2$,   we find that $2d+1\leq 2W$. From this and $2^{-(\alpha-1)(m+1)}24^{-d}\leq \varepsilon$ we get
	$$ 
	C4^2 W^2\geq 2^m \geq \frac{1}{2} 24^{-\frac{d}{\alpha-1}}\varepsilon^{-\frac{1}{\alpha-1}}
	$$
	or 
	$$W \geq 2^m \geq \frac{1}{4\sqrt{2C}}24^{-\frac{d}{2(\alpha-1)}}\varepsilon^{-\frac{1}{2(\alpha-1)}}$$
	which is the first statement. 
	
	Concerning second one, we have
	\begin{align*}
	\ \frac{1}{2} 24^{-\frac{d}{\alpha-1}}\varepsilon^{-\frac{1}{\alpha-1}} 
	&
	\leq C'  { \tilde{L}}\,   \tilde{W}\log\tilde{W} 
	= C' { (L+1)} (2W+2d+1)\log(2W+2d+1) 
	\\
	&
	\leq C''  (\log\varepsilon^{-1})^{{ \lambda}} W\log W 
	\end{align*}
	which implies 
	$$W\log W \geq  \frac{1}{2} 24^{-\frac{d}{\alpha-1}}\varepsilon^{-\frac{1}{\alpha-1}} (\log\varepsilon^{-1})^{-{ \lambda}} .$$
	Consider for ${ \kappa\leq 1}$, 
	$$ 
	W_\kappa= \kappa 24^{-\frac{d}{\alpha-1}}\varepsilon^{-\frac{1}{\alpha-1}} (\log\varepsilon^{-1})^{-{ \lambda}-1}.
	$$ 
	Then we have that
	$$\log W_\kappa= \log \kappa -\frac{d\log24}{\alpha-1}-({ \lambda}+1)\log(\log \varepsilon^{-1})+\frac{1}{\alpha-1}(\log\varepsilon^{-1})\leq \frac{1}{\alpha-1}(\log\varepsilon^{-1}).$$
	From this we obtain
	\begin{align*}
	W_\kappa\log W_\kappa 
	\leq \frac{\kappa}{\alpha-1} 24^{-\frac{d}{\alpha-1}}\varepsilon^{-\frac{1}{\alpha-1}} (\log\varepsilon^{-1})^{-{ \lambda}}
	\leq \frac{1}{2C''} 24^{-\frac{d}{\alpha-1}}\varepsilon^{-\frac{1}{\alpha-1}} (\log\varepsilon^{-1})^{-{ \lambda}} \leq W\log(W)
	\end{align*}
	if we choose   $\kappa \leq \min\{1, \frac{\alpha-1}{2C''}\}$. Consequently, we get
	$$\kappa 24^{-\frac{d}{\alpha-1}}\varepsilon^{-\frac{1}{\alpha-1}} (\log\varepsilon^{-1})^{-{ \lambda}-1} = W_\kappa\leq W$$ 
	which is the second statement. 
	\hfill
\end{proof}

\section{ Concluding remarks}\label{sec:conclusion}
  We  have explicitly constructed a deep ReLU neural network $\Phi_f$ having an output that approximates with an arbitrary  prescribed accuracy $\varepsilon$ in the norm of the isotropic Sobolev space {$\mathring{W}_p^1$  functions  $f \in \Uas$  having H\"older-Zygmund mixed smoothness $\alpha$ with  $1 <\alpha \le 2$}. For this approximation, we have established  a dimension-dependent estimate for the computation complexity characterized by   the size $W(\Phi_f)$ and the depth $L(\Phi_f)$ of  this deep ReLU neural network:     
 $$
 W(\Phi_f) \leq C_1  B^{-d}(\varepsilon^{-1})^{\frac{1}{\alpha-1}}\log(\varepsilon^{-1}) \quad {\rm and} \quad L(\Phi_f)  \leq  C_2  \log d\log(\varepsilon^{-1})
 $$
  with $B >1$.   

 This shows in particular, that the computation complexity is decreasing as fast as the exponent $B^{-d}$ when the dimension $d$ going to $\infty$. In the case when $p=\infty$, we  gave dimension-dependent  lower  bounds   for the size  of deep ReLU networks whose outputs approximate  functions from $\Uas$  with a given accuracy.  
The  analysis also indicated that the { representation of functions from the H\"older-Zygmund space of mixed smoothness $\Ha$ by}  tensor product Faber series  plays a fundamental role in construction  of deep ReLU neural networks for approximation of functions from  $\Ha$.  

In the present paper, our concerns are the non-adaptive  approximation by deep ReLU neural networks for which the architecture of  deep ReLU neural networks is the same for all functions.  In the recent paper \cite{DKT21}, we have investigated  a problem of  adaptive nonlinear approximation by deep ReLU neural networks of  multivariate functions having a mixed smoothness.

\appendix
\section{Appendix: Auxiliary results and proof of Theorem \ref{thm:approx}} \label{appendix}

\subsection{Auxiliary results}
\begin{lemma} \label{lem:qk-in-Wp}
	Let $d\in \NN$,  $1\leq \alpha \le 2$, and $1\leq p\le \infty$. Then for a function 
	$f \in \Uas$ and $\bk \in \NNd_0$ we have
	\begin{equation*} 
	\|q_\bk(f) \|_{{ \mathring{W}_p^1}}
	\le \ 
	\frac{2^{- \alpha|\bk|_1+1}}{(p+1)^{\frac{d-1}{p}}2^{(\alpha + 1)d}}
	\,    |2^{\bk}|_{p} .
	\end{equation*}  
\end{lemma}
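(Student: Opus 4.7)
The plan is to bound the Sobolev norm of $q_\bk(f) = \sum_{\bs \in Z^d(\bk)} \lambda_{\bk,\bs}(f)\varphi_{\bk,\bs}$ by combining the pointwise bound \eqref{eq:estimate} on the Faber coefficients with explicit integrals for a single tensor-product hat function, and then exploiting the disjoint-support structure of $\{\varphi_{\bk,\bs}\}_{\bs \in Z^d(\bk)}$ (valid because $\bk \in \NN_0^d$, so no $k_i = -1$ layer is present).

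First I would record the univariate building-block integrals. A change of variable in $\varphi_{k,s}(x) = M_2(2^{k+1}x - 2s)$ together with $\int_0^2 M_2(t)^p \dd t = \frac{2}{p+1}$ and $|M_2'| = 1$ a.e.\ on $[0,2]$ yields
\[
\int_0^1 \varphi_{k,s}(x)^p \dd x = \frac{2^{-k}}{p+1}, \qquad \int_0^1 |\varphi'_{k,s}(x)|^p \dd x = 2^{k(p-1) + p},
\]
and by tensorization,
\[
\int_{\IId} |\partial_j \varphi_{\bk,\bs}(\bx)|^p \dd \bx = \frac{2^{k_j p - |\bk|_1 + p}}{(p+1)^{d-1}}.
\]

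Second, I would use that for $\bk \in \NN_0^d$ the supports $\supp(\varphi_{\bk,\bs})$ (and hence those of $\partial_j \varphi_{\bk,\bs}$) have pairwise disjoint interiors as $\bs$ ranges over $Z^d(\bk)$. Therefore the $p$-th power of the Sobolev seminorm splits as a sum:
\[
\int_{\IId} |\partial_j q_\bk(f)|^p \dd \bx = \sum_{\bs \in Z^d(\bk)} |\lambda_{\bk,\bs}(f)|^p \int_{\IId} |\partial_j \varphi_{\bk,\bs}|^p \dd \bx.
\]
Inserting the estimate $|\lambda_{\bk,\bs}(f)| \le 2^{-(\alpha+1)d} 2^{-\alpha|\bk|_1}$ from \eqref{eq:estimate} and using $|Z^d(\bk)| = 2^{|\bk|_1}$, the factor $2^{|\bk|_1}$ cancels the $2^{-|\bk|_1}$ from the integral, leaving
\[
\int_{\IId} |\partial_j q_\bk(f)|^p \dd \bx \le \frac{2^{-\alpha p|\bk|_1 + p - p(\alpha+1)d}}{(p+1)^{d-1}}\, 2^{k_j p}.
\]

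Finally, summing over $j \in [d]$ and taking the $p$-th root produces
\[
\|q_\bk(f)\|_{\mathring{W}_p^1} \le \frac{2^{-\alpha|\bk|_1 + 1}}{(p+1)^{(d-1)/p}\, 2^{(\alpha+1)d}} \Bigl(\sum_{j=1}^d 2^{k_j p}\Bigr)^{1/p} = \frac{2^{-\alpha|\bk|_1 + 1}}{(p+1)^{(d-1)/p}\, 2^{(\alpha+1)d}}\, |2^{\bk}|_p,
\]
which is exactly the claimed bound. The case $p = \infty$ follows from the same chain with $\esssup$ in place of integrals and the convention $(p+1)^{-1/p} \to 1$. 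No step is a real obstacle; the only point requiring care is the disjoint-support reduction, which is clean here because the hypothesis $\bk \in \NN_0^d$ excludes the $k_i = -1$ boundary layer in which the $\varphi_{-1,s}$ overlap; everything else is careful bookkeeping of powers of two and the constant $(p+1)^{-1/p}$ coming from $\int M_2^p$.
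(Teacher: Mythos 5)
Your proof is correct and follows essentially the same route as the paper's: both exploit the pairwise (essentially) disjoint supports of $\varphi_{\bk,\bs}$ for fixed $\bk\in\NN_0^d$ to split the $p$-th power of the seminorm into a sum over $\bs$, bound $|\lambda_{\bk,\bs}(f)|$ via \eqref{eq:estimate}, and compute the same univariate integrals $\int_0^1\varphi_{k,s}^p=\tfrac{2^{-k}}{p+1}$ and $\int_0^1|\varphi'_{k,s}|^p=2^{k(p-1)+p}$ before summing over the $d$ partial derivatives. The bookkeeping of powers of two and of the factor $(p+1)^{-(d-1)/p}$ matches the paper's computation exactly.
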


\begin{proof} 
	Let us prove the lemma for the case $1\leq p < \infty$. The case $p=\infty$ can be proven similarly with a slight modification.	
	For  $\bk \in \NNd_0$, by disjoint supports of $\varphi_{\bk,\bs}$, $\bs\in Z^d(\bk)$, we have 
	\begin{equation*} 
	\begin{aligned}
	\|q_\bk(f) \|_{{ \mathring{W}_p^1}}^p
	\, &= \, 
	\sum_{i=1}^d \int_{\IId}\Bigg| \sum_{\bs\in Z^d(\bk)} \, \lambda_{\bk,\bs}(f) \frac{\partial}{\partial x_i}\varphi_{\bk,\bs} (\bx) \Bigg|^p \, \dd \bx  
	\\ 
	\ &\leq \ 
	\sup_{\bs\in Z^d(\bk)} \, |\lambda_{\bk,\bs}(f)|^p\sum_{i=1}^d   |Z^d(\bk)|\Bigg(\prod_{j \not = i}2 \int_{0}^{2^{-k_j-1}}|2^{k_j+1}x_j|^p \, \dd x_j
	\Bigg)\Bigg(2 \int_{0}^{2^{-k_i-1}}2^{pk_i+p}\dd x_i \Bigg)\\
	\ &\le \ 
	\bigg(\frac{2^{- \alpha|\bk|_1}}{2^{ (\alpha + 1)d} \, }  \bigg)^p \sum_{i=1}^d 
	2^{|\bk|_1} \Bigg(\prod_{j \not = i}\frac{2}{p+1}  2^{-k_j-1} 
	\Bigg) 2\cdot 2^{(k_i+1)(p-1)}
	\\
	& = \bigg(\frac{2^{- \alpha|\bk|_1}}{2^{ (\alpha + 1)d} \, }  \bigg)^p \frac{1}{(p+1)^{d-1}} \sum_{i=1}^d 
	2^{p(k_i+1)}
	.
	\end{aligned}
	\end{equation*}
	This proves the claim. 
	\hfill \end{proof}

The following lemma  gives an upper estimate of the cardinality of  $D^d_\beta(m)$ and $G^d_\beta(m)$ showing their  sparsity.

\begin{lemma}\label{card}  Let $d\in \NN$. We have for every $\beta> 1$ and $m\in \N$,
	$$
	\big|D^d_\beta(m)\big|\leq   \frac{\beta}{\beta-1}d\Big(1-2^{-\frac{1}{\beta-1}}\Big)^{-d}2^m,
	$$
	and hence,
	$$
	\big|G^d_\beta(m)\big|\leq   \frac{\beta}{\beta-1}d2^d\Big(1-2^{-\frac{1}{\beta-1}}\Big)^{-d}2^m.
	$$
\end{lemma}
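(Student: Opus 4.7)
The plan is to bound $|D^d_\beta(m)|$ via a layered decomposition followed by a generating-function argument, and then deduce $|G^d_\beta(m)|$ as a direct corollary.

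First I would slice $\Delta^d_\beta(m)$ by the defining parameter: for each $j=0,\ldots,m$, set $\Delta^d_\beta(m,j):=\{\bk\in\NN^d_0:|\bk|_1=m-j,\,|\bk|_\infty\ge m-\lfloor\beta j\rfloor\}$, so that $\Delta^d_\beta(m)$ is the disjoint union of these slices. Since $|Z^d(\bk)|=2^{|\bk|_1}$, this gives
\[
|D^d_\beta(m)|\;=\;\sum_{j=0}^{m} 2^{m-j}\,|\Delta^d_\beta(m,j)|.
\]
Next, to count $|\Delta^d_\beta(m,j)|$, I union-bound over which index $i$ realizes $k_i\ge m-\lfloor\beta j\rfloor$. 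Writing $k_i=(m-\lfloor\beta j\rfloor)+t$ with $t\ge 0$ and letting the remaining $d-1$ coordinates sum to $\lfloor\beta j\rfloor-j-t$, a standard stars-and-bars count yields
\[
|\Delta^d_\beta(m,j)|\;\le\; d\binom{n_j+d-1}{d-1},\qquad n_j:=\lfloor\beta j\rfloor-j.
\]

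The heart of the proof is then to sum $\sum_{j\ge 0}2^{-j}\binom{n_j+d-1}{d-1}$. The key observation is: setting $q:=2^{-1/(\beta-1)}\in(0,1)$, one has $q^{(\beta-1)j}=2^{-j}$, and because $n_j\le(\beta-1)j$ while $q<1$, we get $2^{-j}\le q^{\,n_j}$. Thus
\[
\sum_{j=0}^{\infty} 2^{-j}\binom{n_j+d-1}{d-1}
\;\le\;
\sum_{j=0}^{\infty} q^{n_j}\binom{n_j+d-1}{d-1}
\;=\;
\sum_{n=0}^{\infty} \bigl|\{j:n_j=n\}\bigr|\;q^{n}\binom{n+d-1}{d-1}.
\]
The main obstacle is controlling the multiplicity $|\{j:n_j=n\}|$ when $\beta$ is close to $1$. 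I would resolve it by noting $n_j=n$ iff $n\le(\beta-1)j<n+1$, i.e. $j\in[\tfrac{n}{\beta-1},\tfrac{n+1}{\beta-1})$; this half-open interval has length $1/(\beta-1)$ and hence contains at most $\lfloor 1/(\beta-1)\rfloor+1\le\beta/(\beta-1)$ integers, \emph{uniformly in} $n$. Then the negative binomial identity $\sum_{n\ge 0}\binom{n+d-1}{d-1}q^{n}=(1-q)^{-d}$ gives
\[
\sum_{j=0}^{\infty} 2^{-j}\binom{n_j+d-1}{d-1}
\;\le\;\frac{\beta}{\beta-1}\bigl(1-2^{-1/(\beta-1)}\bigr)^{-d}.
\]

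Combining these estimates with the slice decomposition yields the claimed bound
\[
|D^d_\beta(m)|\;\le\; d\cdot 2^{m}\cdot\frac{\beta}{\beta-1}\bigl(1-2^{-1/(\beta-1)}\bigr)^{-d}.
\]
For the grid $G^d_\beta(m)$, since $|Z^d(\bk+\bone)|=2^{d+|\bk|_1}$, an immediate union bound gives $|G^d_\beta(m)|\le\sum_{\bk\in\Delta^d_\beta(m)}2^{d+|\bk|_1}=2^d\,|D^d_\beta(m)|$, which produces the second inequality. The only nontrivial step is the multiplicity count of the level sets $\{j:n_j=n\}$; everything else is union-bounds and the standard generating-function identity.
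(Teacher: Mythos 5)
Your proof is correct and follows essentially the same route as the paper's: the same slicing by $j$, the same union bound over the coordinate realizing $|\bk|_\infty$, the same hockey-stick count giving $d\binom{n_j+d-1}{d-1}$ (note $n_j=\lfloor\beta j\rfloor-j=\lfloor(\beta-1)j\rfloor$, the paper's index), the same multiplicity bound $\beta/(\beta-1)$ for the level sets, and the same negative binomial identity. The only cosmetic difference is that you bound $|G^d_\beta(m)|$ directly by $2^d|D^d_\beta(m)|$, whereas the paper passes through $|D^d_\beta(m+d)|$; both yield the stated estimate.
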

\begin{proof}  Since $\big| G^d_{\beta}(m,f)\big| \le \big|D^d_\beta(m+d)\big|$, it is sufficient to prove the first estimate in the lemma. We have
	\begin{equation*} 
	\begin{split}
	|D^d_\beta(m)| =\sum_{\bk\in  \Delta^d_\beta(m)} 2^{|\bk|_1}
	= 
	2^m \sum_{j=0}^{m} 2^{-j}\sum_{|\bk|_1=m-j\atop |\bk|_\infty\geq m-\lfloor \beta j \rfloor} 1.	
	\end{split}
	\end{equation*}
	Note, that for $i\in  \NN_0$ and $\ell \in [d]$
	there are 
	$
	\binom{d-2+ \lfloor (\beta -1)j\rfloor -i}{d-2}
	$
	multi-indices $\bk\in \NNd_0$ satisfying $$k_\ell=m-\lfloor\beta j\rfloor+i,\qquad \text{and}\qquad \sum_{r\not =\ell}k_r=\lfloor (\beta -1)j\rfloor -i .$$
	From this we can estimate
	\begin{equation*}
	\sum_{|\bk|_1=m-j\atop |\bk|_\infty\geq m-\lfloor\beta j\rfloor} 1 
	\leq d \sum_{i=0}^{\lfloor(\beta -1)j\rfloor} \binom{d-2+ \lfloor (\beta -1)j\rfloor -i}{d-2}
	= d \binom{d-1+\lfloor (\beta -1)j\rfloor}{d-1}.
	\end{equation*}
	Hence, 
	\begin{equation*} 
	\begin{split}
	|D^d_\beta(m)| 
	&\leq d 2^m \sum_{j=0}^{m} 2^{-j} \binom{d-1+\lfloor (\beta -1)j\rfloor}{d-1}.
	\end{split}
	\end{equation*}
	Putting $\lfloor (\beta -1)j\rfloor=k$ we obtain $j\geq \frac{k}{\beta-1}$ and 
	$$\big|\{j\in \N_0: \lfloor (\beta -1)j\rfloor=k \}\big|<\frac{1}{\beta-1}+1$$
	which leads to
	\begin{equation*} 
	|D^d_\beta(m)| 
	\leq d 2^m \Big(\frac{1}{\beta-1}+1\Big) \sum_{j=0}^{\infty} 2^{-\frac{1}{\beta -1}j} \binom{d-1+j}{d-1}
	= d2^m \frac{\beta}{\beta-1}  \big(1-2^{-\frac{1}{\beta-1}}\big)^{-d}.
	\end{equation*}
	The last equality is due to  
	\begin{equation}\label{eq:series}
	\sum_{j=0}^\infty x^j\binom{k+j}{k}=(1-x)^{-k-1}
	\end{equation} for $k\in \N_0$ and $x\in (0,1)$ which is obtained by taking $k$th derivative both sides of $(1-x)^{-1}=\sum_{j=0}^\infty x^j$. The proof is completed.
	\hfill \end{proof}

\begin{lemma}\label{lem:p-induction}
	Let $d\in \NN$, $\ell\in \NN_0$, and $1\leq p\leq \infty$. Then it holds
	\begin{equation} \label{eq:induction}
	\sum_{\bk \in \NNd_0, |\bk|_1=\ell } |2^{\bk}|_{p} \leq d2^{\ell+d-1}. 
	\end{equation}
\end{lemma}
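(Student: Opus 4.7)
My plan is to reduce the claim to the case $p=1$ and then evaluate the resulting sum by the standard generating-function identity \eqref{eq:series} already used in Lemma \ref{card}. The starting observation is that $\ell_p$-norms on $\RR^d$ are non-increasing in $p$, so for every $\bk\in\NN_0^d$ and every $p\in[1,\infty]$,
\[
|2^{\bk}|_p\ \le\ |2^{\bk}|_1\ =\ \sum_{i=1}^d 2^{k_i}.
\]
Hence it suffices to establish \eqref{eq:induction} when the left-hand side is replaced by $\sum_{|\bk|_1=\ell}\sum_{i=1}^d 2^{k_i}$, which handles all $p\in[1,\infty]$ uniformly.

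Next I would exploit the permutation symmetry of the set $\{\bk\in\NN_0^d:|\bk|_1=\ell\}$ under the symmetric group on the coordinates, which gives
\[
\sum_{|\bk|_1=\ell}\sum_{i=1}^d 2^{k_i}\ =\ d\sum_{|\bk|_1=\ell} 2^{k_1}.
\]
For $d\ge 2$, grouping the inner sum by the value of $k_1$ and using that the number of $(k_2,\ldots,k_d)\in\NN_0^{d-1}$ with $k_2+\cdots+k_d=\ell-k_1$ equals $\binom{\ell-k_1+d-2}{d-2}$, this equals $\sum_{k_1=0}^\ell 2^{k_1}\binom{\ell-k_1+d-2}{d-2}$. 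Substituting $j=\ell-k_1$ turns this into $2^\ell\sum_{j=0}^\ell 2^{-j}\binom{j+d-2}{d-2}$.

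Finally, I would apply \eqref{eq:series} with $x=1/2$ and $k=d-2$, extending the finite sum to an infinite one, to get
\[
2^\ell\sum_{j=0}^\ell 2^{-j}\binom{j+d-2}{d-2}\ \le\ 2^\ell\,(1-\tfrac{1}{2})^{-(d-1)}\ =\ 2^{\ell+d-1},
\]
so multiplying by $d$ yields the claimed bound. The case $d=1$ is immediate since the sum contains only $\bk=(\ell)$ and both sides equal $2^\ell$. There is no real technical obstacle here; the only conceptual point is recognising that monotonicity in $p$ lets one bypass $p$ entirely and reduces the problem to a one-line combinatorial identity.
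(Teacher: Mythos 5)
Your proof is correct, but it takes a different route from the paper's. The paper reduces to $p=1$ by monotonicity exactly as you do, but then proceeds by induction on $d$: it splits the sum over $\bk\in\NN_0^{d+1}$ with $|\bk|_1=\ell$ according to the value $j$ of the last coordinate, applies the inductive hypothesis to the inner $d$-dimensional sums, and cleans up the remainder with the generating-function identity \eqref{eq:series}. You instead avoid induction entirely: permutation symmetry of $\{\bk:|\bk|_1=\ell\}$ collapses $\sum_{|\bk|_1=\ell}\sum_i 2^{k_i}$ to $d\sum_{|\bk|_1=\ell}2^{k_1}$, a stars-and-bars count gives $\sum_{k_1=0}^{\ell}2^{k_1}\binom{\ell-k_1+d-2}{d-2}$, and a single application of \eqref{eq:series} with $x=1/2$ finishes the bound. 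Both arguments hinge on the same identity \eqref{eq:series}, and your constants check out (including equality in the case $d=1$). Your version is shorter and arguably more transparent, since it makes visible exactly where the factor $d$ (symmetry) and the factor $2^{d-1}$ (the geometric-binomial sum) come from; the paper's induction buys nothing extra here beyond matching the inductive style used elsewhere in the appendix.
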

\begin{proof} By monotonicity  in $p$ of $\ell_p$-norms, it is enough to prove the lemma for $p=1$. We use induction argument with respect to $d$. It is obvious that the inequality holds for $d=1$ and $\ell\in \NN_0$ or $d\in \NN$ and $\ell=0$. Assume that 
	$$
	\sum_{\bk \in \NNd_0, |\bk|_1=j }    \sum_{i=1}^d 2^{k_i} \leq d 2^{j+d-1}
	$$
	for $j=0,\ldots,\ell$. We show that the inequality \eqref{eq:induction} holds for $d+1$ instead of $d$. Indeed, we have
	\begin{align*}
	\sum_{\bk \in \NN_0^{d+1}, |\bk|_1=\ell }    \sum_{i=1}^{d+1} 2^{k_i}
	& = \sum_{j=0}^\ell \sum_{k_1+\ldots+k_d=\ell-j} \bigg( 2^{j}+\sum_{i=1}^d 2^{k_i} \bigg)
	\\
	& \leq  \sum_{j=0}^\ell d2^{\ell-j+d-1} + \sum_{j=0}^\ell 2^{j}\binom{\ell-j+d-1}{d-1}
	\\
	&
	= d2^{d-1}(2^{\ell+1}-1) + 2^{\ell} \sum_{j=0}^\ell 2^{j-\ell} \binom{\ell-j+d-1}{d-1}.
	\end{align*}
	Using \eqref{eq:series} we finally obtain
	\begin{align*}
	\sum_{\bk \in \NN_0^{d+1}, |\bk|_1=\ell }    \sum_{i=1}^{d+1} 2^{k_i}
	& \leq d2^{d-1}(2^{\ell+1}-1) + 2^{\ell+d}
	\leq (d+1)2^{\ell+d}.
	\end{align*}
	The proof is completed.
	\hfill \end{proof}

\subsection{Proof of Theorem \ref{thm:approx}} \label{proof of thm}
\begin{proof}  Let us prove the theorem for the case $1\leq p < \infty$. The case $p = \infty$ can be proven similarly with a slight modification. {As noticed, the set} 
	$\Delta^d_\beta(m)$ with $\beta> 1$ is a subset of $\Delta^d(m)$. Hence, for  every $f \in \Uas$ we have 
	\begin{equation}  \label{eq:f-r}
	\begin{aligned}
	\|f -R_{\beta}(m,f) \|_{{ \mathring{W}_p^1}}
	&\le  \sum_{\bk\in \NNd_0 \backslash \Delta^d_\beta(m)} \|q_\bk(f) \|_{{ \mathring{W}_p^1}} 
	\\
	&= \sum_{\bk\in \NNd_0 \backslash \Delta^d(m)} \|q_\bk(f) \|_{{ \mathring{W}_p^1}}  + \sum_{\bk\in  \Delta^d(m) \backslash \Delta^d_\beta(m)} \|q_\bk(f) \|_{{\mathring{W}_p^1}}.
	\end{aligned}
	\end{equation}
	For the first sum in the right side, from Lemma \ref{lem:qk-in-Wp} we have 
	\begin{equation*}   
	\begin{aligned} 
	\sum_{\bk\in \NNd_0 \backslash \Delta^d(m)} \|q_\bk(f) \|_{{ \mathring{W}_p^1}} 
	&\le  \frac{1}{(p+1)^{\frac{d-1}{p}}2^{(\alpha + 1)d} } 
	\sum_{\bk \in \NNd_0, |\bk|_1>m} 2^{- \alpha|\bk|_1+1} |2^{\bk}|_{p}
	\\
	&= \frac{2}{(p+1)^{\frac{d-1}{p}}2^{(\alpha + 1)d} }  
	\sum_{  \ell =m+1}^\infty 2^{- \alpha \ell} \sum_{\bk \in \NNd_0, |\bk|_1=\ell }  |2^{\bk}|_{p}
	.
	\end{aligned}
	\end{equation*}
	In view of Lemma \ref{lem:p-induction} we get
	\begin{equation}  \label{L&Wnorms}
	\begin{aligned} 
	\sum_{\bk\in \NNd_0 \backslash \Delta^d(m)} \|q_\bk(f) \|_{{ \mathring{W}_p^1}}
	&\le  
	\frac{d}{(p+1)^{\frac{d-1}{p}}2^{\alpha d} }   
	\sum_{  \ell =m+1}^\infty 2^{- (\alpha-1) \ell}  
	= \frac{d2^{-(\alpha - 1)m}}{(p+1)^{\frac{d-1}{p}}2^{\alpha d}(2^{\alpha-1}-1)}.
	\end{aligned}
	\end{equation}
	We now consider the second sum in the right  side of \eqref{eq:f-r}. Denote {by} $j^*$ the maximum value of $j$ such that the set
	\begin{equation*}
	\big\{\bk\in \NNd_0:\ |\bk|_1=m-j,\ |\bk|_\infty <m-\lfloor\beta j\rfloor\big\}
	\end{equation*}
	is not empty. Following the argument in the proof of \cite[Theorem 3.10]{BuGr04}  and using  Lemma \ref{lem:qk-in-Wp} we get
	\begin{equation}  \label{eq:diff}
	\begin{aligned}
	\sum_{\bk \in  \Delta^d(m) \backslash\Delta^d_\beta(m)}\|q_\bk(f) \|_{{\mathring{W}_p^1}} 
	&
	\le  
	\sum_{j=0}^{j^*} \sum_{|\bk|_1=m -j\atop|\bk|_\infty <m -\lfloor\beta j\rfloor}\|q_\bk(f) \|_{{\mathring{W}_p^1}}
	\\
	&
	\leq \frac{2} {  (p+1)^{\frac{d-1}{p}}2^{(\alpha + 1)d} } \sum_{j=0}^{j^*} \sum_{|\bk|_1=m -j\atop|\bk|_\infty <m -\lfloor\beta j\rfloor} 
	2^{- \alpha|\bk|_1} |2^{\bk}|_{p}
	\\
	&= \frac{2^{-\alpha m+1}} { (p+1)^{\frac{d-1}{p}}2^{(\alpha + 1)d}} \sum_{j=0}^{j^*} 2^{\alpha j}\sum_{|\bk|_1=m -j\atop|\bk|_\infty <m -\lfloor\beta j\rfloor} 
	|2^{\bk}|_{p}.
	\end{aligned}
	\end{equation}
	{The last sum in the right side of \eqref{eq:diff}} can be estimated as 
	\begin{equation*}  
	\begin{aligned}
	\sum_{j=0}^{j^*} 2^{\alpha j}\sum_{|\bk|_1=m -j\atop|\bk|_\infty <m -\lfloor\beta j\rfloor} 
	|2^{\bk}|_{p}
	& \leq \sum_{j=0}^{j^*} 2^{\alpha j}\sum_{i=1}^{m-1-\lfloor\beta j\rfloor} { d} \binom{m+d-2-i-j}{d-2} 2^i
	\\
	& ={ d} 2^{m-1 }\sum_{j=0}^{j^*}  2^{-\lfloor(\beta-\alpha) j\rfloor}\sum_{\ell=0}^{m-2-\lfloor\beta j\rfloor}  \binom{d-1+\lfloor (\beta -1)j\rfloor+\ell}{d-2} 2^{-\ell},
	\end{aligned}
	\end{equation*}
	where in the equality we put $i=m-1-\lfloor\beta j\rfloor-\ell $. As in the proof of Lemma \ref{card}, we put $\lfloor (\beta -1)j\rfloor=k$ and get $\lfloor(\beta -\alpha) j\rfloor \geq \frac{k(\beta-\alpha)}{\beta-1}-1$. Since $\frac{\beta-\alpha}{\beta-1}<1$, we have 
	\begin{equation*}
	\begin{split}
	\sum_{j=0}^{j^*} 2^{\alpha j}\sum_{|\bk|_1=m -j\atop|\bk|_\infty <m -\lfloor\beta j\rfloor} 
	|2^{\bk}|_{p}
	&\leq \frac{d2^m\beta}{\beta-1} \sum_{k=0}^{\infty}  
	\sum_{\ell=0}^{\infty}  \binom{d-1+k+\ell}{d-2} 2^{-\ell}2^{-\frac{\beta-\alpha}{\beta-1}k}
	\\
	& \leq \frac{d2^m\beta}{\beta-1} \sum_{k=0}^{\infty}  
	\sum_{\ell=0}^{\infty}  \binom{d-1+k+\ell}{d-2}  2^{-\frac{\beta-\alpha}{\beta-1}(k+\ell)}
	\\
	& =\frac{d2^m\beta}{\beta-1}\sum_{j=0}^{\infty}  \binom{d-1+ j}{d-2}  2^{-\frac{\beta-\alpha}{\beta-1}j}(j+1)
	\\
	& =(d-1)\frac{d2^m\beta}{\beta-1} \sum_{j=0}^{\infty}  \binom{d-1 +j}{d-1}  2^{-\frac{\beta-\alpha}{\beta-1}j} .
	\end{split} 
	\end{equation*}
	From the assumption $\beta>\alpha>1$ and  \eqref{eq:series} we arrive at
	\[
	\sum_{j=0}^{j^*} 2^{\alpha j}\sum_{|\bk|_1=m -j\atop|\bk|_\infty <m -\lfloor\beta j\rfloor} 
	|2^{\bk}|_{p}\leq 
	\frac{d^2 2^m \beta}{\beta-1}\big(1-  2^{-\frac{\beta-\alpha}{\beta-1}}\big)^{-d}.
	\]
	Inserting this into \eqref{eq:diff} we  get
	\begin{equation*}  
	\begin{aligned}
	\sum_{\bk \in  \Delta^d(m) \backslash\Delta^d_\beta(m)}\|q_\bk(f) \|_{{\mathring{W}_p^1}}
	&\leq \frac{2\beta}{\beta-1}\frac{{ d^2}2^{-m(\alpha-1)}} { (p+1)^{\frac{d-1}{p}}2^{(\alpha + 1)d}\big(1-  2^{-\frac{\beta-\alpha}{\beta-1}}\big)^{d}}   .
	\end{aligned}
	\end{equation*}
	Since 
	$  2^{-d}\big(1-  2^{-\frac{\beta-\alpha}{\beta-1}}\big)^{-d}>1$
{for}  $\alpha>1$, we finally obtain the desired estimate.
	\hfill \end{proof}

\bigskip
\noindent
{\bf Acknowledgments.}  This work is funded by Vietnam National Foundation for Science and Technology Development (NAFOSTED) under  Grant No. 102.01-2020.03. A part of this work was done when  the authors were working at the Vietnam Institute for Advanced Study in Mathematics (VIASM). They would like to thank  the VIASM  for providing a fruitful research environment and working condition.

\bibliographystyle{abbrv}

\bibliography{AllBib}

\end{document}